\makeatletter\@addtoreset{equation}{section} \makeatother
\newtheorem{theorem}[equation]{Theorem}
\newtheorem{proposition}[equation]{Proposition}
\newtheorem{lemma}[equation]{Lemma}
\newtheorem{corollary}[equation]{Corollary}
\theoremstyle{definition}
\newtheorem{example}[equation]{Example}
\newtheorem{definition}[equation]{Definition}
\theoremstyle{remark}
\newtheorem{remark}[equation]{Remark}
\newcommand{\ang}[1]{\langle #1 \rangle}
\newcommand{\ov}[1]{\overline{#1}}
\renewcommand{\tilde}[1]{\widetilde{#1}}
\DeclareMathAlphabet{\mathbbold}{U}{bbold}{m}{n}
\def \k {\mathbbold{k}}
\def \Xk {X_{\ov{\k}}}
\def \O {\mathcal{O}}
\def \H {\mathcal{M}}
\def \PP {\mathcal{P}}
\def\D {\mathcal{D}}
\def\X {\mathcal{X}}
\def \P {\mathbb{P}}
\def \Q {\mathbb{Q}}
\def \C {\mathbb{C}}
\def \Z {\mathbb{Z}}
\def \ge {\geqslant}
\def \le {\leqslant}
\def \kappa {\varkappa}
\def \numeq {\equiv}
\def \iso {\simeq}
\def \supp {\mathrm{supp}\,}
\def \mult {\mathrm{mult}}
\def \Bs {\mathrm{Bs}\,}
\def \NS {\mathrm{NS}_{\Q}}
\def \Bir {\mathrm{Bir}}
\def \Aut {\mathrm{Aut}}
\def \Pic {\mathrm{Pic}\,}
\def \Sing {\mathrm{Sing}\,}
\def \ord {\mathrm{ord}}
\def \id {\mathrm{id}}
\def \cchar {\mathrm{char}}
\def \t {~---\ }
\def\nlb {\nolinebreak}
\newcommand{\tit}{BIRATIONAL AUTOMORPHISMS OF NODAL QUARTIC 
THREEFOLDS}
\author{Constantin Shramov}
\title{\tit}
\address{University of Edinburgh, Kings Buildings, Mayfield Road,
Edinburgh EH9 3JZ, UK}
\thanks{The work was partially supported by RFFI grants
No.~05-01-00353 and No.~08-01-00395-a and grant N.Sh.-1987.2008.1}
\email{shramov@mccme.ru}
\begin{document}

\maketitle

\begin{abstract}
It is well-known that a nonsingular minimal cubic surface is 
birationally rigid; the 
group of its birational selfmaps is generated by biregular selfmaps
and birational involutions such that all relations between the latter are 
implied by standard relations between reflections on an elliptic curve. It is
also known that a factorial nodal quartic threefold is birationally rigid and
its group of birational selfmaps is generated by biregular ones and certain 
birational involutions. We prove that all relations between these involutions 
are implied by standard relations on elliptic curves, complete the proof of 
birational rigidity over a non-closed field and describe the situations when 
some of the birational involutions in question become regular (and, in 
particular, complete the proof of the initial theorem on birational rigidity,
since some details were not established in the original paper of M.\,Mella).
\end{abstract}

\section{Introduction}

One of the popular problems of birational geometry is
to find all Mori fibrations 
birational to a given Mori fibration
$\X\to\nlb T$, and to compute the group
of birational automorphisms $\Bir(\X)$ of a variety~$\X$.
The cases when there are few structures of Mori fibrations on $\X$
are of special interest,
for example, when there is only one such structure 
up to a natural equivalence: such
varieties are called \emph{birationally rigid} 
(see section~\ref{section:preliminaries} for a definition). 

The first example of a birationally rigid variety is a minimal cubic surface.
Recall that an Eckardt point on a cubic surface $S$ defined over a field $\k$
is a point contained in three lines lying on $S_{\ov{\k}}$.

\begin{theorem}[{see~\cite[Chapter~V, Theorems~1.5 and~1.6]{Manin}}]
\label{theorem:Manin}
Let $S$ be a nonsingular minimal cubic
surface over a perfect field $\k$. Then 

1. $S$ is birationally rigid;

2. $\Bir(S)$ is generated by its subgroup $\Aut(S)$, birational involutions 
$t_P$ centered in non-Eckardt
points (Geiser involutions) and birational involutions $t_{PQ}$ centered 
in pairs of conjugate points 
such that the corresponding line does not intersect any line contained 
in $S_{\ov{\k}}$ (Bertini involutions);

3. all relations between these generators are implied by the following:
\begin{gather*}
t_P^2=t_{PQ}^2=\id,\\
wt_Pw^{-1}=t_{w(P)} \mbox{\ for\ } w\in\Aut(S), \\
wt_{PQ}w^{-1}=t_{w(P)w(Q)} \mbox{\ for\ } w\in\Aut(S),\\
(t_{P_1}\circ t_{P_2}\circ t_{P_3})^2=\id 
\mbox{\ for collinear points\ } P_1, P_2, P_3.
\end{gather*}
\end{theorem}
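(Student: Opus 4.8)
The plan is to establish parts~1 and~2 simultaneously by the Noether--Fano--Iskovskikh method together with a degree-lowering induction, and then to read off the presentation in part~3 from the resulting factorisation. First I would fix a birational map $\chi\colon S\dashrightarrow Y$ onto a Mori fibre space $Y$ and set $\mathcal{H}=\chi^{-1}_{*}\mathcal{A}$, the strict transform of a very ample linear system on $Y$. Because $S$ is minimal, the $G$-invariant part of $\Pic(S_{\ov{\k}})$, with $G=\mathrm{Gal}(\ov{\k}/\k)$, has rank one, so $\mathcal{H}\sim -nK_S$ for a well-defined $n\in\Q_{>0}$ ranging in a discrete set. The Noether--Fano--Iskovskikh inequality then asserts that, unless $\chi$ is already an isomorphism of Mori fibre spaces, the log pair $(S,\tfrac1n\mathcal{H})$ fails to be canonical; hence there is a $G$-invariant set of base points of $\mathcal{H}$ with $\mult\,\mathcal{H}>n$. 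Locating these \emph{maximal centres} is the first real task.

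The intersection estimate controls them: two general members of $\mathcal{H}$ meet in $\mathcal{H}^2=n^2K_S^2=3n^2$, so every base point has multiplicity at most $\sqrt{3}\,n$, and a $G$-orbit of base points each of multiplicity exceeding $n$ cannot consist of three points, for that would give $\mathcal{H}^2\ge 3m^2>3n^2=\mathcal{H}^2$. Thus the maximal centre is either a single $\k$-point $P$ or a conjugate pair $\{P,Q\}$ of $S$ (the same inequalities also exclude genuinely infinitely near maximal singularities); the hypotheses that $P$ be non-Eckardt, respectively that the line $PQ$ meet no line of $S_{\ov{\k}}$, are exactly what make the blow-up a (weak) del Pezzo surface of degree $2$, respectively $1$, carrying the classical Geiser, respectively Bertini, involution $\gamma$. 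I would then untwist by replacing $\chi$ with $\chi\circ t_P$ (or $\chi\circ t_{PQ}$), where $t_P=\pi\circ\gamma\circ\pi^{-1}$ for the blow-down $\pi$. A short computation in $\Pic$, using $\gamma^{*}K=K$ and $\gamma^{*}=-\id$ on $K^{\perp}$ (so that $\gamma^{*}e=-K-e$ for the exceptional class $e$), yields the new degree $n'=\tfrac13\mult_P\mathcal{H}$, and $\mult_P\mathcal{H}\le\sqrt3\,n<3n$ forces $n'<n$; the Bertini case is analogous. Since $n$ lies in a discrete set, the induction terminates: eventually $(S,\tfrac1n\mathcal{H})$ is canonical, the residual map is an isomorphism of Mori fibre spaces, hence an element of $\Aut(S)$, and $Y\iso S$. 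This gives rigidity (part~1) and the factorisation $\chi=w\circ t_{\bullet}\circ\cdots\circ t_{\bullet}$ (part~2).

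For part~3 the relations $t_P^2=t_{PQ}^2=\id$ and the conjugation rules $w t_P w^{-1}=t_{w(P)}$, $w t_{PQ}w^{-1}=t_{w(P)w(Q)}$ are formal, since $\gamma$ is an involution and $\pi\circ\gamma\circ\pi^{-1}$ transforms naturally under $\Aut(S)$. The collinearity relation is the substantive one: for $P_1,P_2,P_3$ collinear I would restrict to a smooth member $E$ of the pencil of plane cubics cut on $S$ by the planes through the line $P_1P_2P_3$, on which the three points satisfy $P_1+P_2+P_3=0$, and show that each $t_{P_i}$ acts on $E$ as a reflection $x\mapsto 2P_i-x$ of its group. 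Since the composite of three reflections of an abelian group is again a reflection, its square is the identity, which is the standard elliptic-curve relation giving $(t_{P_1}t_{P_2}t_{P_3})^2=\id$. To prove these relations \emph{complete}, I would map the abstractly presented group onto $\Bir(S)/\Aut(S)$ by $\tau_{P}\mapsto t_P$ and establish injectivity from the factorisation of parts~1--2.

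The hard part is precisely this last completeness statement. It demands a genuine calculus of untwisting factorisations \t a confluence analysis showing that any two chains of involutions representing the same birational map differ only by moves that commute involutions with disjoint centres, conjugate by automorphisms, or resolve a triple of simultaneously available collinear centres, the final move being exactly the collinearity relation; the elliptic-curve reflection identity is indispensable here. The subtleties I would watch are the Galois descent for the Bertini involutions over the non-closed perfect field $\k$, which must be carried out $G$-equivariantly, and the degenerate configurations (Eckardt points, points on lines, a line $PQ$ meeting a line of $S_{\ov{\k}}$) where the putative involution either degenerates or becomes biregular \t the very cases whose careful analysis, as the abstract indicates, is the gap to be completed in the three-fold setting.
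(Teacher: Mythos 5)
First, a point of order: the paper does not prove Theorem~\ref{theorem:Manin} at all; it is quoted from \cite{Manin} (Chapter~V, Theorems~1.5 and~1.6) as the model which the quartic results imitate, so your attempt can only be measured against the classical argument and against the paper's parallel treatment of the quartic (Sections~\ref{section:generators}--\ref{section:max-centers}). Your skeleton for parts~1--2 \t the N\"oether--Fano inequality, the bound $\sum m_i^2\le\mathcal{H}^2=3n^2$ killing Galois orbits of length ${}\ge 3$, untwisting by Geiser and Bertini involutions, induction on the discrete degree \t is the correct one. The decisive gap is part~3: what you call a ``confluence analysis'' is not an argument, it is a restatement of the theorem. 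The concrete missing ingredient is a classification of configurations of \emph{simultaneous} maximal centres \t the cubic-surface analogue of the paper's Proposition~\ref{proposition:2-centers} \t showing that two centres can coexist only when they lie among the intersection points of a line with $S$, so that the only ambiguity in the degree-lowering process is the one resolved by $(t_{P_1}t_{P_2}t_{P_3})^2=\id$; this must then be fed into the word-reduction argument of \cite[Chapter~V, \S 7.8]{Manin}, exactly the reduction the paper invokes to pass from Proposition~\ref{proposition:2-centers} to Theorem~\ref{theorem:relations}. Your factorisation from parts~1--2 gives only surjectivity of the map from the abstractly presented group; injectivity \t the whole of part~3 \t does not ``follow from the factorisation'' and is left unproven.

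Two further gaps sit inside parts~1--2. (a) The untwisting formula $n'=\frac13\mult_P\mathcal{H}$ is false: combined with your own bound $\mult_P\mathcal{H}\le\sqrt3\,n$ it gives $n'\le n/\sqrt3<n$ for \emph{every} composition with $t_P$, so composing twice would decrease the degree twice, contradicting $t_P^2=\id$. The correct computation from $\gamma^*e=-K-e$ gives $n'=2n-\mult_P\mathcal{H}$ in the Geiser case and $n'=5n-4m$ in the Bertini case ($m$ the common multiplicity at the conjugate pair); the conclusion $n'<n$ for a maximal centre survives, but the arithmetic must be redone. (b) You never rule out ``bad'' centres. If a maximal centre were an Eckardt point, or a pair whose line meets a line of $S_{\ov{\k}}$, the corresponding involution is \emph{biregular} (as the paper recalls immediately after the theorem), so composing with it does not lower the degree and your induction stalls. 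One needs the exclusion argument: a maximal centre $P$ cannot lie on a line $L\subset S_{\ov{\k}}$, since $L$ is not a fixed component of the mobile system and $n=\mathcal{H}\cdot L\ge\mult_P\mathcal{H}>n$; an Eckardt point lies on three lines, and a bad pair is excluded by the same count applied to the residual conic cut out by the plane spanned by $\ang{P,Q}$ and the line it meets. This is the surface counterpart of what the paper itself must add to \cite{Mella} in Remark~\ref{remark:why-Mella-not-enough} and Lemmas~\ref{lemma:2-points-on-Eckardt-line}--\ref{lemma:2-points-collinear-to-Eckardt-point}. (A last, harmless slip: on a plane cubic section the Geiser involution acts as $x\mapsto -P_i-x$, i.e.\ the reflection $R_{-P_i/2}$ in the notation of Section~\ref{section:notation}, not $x\mapsto 2P_i-x$; both are reflections, so your verification of the collinearity relation itself is unaffected.)
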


Fano threefolds of low degree give examples of birationally rigid
varieties
with relatively simple groups of birational selfmaps.
Birational superrigidity (see section~\ref{section:preliminaries} for a 
definition) of a smooth quartic was proved
in~\cite{IskovskikhManin};
a proof of birational superrigidity of a smooth double cover
of $\P^3$ branched over a sextic and birational rigidity of a
smooth double cover of a quadric branched over a quartic section
(together with the calculation of its group of birational automorphisms)
can be found in~\cite{Iskovskikh-rigid} and in~\cite{IskovskikhPukhlikov}.

The same questions may be posed (and sometimes solved) for varieties with
mild singularities (for example, some nodal
varieties, see \cite{Pukhlikov-quartic}, 
\cite{CheltsovPark}, \cite{Grinenko} and~\cite{Mella}).

\begin{theorem}[{see~\cite[Theorem~2 or Theorem~7]{Mella}}]
\label{theorem:Mella-rigid}
Let $X$ be a factorial nodal\footnote{
See section~\ref{section:notation} for definitions.}
quartic threefold. Then 

1. $X$ is
birationally rigid, 

2. $\Bir(X)$ is generated by its subgroup $\Aut(X)$, 
birational involutions~$\tau_P$ centered
in singular points $P\in\Sing X$, and birational involutions~$\tau_L$
centered in lines\footnote{
A description of these will follow in section~\ref{section:generators}.}
$L$ containing one or two singular points of~$X$.
\end{theorem}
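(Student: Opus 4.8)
The plan is to prove both assertions simultaneously by the Noether--Fano--Iskovskikh method of maximal singularities, using the factoriality of $X$ in an essential way. Since $X$ is a factorial nodal quartic, $\Pic(X)=\Z\cdot H$ with $-K_X=H$ and $H^3=4$, so $X$ is a Mori fibre space over a point. Given a birational map $\chi\colon X\dashrightarrow Y$ to a Mori fibre space $Y$ that is not an isomorphism of such, I would take the movable linear system $\mathcal{M}=\chi^{-1}_*|-n'K_Y|$ on $X$, write $\mathcal{M}\subset|nH|$, and invoke the Noether--Fano inequality: the pair $(X,\tfrac{1}{n}\mathcal{M})$ is then not canonical, so there is a geometric valuation whose center $Z\subset X$ is a maximal singularity. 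The whole argument reduces to classifying the admissible centers $Z$ and showing that each one is untwisted by one of the involutions $\tau_P$ or $\tau_L$.

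First I would exclude smooth points. If $Z$ is a smooth point, the $4n^2$-inequality of Pukhlikov and Corti gives $\mult_Z(\mathcal{M}^2)>4n^2$; but $\mathcal{M}^2$ is an effective one-cycle with $\mathcal{M}^2\cdot H=n^2H^3=4n^2$, and such a cycle cannot have multiplicity exceeding its degree at a smooth point \t a contradiction. Next I would treat curves. If $Z$ is a curve with $\mult_Z\mathcal{M}>n$, then $\mathcal{M}^2\cdot H\ge(\mult_Z\mathcal{M})^2\deg Z>n^2\deg Z$, forcing $\deg Z\le 3$; a finer intersection-theoretic analysis, together with factoriality, should rule out all curves except lines meeting $\Sing X$, and should also exclude lines disjoint from $\Sing X$. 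The two remaining families of center are then exactly the nodes $P\in\Sing X$ and the lines $L$ through one or two nodes.

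These are precisely the centers where the involutions enter. Projection of $X$ from a node $P$ exhibits $X$ as a double cover (away from a surface) of $\P^3$, whose Galois involution is $\tau_P$; projection from a line $L$ through one or two nodes endows $X$ with a fibration over $\P^2$ whose associated involution is $\tau_L$, the explicit descriptions being deferred to Section~\ref{section:generators}. The key computation is to verify that whenever $P$ (resp.\ $L$) is a maximal center, the corresponding involution strictly decreases the degree $n$ of $\mathcal{M}$. Since $n$ is a positive integer, iterating this untwisting terminates; after finitely many steps $\chi$ is turned into a map with no maximal singularity, and the Noether--Fano inequality then forces the residual map to be an isomorphism of Mori fibre spaces. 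As $X$ has Picard rank one and is Fano, the target must be $X$ itself and the residual map must lie in $\Aut(X)$, giving birational rigidity (part~1) and simultaneously exhibiting every element of $\Bir(X)$ as a composition of elements of $\Aut(X)$ with the $\tau_P$ and $\tau_L$ (part~2).

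The main obstacle \t and presumably where Mella's original argument needed completion \t is the local analysis at and near the nodes. At a smooth point the blow-up discrepancies and the $4n^2$-inequality are classical, but a node is a quotient singularity whose weighted blow-ups carry different discrepancies, so both the precise form of the Noether--Fano inequality there and the verification that $\tau_P,\tau_L$ untwist a maximal center require a careful analysis of the infinitely near structure of the singularity. In particular, ruling out curves of degree $2$ and $3$ and handling lines through \emph{two} nodes, where the fibration degenerates, is the delicate part; this is exactly where factoriality must be used, to control the class group and to guarantee that the untwisting involutions are globally defined.
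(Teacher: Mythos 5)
Your proposal follows the same Noether--Fano/untwisting strategy as Mella's original paper, which is also what this paper relies on: note that the paper does not reprove Theorem~\ref{theorem:Mella-rigid} from scratch, but quotes Mella's classification of non-canonical centers (Theorem~\ref{theorem:Mella}) and supplies only one missing step. That missing step is precisely where your proposal has a genuine gap. You claim that whenever $P$ (resp.\ $L$) is a maximal center, the involution $\tau_P$ (resp.\ $\tau_L$) strictly decreases the degree $n$ of $\H$. This fails when $P$ is an Eckardt point or $L$ is an Eckardt line: in those cases the involution is \emph{biregular} on $X$ (Lemma~\ref{lemma:point-regularisation}, Corollary~\ref{corollary:line-regularisation}), so composing with it leaves the degree unchanged and untwists nothing. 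Your descending induction on $n$ therefore terminates only if one proves separately that Eckardt points and Eckardt lines never occur as non-canonical centers, and this requires an argument you do not give: for an Eckardt point $P$, non-canonicity forces $\mult_P\H>n$ by Theorem~\ref{theorem:sing-mult}, whence each of the infinitely many lines of the cone through $P$ would lie in $\Bs\H$, contradicting mobility of $\H$; for an Eckardt line one argues with residual conics or with the infinitely many lines meeting $L$ (Remark~\ref{remark:why-Mella-not-enough}). Omitting this check is exactly the gap in Mella's paper that Remark~\ref{remark:Mella-not-enough} identifies and that the present paper was partly written to close; your reconstruction reproduces that gap.

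Two further points. First, the exclusion of all curves other than lines through nodes is deferred in your text to ``a finer intersection-theoretic analysis ... should rule out''; this is the technical core of Mella's Theorem~17 (Theorem~\ref{theorem:Mella} here) and cannot simply be asserted --- one must dispose of conics, plane cubics, lines disjoint from $\Sing X$, and lines through three collinear nodes (the last via a tangent-plane argument, cf.\ Lemma~\ref{lemma:tangent-plane} and Remark~\ref{remark:no-line-3-points}). Second, a threefold node is not a quotient singularity: it is a Gorenstein terminal hypersurface singularity, and the correct local ingredient at a node is not a weighted-blow-up discrepancy computation but the statement that a pair which is non-canonical at an ordinary double point has multiplicity greater than $1$ there (Theorem~\ref{theorem:sing-mult}); this is what replaces the smooth-point $4n^2$-inequality in the nodal setting.
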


\begin{remark}
Note that conditions of Theorem~\ref{theorem:Mella-rigid} are indeed necessary.
If one allows more complicated singularities, the statement may fail to hold: 
for example, a general quartic hypersurface with a single singularity 
analytically isomorphic to a hypersurface singularity $xy+z^3+t^3=0$ is
factorial but not birationally rigid (see~\cite{CortiMella}). On the 
other hand, if one releases the factoriality assumption, $X$ may even be
rational, like a general determinantal quartic (see~\cite{Mella}).
In general factoriality is a global property that depends on the configuration
of singular points on~$X$, but there are sufficient conditions for $X$ to be
factorial depending only on the number of singular points 
(see~\cite[Theorems~1.2 and~1.3]{Cheltsov-quartic}, 
\cite[Theorem~1.3]{Shramov}). For a treatment of geometry 
of non-factorial nodal quartics see~\cite{Kaloghiros} 
(and also~\cite{Cheltsov-quartic} and~\cite{CheltsovGrinenko}).
\end{remark}

Recall that involutions $t_P\in\Bir(S)$ (resp., $t_{PQ}\in\Bir(S)$)
are also defined for ``bad'' points (resp., pairs of points), i.\,e. Eckardt
points $P$ (resp., pairs $\{P, Q\}$ such that the corresponding
line intersects some line contained in $S_{\ov{\k}}$)\t
but such involutions are regular on $S$.

Motivated by the analogy with a cubic surface, we give the following 
definitions for a (nodal factorial) quartic threefold $X$ defined over 
a field $\k$.

\begin{definition}[{cf., for example,~\cite[8.8.3]{Manin}
and~\cite[Definition~2.3]{CheltsovPark-Eckardt}}]
\label{definition:Eckardt-point}
Let $P$ be a singular point on $X$.
We call $P$ \emph{an Eckardt point}
if $P$ is a vertex of some (two-dimensional) cone contained in $\Xk$.
\end{definition}

\begin{definition}
Let $L\subset X$ be a line. We call $L$ \emph{an Eckardt line}
if there are infinitely many lines intersecting $L$ on $\Xk$.
\end{definition}

We prove the following result that describes regularizations on a quartic
threefold.

\begin{proposition}\label{proposition:regularisation}
Let $X$ be a factorial nodal quartic threefold. Then an
involution $\tau_P$ is regular on $X$ if and only if $P$ is 
an Eckardt point, and an involution $\tau_L$
is regular on $X$ if and only if $L$ is an Eckardt line.
\end{proposition}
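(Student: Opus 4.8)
The plan is to prove both equivalences by analyzing the construction of the involutions $\tau_P$ and $\tau_L$ explicitly, since an involution fails to be regular precisely when its defining linear system has base points or when the associated double cover degenerates. The involution $\tau_P$ centered in a singular point $P$ is built from the projection from $P$: projecting $X$ from $P$ to $\mathbb{P}^2$ exhibits $X$ birationally as a conic bundle (or, after resolving, relates $X$ to a double cover), and $\tau_P$ is the involution interchanging the two preimages of a general point. First I would recall this construction in detail (deferring to the promised discussion in section~\ref{section:generators}) and identify the indeterminacy locus of $\tau_P$ with the locus where the two branches of the projection fibre collide or degenerate.

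For the direction asserting regularity, I would show that when $P$ is an Eckardt point—the vertex of a two-dimensional cone $C\subset\Xk$—the cone structure forces the degree-two construction to split globally, so that the interchange of sheets is induced by a linear (biregular) transformation of the ambient $\P^4$ preserving $X$. Concretely, the presence of the cone means the quartic equation, written in coordinates adapted to $P$ and the cone, becomes a quadratic in the remaining variables with coefficients that are squares up to the cone's structure, so the Geiser-type reflection is realized by negating a coordinate. For the converse I would argue contrapositively: if $P$ is not an Eckardt point, I would exhibit an explicit curve (the strict transform of the exceptional divisor, or the ramification locus of the projection) on which $\tau_P$ is not defined, using factoriality to guarantee that this locus is genuinely a curve rather than being contracted, so that $\tau_P$ has nontrivial indeterminacy and cannot be regular. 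The analysis of $\tau_L$ runs parallel: $\tau_L$ arises from the projection from the line $L$ to $\P^2$, which makes $X$ birational to a conic bundle whose generic fibre is the residual conic; $L$ being an Eckardt line (infinitely many lines meeting $L$) is exactly the condition under which this projection degenerates into a split family, making $\tau_L$ biregular.

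The key steps, in order, are: (i) recall precisely the birational model realizing each involution as a deck transformation of a degree-two or conic-bundle structure; (ii) characterize the base locus of each involution in terms of the ramification/discriminant of that structure; (iii) show the Eckardt condition is equivalent to this base locus being empty, treating $\tau_P$ and $\tau_L$ separately; and (iv) verify that in the Eckardt case the deck transformation descends to a biregular automorphism, invoking factoriality to control the relevant divisor classes and to ensure no small contractions interfere.

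The hard part will be step (iii) for the line case: translating the geometric condition ``infinitely many lines meet $L$ on $\Xk$'' into an algebraic condition on the discriminant of the conic bundle obtained by projecting from $L$. I expect this to require a careful dimension count on the Fano scheme of lines and an understanding of how the singular points lying on $L$ contribute to the degeneration, together with a verification that the factoriality hypothesis excludes the pathological configurations (such as a plane contained in $X$) that would otherwise decouple the Eckardt condition from regularity. Establishing that an Eckardt line yields a genuinely \emph{biregular} extension, rather than merely a small modification, is the most delicate point and is where the factoriality of $X$ must be used essentially.
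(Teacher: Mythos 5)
Your proposal misidentifies the basic geometric structures on which both involutions are built, and this is fatal to the line case. Projection from a singular point $P$ maps $X$ two-to-one onto $\P^3$, not $\P^2$; more seriously, projection from a line $L$ does \emph{not} make $X$ a conic bundle: a plane section of a quartic has degree four, so the curve residual to $L$ is a plane \emph{cubic}, and the resulting structure is an elliptic fibration over $\P^2$. The involution $\tau_L$ is a fiberwise reflection with respect to the group law on these cubics, defined using the section(s) of a regularization supplied by the node(s) lying on $L$; it is not the deck transformation of any degree-two or conic-bundle structure. Your declared ``hard part'' --- translating the Eckardt-line condition into a condition on ``the discriminant of the conic bundle'' --- therefore analyzes a model that does not exist. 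The paper's actual arguments for the line case are of a completely different nature: for a non-Eckardt line, one computes the action of $\tau_L$ on degrees on a suitable blow-up (e.g.\ $\mu\mapsto 11\mu-10\nu_L$ when $L$ contains one node) and notes that a regular automorphism must preserve the ample generator of $\Pic(X)\iso\Z$ (Lefschetz plus factoriality), so $\tau_L$ cannot be regular; for an Eckardt line, regularity is proved \emph{indirectly} via the N\"oether--Fano inequality: if $\tau_L$ were not regular, $L$ would be a non-canonical center of the mobile system $(\tau_L)_*^{-1}|\O(1)|$, i.e.\ $\mult_L\H>\mu$, which forces the infinitely many lines meeting $L$ into $\Bs\H$, contradicting mobility (with a separate, parallel treatment when the infinitely many lines all pass through a singular Eckardt point of $L$). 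Your ``base locus of the defining linear system is empty'' criterion supplies no mechanism for either of these steps.

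For the point case your outline is closer to the truth but asserts exactly the step that carries the mathematical content. Writing $X$ as $w^2q_2+wq_3+q_4=0$ with $P=(0:0:0:0:1)$, the Galois involution is $w\mapsto -w-q_3/q_2$, which is regular precisely when $q_2$ divides $q_3$ (one then completes the square and gets $w\mapsto -w$). The Eckardt hypothesis says only that $\{q_2=q_3=q_4=0\}$ contains a curve $F$; deducing $q_2\mid q_3$ from this is where factoriality enters, and it is not automatic: by the Lefschetz theorem the cone $K$ over $F$ with vertex $P$, being a Weil divisor on $X$, is cut out by a hypersurface, so $4\mid\deg F$; since $\deg F\le 6$ this forces $\deg F=4$, and then $K$ would be a hyperplane section, making the $(2,2)$-curve $F$ planar --- a contradiction. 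Your phrase ``coefficients that are squares up to the cone's structure'' passes over this entire argument, which is the only place the Eckardt condition and factoriality actually interact in the point case. (Your converse direction for points --- exhibiting indeterminacy when $P$ is not Eckardt --- is essentially the paper's argument, which shows the divisor swept out by the fourth intersection points of the contact-three lines at $P$ is contracted to $P$; but note the indeterminacy sits at $P$ itself, not along the ramification locus, which is fixed pointwise by $\tau_P$ and is where the map is perfectly well defined, and no factoriality is needed there.)
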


\begin{remark}\label{remark:Mella-not-enough}
Actually, Theorem~\ref{theorem:Mella-rigid} is not exactly what
is proved in~\cite{Mella}. To derive Theorem~\ref{theorem:Mella-rigid}
from the results of~\cite{Mella} 
one needs to prove that Eckardt points and Eckardt lines
cannot be non-canonical centers on $X$ 
(see Remark~\ref{remark:why-Mella-not-enough}). Still, this is not hard to 
do; it is done in Remark~\ref{remark:why-Mella-not-enough}.
\end{remark}

As in Theorem~\ref{theorem:Manin}, 
one can observe that the involutions $\tau_P$ and $\tau_L$ may not be 
independent in $\Bir(X)$ because of relations arising from
standard ones for reflections on elliptic curves
(see Examples~\ref{example:3-points-relation} 
and~\ref{example:line-2-points-relation}).

The main goal of this paper 
is to prove the following result, which may be considered 
a generalization of the third part of Theorem~\ref{theorem:Manin}.

\begin{theorem}\label{theorem:relations}
In the setting of Theorem~\ref{theorem:Mella-rigid} 
all relations between the generators of $\Bir(X)$
are implied by the following ones:
\begin{gather*}
\tau_P^2=\tau_L^2=\id,\\
w\tau_Pw^{-1}=\tau_{w(P)} \mbox{\ for\ } w\in\Aut(S),\\ 
w\tau_Lw^{-1}=\tau_{w(L)} \mbox{\ for\ } w\in\Aut(S),\\
(\tau_{P_1}\tau_{P_2}\tau_{P_3})^2=\id 
\mbox{\ for collinear points\ } P_1, P_2, P_3,\\
(\tau_{P_1}\circ\tau_{P_2}\circ\tau_L)^2=\id \mbox{\ for\ } P_1, P_2\in L.
\end{gather*}
\end{theorem}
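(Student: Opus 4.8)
The plan is to follow the strategy established by Manin for cubic surfaces, adapting it to the threefold situation. The key structural tool is the theory of birationally rigid varieties: because $X$ is birationally rigid by Theorem~\ref{theorem:Mella-rigid}, any birational selfmap $\chi\in\Bir(X)$ that is not biregular must factor through a maximal singularity, and the Noether--Fano--Iskovskikh method identifies this singularity as a singular point $P$ or a line $L$ of the type occurring in the generators. The first step is therefore to set up a \emph{resolution algorithm}: given an arbitrary word in the generators representing the identity, I would untwist it one involution at a time, at each stage strictly decreasing a suitable invariant (the degree, or equivalently the threshold $c(\chi)$ measuring the worst singularity of the linear system $\chi^{-1}|-K_X|$). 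The point is that each time an involution $\tau_P$ or $\tau_L$ is applied to reduce the degree, the center of the maximal singularity is \emph{uniquely determined} by $\chi$, so the sequence of untwisting moves is essentially forced and the relations among the generators are controlled by the geometry of these centers.

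The second step is to isolate the local model at each maximal center, which is where the elliptic curves enter. Fixing a singular point $P$, the involution $\tau_P$ is built from the double projection from $P$, and the fibers of the associated conic bundle (or, after restricting to an appropriate pencil of surfaces through $P$) carry the structure of an elliptic curve on which $\tau_P$ acts as a reflection. Two involutions $\tau_{P_1},\tau_{P_2}$ whose centers are collinear (lying on a common line $L\subset X$) act on the \emph{same} elliptic curve \t the normalization of the relevant component \t and the standard relation $(\tau_{P_1}\tau_{P_2}\tau_{P_3})^2=\id$ among three reflections sharing a collinear triple, together with the mixed relation $(\tau_{P_1}\tau_{P_2}\tau_L)^2=\id$, is exactly the reflection identity on that curve. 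The main task here is to verify carefully that the local geometry of $X$ near a line containing one or two nodes produces precisely the elliptic-curve configuration predicted by the analogy with the cubic surface; this requires computing the relevant pencil explicitly and checking the genus and the action of the involutions on it.

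The heart of the argument \t and the step I expect to be the main obstacle \t is to prove that the listed relations are \emph{complete}, i.e.\ that there are no further, hidden relations. The strategy is a standard but delicate double-coset / normal-form argument: one shows that every element of $\Bir(X)$ has a canonical expression obtained by the resolution algorithm above, and that two words represent the same element if and only if one is transformed into the other using only the five families of relations in the statement. Uniqueness of the maximal center at each untwisting step guarantees that the only ambiguities in the normal form arise (a) when the center is fixed by some $w\in\Aut(X)$, giving the conjugation relations $w\tau_Pw^{-1}=\tau_{w(P)}$ and $w\tau_Lw^{-1}=\tau_{w(L)}$; and (b) when several centers lie on a common line and can be untwisted in different orders, giving the two braid-type relations on the elliptic curve. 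The difficulty is entirely in controlling the interaction of these two sources of ambiguity simultaneously and in ruling out degenerate configurations (for instance coincidences forced by Eckardt points or Eckardt lines, where by Proposition~\ref{proposition:regularisation} the relevant involution is already in $\Aut(X)$ and must be treated separately). Once the normal form is shown to be well-defined modulo exactly these relations, the theorem follows.
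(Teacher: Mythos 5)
Your overall framework (untwist one maximal center at a time, decrease the degree, and argue that the only ambiguities in the resulting normal form come from the listed relations) is exactly the standard reduction that the paper also invokes, citing \cite[Chapter~V, \S 7]{Manin} and \cite[3.2.4]{IskovskikhPukhlikov} and omitting the details. But your proposal rests on the claim that ``the center of the maximal singularity is uniquely determined by $\chi$'', with ambiguities only of the types (a) and (b) you list~--- and this is precisely the statement that must be proved; as written, your third step is circular, since ``the only ambiguities are (a) and (b)'' is a reformulation of that uniqueness claim, not a consequence of it. In fact uniqueness genuinely fails: a mobile linear system on $X$ can have \emph{two} simultaneous non-canonical centers (two nodes joined by a line lying in $X$, or a node and a line through it containing exactly one more node), and these coincidences are exactly what produce the relations $(\tau_{P_1}\tau_{P_2}\tau_{P_3})^2=\id$ and $(\tau_{P_1}\tau_{P_2}\tau_L)^2=\id$. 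What completeness of the relations actually requires is a \emph{classification} of all configurations of simultaneous non-canonical centers: this is Proposition~\ref{proposition:2-centers} of the paper, whose proof is the technical heart of the matter and for which your proposal offers no substitute. One must rule out two points whose joining line is not contained in $X$ (Lemma~\ref{lemma:2-points}), three points (Corollary~\ref{corollary:3-points}), two lines (Corollary~\ref{corollary:2-lines}), a line and a point outside it (Corollary~\ref{corollary:line-and-point}), as well as Eckardt degenerations (Lemmas~\ref{lemma:2-points-on-Eckardt-line} and~\ref{lemma:2-points-collinear-to-Eckardt-point}).

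The paper's tool for these exclusions is Lemma~\ref{lemma:negative-definite}: one restricts the mobile system to a minimal resolution $\tilde{H}$ of a general hyperplane section $H$ through the plane spanned by the would-be centers, and derives a contradiction from negative (semi-)definiteness of the intersection form on the configuration of $(-2)$-curves (exceptional curves and line components whose dual graphs are subgraphs of affine Dynkin diagrams on the K3 surface $\tilde{H}$); this in turn needs the Du Val analysis of general hyperplane sections in Lemmas~\ref{lemma:general-section} and~\ref{lemma:general-section-mult-line}. Beyond this, to run the untwisting algorithm you also need the explicit transformation formulas for degrees and multiplicities (Lemmas~\ref{lemma:line-2-points-action}, \ref{lemma:point-action-2-points} and~\ref{lemma:point-action-3-points}), which both certify that each untwisting strictly decreases $\mu$ and, in the two-center configurations, show that the two possible untwisting orders differ exactly by a listed relation; the Eckardt lemmas above guarantee that those formulas are applicable. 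A minor point: $\tau_P$ is the Galois involution of the double cover $X\dasharrow\P^3$ given by projection from $P$ (not a conic-bundle or double-projection construction); the elliptic curves on which collinear involutions interact come from projection from the line $L$, as in Examples~\ref{example:3-points-relation} and~\ref{example:line-2-points-relation}.
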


Note that one of possible generalizations of a quartic threefold is a 
Fano threefold
hypersurface of index $1$ with terminal singularities
in a weighted projective space. There are $95$ families of such
hypersurfaces. Their birational rigidity is known under some generality
assumptions (see~\cite[Theorem~1.3]{CPR}), as well as the fact that 
the groups of their birational automorphisms is generated by involutions
centered in points and lines (also known as Geiser and 
Bertini involutions or quadratic and elliptic involutions, 
see~\cite[Remark~1.4]{CPR}). The relations between these generators are 
also known and are analogous to those listed in Theorem~\ref{theorem:relations}
(see~\cite[Theorem~1.1]{CheltsovPark-weighted}).
Note that we establish the same results for a quartic without
any generality assumptions.

The paper is organized as follows. In section~\ref{section:notation}
we recall some standard definitions and fix notations that we are going to use 
throughout the paper. In section~\ref{section:preliminaries}
we recall standard definitions and constructions related to the method 
of maximal singularities.  
Section~\ref{section:auxiliary} contains some auxiliary results.
Section~\ref{section:generators} contains explicit 
description of the involutions $\tau_P$ and $\tau_L$ and obvious relations 
between them, and section~\ref{section:action} gathers information about the
action of these involutions.
Section~\ref{section:regularisation} contains a proof of
Proposition~\ref{proposition:regularisation} and a small improvement of 
the proof of Theorem~\ref{theorem:Mella-rigid} 
(see Remark~\ref{remark:why-Mella-not-enough}).
In section~\ref{section:max-centers} we prove 
Proposition~\ref{proposition:2-centers}, which is a technical counterpart 
of Theorem~\ref{theorem:relations}; actually, the method 
that reduces Theorem~\ref{theorem:relations} to 
Proposition~\ref{proposition:2-centers} is standard
(see~\cite[Chapter~V, \S 7.8]{Manin} 
or~\cite[3.2.4]{IskovskikhPukhlikov}), so we omit this step. 
Finally, section~\ref{section:non-closed-fields} contains an improvement of 
the proof of~\cite[Theorem~5]{Mella} (which states that 
Theorem~\ref{theorem:Mella-rigid} holds over algebraically non-closed 
fields as well).

I am grateful to I.\,Cheltsov for numerous explanations, to 
S.\,Galkin, Yu.\,G.\,Prokhorov and A.\,Kuznetsov for useful discussions and to 
I.\,Karzhemanov, J.\,Park, V.\,Przhijalkowsky, D.\,Ryder and D.\,Stepanov 
for remarks.
Part of this work was completed when I was staying in 
Max-Plank-Institute f\"ur Mathematik in October--November 2007. 
I am grateful to the staff of MPIM for their hospitality.

\section{Notation and conventions}
\label{section:notation}
All varieties throughout the paper are assumed to be defined over 
the field of complex numbers $\C$, except in 
section~\ref{section:non-closed-fields} where everything is defined over 
an arbitrary field $\k$ of characteristic $\cchar(\k)=0$.
On the other hand, 
all the results stated over $\C$ hold over $\k$ as well if the obvious 
changes are made to their statements.\footnote{
These are easy but not completely automatic. For example, 
in Remark~\ref{remark:no-line-3-points} 
the points $P_1$, $P_2$ and $P_3$ are 
not necessarily defined over $\k$ and one should assume only that they are 
contained in a line $L\subset\Xk$; in Lemma~\ref{lemma:2-points}
the line $L$ is not necessarily defined over $\k$ etc.}

Let $Y$ be a (projective, irreducible and normal) $n$-dimensional variety. 
A singular point $y\in Y$ is called \emph{an ordinary
double point} (or \emph{a node}) if its neighborhood is analytically 
isomorphic to a neighborhood of a vertex of a cone over a nonsingular
quadric of dimension $n-1$. 
If $Y$ is a hypersurface in $\P^{n+1}$ given by an equation $f=0$ in an affine 
neighborhood of $y$ then this 
property is equivalent to non-degeneracy of the Hessian matrix $H(f)$ at $y$.   
A variety that has only nodes as singularities is called \emph{nodal}.

A variety $Y$ is called \emph{factorial} if any Weil divisor on $Y$ is  
Cartier, and \emph{$\Q$-factorial} if an appropriate multiple 
of any Weil divisor is Cartier. 
Factorial varieties enjoy some properties typical for non-singular ones, 
for example, the Lefschetz theorem (see Lemma~\ref{lemma:Lefschetz}).
Note that for nodal varieties 
being factorial is equivalent to being $\Q$-\nlb factorial.
In the sequel by ``divisor'' we usually mean ``$\Q$-divisor''.

We use the following standard notation throughout the paper. 
If $D$ is a divisor and $\D$ is a linear system on $Y$, then $\supp D$ 
denotes the support of $D$, and $\Bs\D$ the base locus of $\D$. 
If $Z$ is a cycle,
$\mult_Z D$ denotes the multiplicity of $D$ at $Z$.
In fact we will use this notion only for the cases when $Z$ is either an
ordinary double point or a cycle not contained in the singular locus 
$\Sing Y$ of $Y$; under these assumptions $\mult_Z D$ may be defined using
the equation
$$\pi^*D=\pi^{-1}D+(\mult_Z D)E,$$
where $\pi:\tilde{Y}\to Y$ is the blow-up of $Z$ and $E$ is the (unique) 
exceptional divisor. The multiplicity $\mult_Z \D$ is defined as that of a 
general divisor $D\in\D$. 

The symbol ${}\numeq{}$ denotes numerical equivalence (of 
Cartier or $\Q$-\nlb Cartier divisors). If $S$ is a surface, we write 
$\NS^1(S)$ for the $\Q$-vector space generated by the Cartier divisors on 
$S$ modulo numerical equivalence; this space is endowed with a 
bilinear symmetric intersection form. 

If $C\subset\P^2$ is a (nonsingular) cubic curve, a 
\emph{group law on $C$} means
a standard group law on the elliptic curve with an inflection point of $C$
(any of these) as a zero element. Given such a curve $C$ and a point $P\in C$,
\emph{reflection with respect to $P$} means a reflection $R_P:C\to C$ 
with respect to the group law (i.\,e. a map $x\mapsto 2P-x$; recall 
that~$R_P$ depends only on the class of $P$ modulo $2$-torsion and does not 
depend on the choice of a zero element).
Since a projection from $P$ defines a double cover of $\P^1$, one can also 
associate to $P$ \emph{a Galois involution} $\tau_P$, i.\,e. the natural 
involution of this double cover; note that $\tau_P=R_{-\frac{P}{2}}$.

If $Y_1, \ldots, Y_k$ are subsets of $\P^n$, we denote by 
$\ang{Y_1, \ldots, Y_k}$ the linear span of $Y_1\cup\ldots\cup Y_k$.

We will reserve the symbol $X$ to denote a three-dimensional factorial
nodal quartic hypersurface throughout the paper. 

\section{Preliminaries on the method of maximal singularities}
\label{section:preliminaries}

We briefly recall the main constructions of the method of maximal
singularities and introduce the necessary notation and terminology
(see~\cite{Pukhlikov-essentials} or~\cite{Corti} for details).
The basic notions and facts concerning the Minimal Model Program and in 
particular necessary classes of singularities can be found 
in~\cite{Kollar} or~\cite{Matsuki}.

Let $V$ be a (three-dimensional) $\Q$-factorial Fano variety with terminal 
singularities and Picard number $\rho(V)=1$ (one could assume instead 
that $V$ is a Mori fibration over an arbitrary base $S$, 
but we do not need this level of generality). 
The variety $V$ is called \emph{birationally rigid} if 
for any birational 
map $\chi:V\dasharrow V'$ to a Mori fibration $V'\to S'$ 
the variety $V'$ is isomorphic to $V$ (and so $\chi$ is a birational
selfmap of $V$), 
and \emph{birationally superrigid} if it is birationally rigid and 
$\Bir(X)=\Aut(X)$ (see~\cite{Pukhlikov-essentials} or~\cite{Corti} 
for the definitions in the general case).

Let $V'\to S'$ be a Mori fibration. Assume that there is a
birational map $\chi:V\dasharrow V'$. There is an algorithm to obtain a
decomposition of $\chi$ into elementary maps (links) of four types, known
as the Sarkisov program (see, for example,~\cite{Corti}
or~\cite{Matsuki}). Choose a very ample divisor $M'$ on $V'$ and let
$\H=\chi_*^{-1}|M'|$ (note that $\H$ is mobile, i.\,e. 
has no base components, but in 
general has base points and is not complete). Let $\mu$ be
a rational number such that $\H\subset |-\mu K_V|$ (we will refer to $\mu$
as the \emph{degree} of the linear system $\H$). 
The N\"oether--Fano inequality
(see~\cite{Iskovskikh-rigid}, \cite{Corti}, \cite{Matsuki}
or~\cite{Pukhlikov-essentials})
implies that if $\chi$ is not an isomorphism then the pair $(V,
\frac{1}{\mu}\H)$ is not canonical. One can show that there is an extremal
contraction (in the sense of a usual Minimal Model Program)
$g:\tilde{V}\to V$, such that the discrepancy of the exceptional divisor
of $g$ with respect to the pair $(V, \frac{1}{\mu}\H)$ is negative. 
Furthermore, there exists a link $\chi_1$ of type
II or III (a definition can be found, for example, in~\cite{Corti} 
or~\cite{Matsuki})  starting with this contraction and decreasing an
appropriately 
defined ``degree'' of the map $\chi$ (i.\,e. the ``degree'' of 
$\chi\circ\chi_1^{-1}$ is less then that of $\chi$).
The only fact about this ``degree'' that we will 
use is the following: it decreases if the degree $\mu$ of the linear system  
$\H$ does (see~\cite{Corti} or~\cite{Matsuki} for details).

The previous statements imply the following: to prove that
$V$ cannot be transformed to another Mori fibration (i.\,e. is birationally
rigid) it suffices to check that there are no non-canonical 
centers\footnote{
To be more accurate, one should speak about non-canonical centers
\emph{with respect to $\frac{1}{\mu}\H$}. But we will  
avoid mentioning $\H$ 
since in all arguments that we use the linear system is fixed.}
on $V$ except those associated 
with links that give rise to birational automorphisms of $V$,
and to describe all birational selfmaps $\chi:V\dasharrow V$
it is sufficient to classify all non-canonical centers and to find 
an ``untwisting'' selfmap for each of them (i.\,e. a selfmap 
$\chi_Z$ such that the degree $\mu$ of $\H$ decreases after one applies 
$\chi_Z$, provided that $Z$ was a non-canonical center).

\section{Auxiliary statements}
\label{section:auxiliary}

We will refer to the following lemma as the Lefschetz theorem, since it is a 
straightforward analogue for factorial Fano varieties.

\begin{lemma}\label{lemma:Lefschetz}
Let $Y\subset\P^n$, $n\ge 4$, be a factorial hypersurface. Then any (effective) 
Weil divisor $D\subset Y$ is cut out by a hypersurface $\tilde{D}\subset\P^n$.
In particular, $\deg D$ is divisible by $\deg Y$.
\end{lemma}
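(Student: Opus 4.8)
The plan is to reduce everything to the single assertion that the class group of $Y$ is generated by the hyperplane class $\O_Y(1)$, and then to produce the cutting hypersurface by a standard cohomology argument. First I would record that factoriality means precisely that every Weil divisor is Cartier, so the class group $\mathrm{Cl}(Y)$ coincides with $\Pic(Y)$; moreover a factorial variety is normal, hence $\Sing Y$ has codimension $\ge 2$ and restriction to the smooth locus $U=Y\setminus\Sing Y$ identifies $\mathrm{Cl}(Y)$ with $\Pic(U)$. Granting the key claim that $\Pic(Y)=\Z\cdot\O_Y(1)$, an effective Weil divisor $D$ satisfies $\O_Y(D)\iso\O_Y(m)$ for some integer $m$, necessarily $m\ge 0$ since $D$ is effective and $\O_Y(1)$ is ample.

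To realize $D$ as a hypersurface section I would use the restriction sequence $0\to\O_{\P^n}(m-\deg Y)\to\O_{\P^n}(m)\to\O_Y(m)\to 0$. Since $H^1(\P^n,\O_{\P^n}(m-\deg Y))=0$ for every $m$, the restriction map $H^0(\P^n,\O_{\P^n}(m))\to H^0(Y,\O_Y(m))$ is surjective, so the section of $\O_Y(m)$ cutting out $D$ extends to a degree-$m$ form on $\P^n$; its zero locus is the desired hypersurface $\tilde{D}$ with $\tilde{D}\cap Y=D$. The divisibility of $\deg D$ by $\deg Y$ is then immediate, as $\deg D=m\cdot\deg Y$.

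The heart of the argument, and the main obstacle, is the claim $\Pic(Y)=\Z\cdot\O_Y(1)$. Here I would invoke the Grothendieck--Lefschetz theorem (SGA~2), which for an ample divisor $Y$ in the smooth ambient space $\P^n$ with $\dim Y\ge 3$ compares $\Pic(\P^n)=\Z$ with the Picard group of the formal completion of $\P^n$ along $Y$, and thereby controls $\Pic(Y)$. The delicate point is that $Y$ is allowed to be singular: at a node the local class group is nontrivial (isomorphic to $\Z$), so a priori $\mathrm{Cl}(Y)$ could be strictly larger than $\Z\cdot\O_Y(1)$. What rescues the argument is exactly the factoriality hypothesis, which forces $\mathrm{Cl}(Y)=\Pic(Y)$ and rules out the extra Weil classes supported at the singular points; combined with Grothendieck--Lefschetz this pins down $\Pic(Y)$ to be infinite cyclic on the hyperplane class. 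An alternative, more elementary route in our characteristic-$0$ setting would be to work on the smooth locus $U$ and apply the Lefschetz hyperplane theorem directly, but the formal-function version is the cleanest, and verifying that factoriality exactly cancels the local contributions at the singularities is the step I expect to require the most care.
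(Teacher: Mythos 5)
Your proof is correct, but it reaches the key claim $\Pic(Y)=\Z\cdot\O_Y(1)$ by a genuinely different route than the paper. The paper argues transcendentally: a standard argument (citing Danilov) shows that the natural map $H^2(\P^n,\Z)\to H^2(Y,\Z)$ is an isomorphism even for singular $Y$, and since $H^1(Y,\O_Y)=H^2(Y,\O_Y)=0$ for any hypersurface in $\P^n$ with $n\ge 4$, the exponential sequence identifies $\Pic(Y)$ with $H^2(Y,\Z)\iso\Z$; factoriality then finishes the proof exactly as in your last step, except that the paper compresses the production of $\tilde{D}$ into ``the statement follows'', whereas you spell out the restriction sequence $0\to\O_{\P^n}(m-\deg Y)\to\O_{\P^n}(m)\to\O_Y(m)\to 0$ and the vanishing of $H^1$ on $\P^n$ --- a worthwhile addition. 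Your route through the Grothendieck--Lefschetz theorem of SGA~2 is instead purely algebraic: for a hypersurface (indeed any complete intersection) of dimension $\ge 3$ in $\P^n$, the comparison through the formal completion gives $\Pic(Y)=\Z\cdot\O_Y(1)$ with no smoothness hypothesis, the real inputs being the vanishings $H^i(\P^n,\O_{\P^n}(-k\deg Y))=0$ and $H^i(Y,\O_Y(-k\deg Y))=0$ for $i=1,2$ and $k\ge 1$, which follow from the same kind of restriction-sequence computation you use at the end. What your approach buys: it works over an arbitrary field in any characteristic, which is relevant for section~\ref{section:non-closed-fields} of the paper, where the exponential-sequence argument would only apply after an appeal to the Lefschetz principle; what the paper's approach buys is brevity over $\C$, since the topological Lefschetz theorem plus the Hodge-theoretic vanishing is a one-line citation. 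One small clarification to your write-up: Grothendieck--Lefschetz pins down $\Pic(Y)$ by itself, with no appeal to factoriality; factoriality enters only afterwards, to identify $\mathrm{Cl}(Y)$ with $\Pic(Y)$ (and thereby exclude extra Weil classes, which do occur for non-factorial nodal quartics such as determinantal ones). Your phrasing ``combined with Grothendieck--Lefschetz this pins down $\Pic(Y)$'' slightly blurs this division of labor, but the logic you run is sound.
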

\begin{proof}
A standard argument (see, for example,~\cite[Theorem~7.7]{Danilov}) shows
that a natural map $H^2(\P^n, \Z)\to H^2(Y, \Z)$ is an isomorphism.
On the other hand, since $H^1(Y, \O_Y)=H^2(Y, \O_Y)=0$ 
for any hypersurface in $\P^n$ with $n\ge 4$, one has $\Pic(Y)=H^2(Y, \Z)$.
Since $Y$ is factorial, any Weil divisor $D$ is Cartier, and the statement 
follows.
\end{proof}

The following results will be used in section~\ref{section:max-centers}.

\begin{theorem}[{see~\cite[Theorem~1.7.20]{Cheltsov-survey}}]
\label{theorem:sing-mult}
Let $V$ be a variety of dimension $\dim V\ge 3$, $x\in V$ an ordinary
double point and $D$ an effective divisor such that the pair $(V, D)$
is not canonical at $x$. Then $\mult_x D>1$.
\end{theorem}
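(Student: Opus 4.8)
The plan is to work analytically-locally at the node and to study the single blow-up $\pi\colon\tilde{V}\to V$ of the point $x$, reducing the failure of canonicity to a log canonical threshold estimate on the exceptional divisor. Since $x$ is an ordinary double point, the exceptional divisor $E$ is a smooth $(n-1)$-dimensional quadric, and its discrepancy is $a(E;V,0)=n-2$, which is \emph{positive} precisely because $n\ge 3$. Writing $\pi^*D=\tilde{D}+(\mult_x D)E$ as in the conventions of Section~\ref{section:notation}, one finds $K_{\tilde{V}}+\tilde{D}=\pi^*(K_V+D)+(n-2-\mult_x D)E$, so the discrepancy of $(V,D)$ along $E$ equals $n-2-\mult_x D$. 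If this number is negative then $\mult_x D>n-2\ge 1$ and we are done; hence I may assume $\mult_x D\le n-2$, and in that case the failure of canonicity must be witnessed by a divisorial valuation $\nu$ whose center $Z$ is contained in $E$.

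The next step pushes the problem onto $E$. Because $Z\subseteq E$ one has $\nu(E)\ge 1$, and since the coefficient of $E$ in the crepant pull-back $\tilde{D}+(\mult_x D-n+2)E$ is nonpositive, $a(\nu;\tilde{V},\tilde{D})\le a(\nu;V,D)<0$. Therefore $a(\nu;\tilde{V},E+\tilde{D})=a(\nu;\tilde{V},\tilde{D})-\nu(E)<-1$, so the pair $(\tilde{V},E+\tilde{D})$ is \emph{not} log canonical along $Z$. As $\tilde{V}$ is smooth near $E$ and $E$ is a smooth divisor, inversion of adjunction then shows that $(E,\tilde{D}|_E)$ is not log canonical at some point $y\in Z$. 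Finally, $\pi^*D$ restricts numerically to zero on the contracted divisor $E$ and $E|_E\numeq -H$ for the hyperplane class $H$ of the quadric $E$, whence $\tilde{D}|_E\numeq(\mult_x D)\,H$.

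It now suffices to prove a purely local statement on quadrics: if $B$ is an effective divisor with $B\numeq mH$ on a smooth quadric and $(Q,B)$ is not log canonical at a point $y$, then $m>1$; applied to $B=\tilde{D}|_E$ this gives $\mult_x D>1$. I would first intersect $E$ with general hyperplanes through $y$, which preserves both the class $mH$ and the failure of log canonicity at $y$, reducing to the case where $E$ is a quadric surface. There I intersect $B$ with a ruling line $\ell$ through $y$: as long as $\ell\not\subseteq\supp B$ one has $m=B\cdot\ell\ge\mult_y B$, and since at a non–log-canonical point of a smooth surface one always has $\mult_y B>1$, this yields $m>1$.

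The one delicate configuration, which I expect to be the main obstacle, is when both rulings $\ell_1,\ell_2$ through $y$ lie in $\supp B$, so that no ruling gives a transverse intersection. Here I would write $B=c_1\ell_1+c_2\ell_2+B'$; intersecting with the opposite ruling gives $c_i\le B\cdot\ell_{3-i}=m$, so the case $\max(c_1,c_2)>1$ is immediate. In the remaining case $c_1,c_2\le 1$ I restrict to $\ell_1$ (enlarging the coefficient of $\ell_1$ to $1$ only enlarges the boundary): since $B'\cdot\ell_1=m-c_2$, the coefficient of $y$ in the restriction of $c_2\ell_2+B'$ to $\ell_1$ is at most $c_2+(m-c_2)=m$, so if $m\le 1$ this restricted boundary has coefficient $\le 1$ at $y$. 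Inversion of adjunction then forces $(Q,\ell_1+c_2\ell_2+B')$, and hence $(Q,B)$, to be log canonical at $y$, contradicting what we proved on $\tilde{V}$. This coefficient bookkeeping on the quadric surface is the only subtle point; everything else is the standard discrepancy computation for the blow-up of an ordinary double point together with inversion of adjunction.
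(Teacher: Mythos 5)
The first thing to say is that there is no proof in the paper to compare yours against: Theorem~\ref{theorem:sing-mult} is imported as a black box from~\cite[Theorem~1.7.20]{Cheltsov-survey} and is never reproved in the text, since it is only an auxiliary input to the exclusion arguments of the later sections. Judged on its own merits, your argument is correct, and it is in substance the standard proof of this fact in the literature (Corti's technique): blow up the node, note that the exceptional divisor $E$ is a smooth quadric with $a(E;V,D)=n-2-\mult_x D$, so that after disposing of the case $\nu=\ord_E$ the offending valuation may be assumed to have center inside $E$; transfer the failure of log canonicity to $(E,\tilde{D}|_E)$ by inversion of adjunction; compute $\tilde{D}|_E\numeq(\mult_x D)H$ from $E|_E\numeq-H$ and the numerical triviality of $\pi^*D$ on $E$; and conclude by intersecting with the rulings of a quadric surface. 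Your handling of the delicate case where both rulings through $y$ lie in $\supp B$ (bounding the coefficient of $y$ in the restriction to $\ell_1$ by $c_2+(m-c_2)=m$ and then applying inversion of adjunction on the surface to contradict non-log-canonicity when $m\le 1$) is exactly right.

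Two points should be made explicit to make the write-up airtight. First, the sentence ``the failure of canonicity must be witnessed by a valuation whose center is contained in $E$'' presumes that ``not canonical at $x$'' means that $x$ itself is the center of the bad valuation; this is the reading consistent with how the paper uses the theorem (non-canonical \emph{centers}), but if one allows the center merely to pass through $x$, you need the one-line reduction: at a general point of a positive-dimensional center $V$ is smooth, non-canonicity there forces multiplicity $>1$, and upper semicontinuity of multiplicity then gives $\mult_x D>1$. Second, you invoke inversion of adjunction for \emph{log canonicity} several times (from $\tilde{V}$ to $E$, in the general-hyperplane-section reduction when $\dim V\ge 4$, and from the quadric surface to the ruling $\ell_1$). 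For $\dim V=3$, which is the only case the paper uses, these are the classical statements for surfaces inside smooth threefolds (Shokurov; see~\cite{Kollar}); for $\dim V\ge 4$ the full lc inversion of adjunction is Kawakita's theorem, or one must instead use the plt/klt inversion (valid in all dimensions via connectedness) together with a small perturbation of the boundary coefficients, which your strict inequality $a(\nu;\tilde{V},E+\tilde{D})<-1$ permits. This is a matter of citation rather than a gap, but since inversion of adjunction is the only genuinely nontrivial input to your proof, it deserves a precise reference.
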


\begin{lemma}[{cf.~\cite[Lemma~0.2.8]{CheltsovPark-Halphen}}]
\label{lemma:semi-definite}
Let $S$ be a nonsingular surface and $\Delta$ an effective divisor on
$S$ such that
$$\Delta\numeq\sum\limits_{i=1}^r c_i C_i,$$
where $c_i>0$ and
the support of $\Delta$ does not contain any of the curves~$C_i$.
Assume that the intersection form on the subspace
$W\nlb\subset\nlb\NS^1(S)$ generated by the curves $C_i$ is 
negative semidefinite. Then $\Delta^2=0$.
\end{lemma}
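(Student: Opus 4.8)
The plan is to pin down $\Delta^2$ by establishing two opposite inequalities, from which it will follow that this number vanishes.

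First I would exploit effectivity. Since each $C_i$ is an irreducible curve not contained in $\supp\Delta$, the effective divisor $\Delta$ shares no component with $C_i$, so the intersection number $\Delta\cdot C_i$ is a genuine count of intersection points and hence $\Delta\cdot C_i\ge 0$. Replacing one of the two factors of $\Delta$ by the numerically equivalent class $\sum_{i=1}^r c_iC_i$ then gives
$$\Delta^2=\Delta\cdot\sum_{i=1}^r c_iC_i=\sum_{i=1}^r c_i\,(\Delta\cdot C_i)\ge 0,$$
because every coefficient $c_i$ is positive.

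Next I would invoke the hypothesis on the intersection form to obtain the reverse bound. The class $\Delta\numeq\sum_i c_iC_i$ lies in the subspace $W\subset\NS^1(S)$ spanned by the $C_i$, and evaluating its self-intersection entirely inside $W$ yields $\Delta^2=\bigl(\sum_i c_iC_i\bigr)^2\le 0$, since the form on $W$ is negative semidefinite. Combining this with the previous paragraph forces $\Delta^2=0$.

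I expect the only point requiring genuine care to be the sign assertion $\Delta\cdot C_i\ge 0$: this is precisely where the assumption that $\supp\Delta$ contains none of the $C_i$ enters, guaranteeing that $\Delta$ and $C_i$ meet properly so that their intersection is a nonnegative number. Everything else is a formal manipulation of the numerical equivalence together with the semidefiniteness hypothesis, so the argument should be short.
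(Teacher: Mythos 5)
Your proof is correct and follows essentially the same route as the paper's: both use the numerical equivalence to replace one factor of $\Delta^2$ by $\sum c_iC_i$, deduce nonnegativity from the absence of common components between $\Delta$ and the $C_i$, and deduce nonpositivity from the negative semidefiniteness of the form on $W$. The paper merely compresses the two inequalities into a single chain $0\ge(\sum c_iC_i)^2=\Delta\cdot\sum c_iC_i\ge 0$, which is exactly your argument.
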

\begin{proof}
The argument is identical to that of Lemma~0.2.8
in~\cite{CheltsovPark-Halphen}. Let $\Delta=\sum_{j=1}^k b_jB_j$, $b_i>0$.
Then
$$
0\ge (\sum\limits_{i=1}^r c_iC_i)^2
=(\sum\limits_{j=1}^k b_jB_j)(\sum\limits_{i=1}^r c_iC_i)\ge 0,
$$
that is,
$$0=(\sum\limits_{i=1}^r c_iC_i)^2=\Delta^2.$$
\end{proof}

\begin{lemma}\label{lemma:tangent-plane}
Let $L\subset Y\subset\P^4$ be a line inside a nodal quartic.
Then the following conditions are equivalent:

(i) there is a hyperplane $H$ tangent to $Y$ along $L$,

(ii) there are infinitely many planes $\Pi$ such that 
$\left.Y\right|_{\Pi}=2L+Q$ for some (possibly reducible) conic $Q$,

(iii) $L$ contains three singular points of $Y$.

Moreover, if one of these conditions holds then 
$\mult_L H=2$ and any plane $\Pi$ as in 
$(ii)$ is contained in $H$. 
\end{lemma}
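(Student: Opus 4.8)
The plan is to choose coordinates $x_0,\ldots,x_4$ on $\P^4$ so that $L=\{x_2=x_3=x_4=0\}$ and to write the defining quartic $F$ of $Y$ as
$$F=A_2x_2+A_3x_3+A_4x_4+(\text{terms of order}\ge 2\ \text{in}\ x_2,x_3,x_4),$$
where $A_2,A_3,A_4$ are binary cubics in $x_0,x_1$ (there is no term depending only on $x_0,x_1$ because $L\subset Y$). Two elementary observations drive everything. First, a point $p\in L$ is singular on $Y$ exactly when the gradient of $F$ vanishes there, and along $L$ this gradient equals $(0,0,A_2(p),A_3(p),A_4(p))$; hence $\Sing Y\cap L$ is the common zero locus of $A_2,A_3,A_4$ on $L\iso\P^1$, and at a smooth point $p\in L$ the embedded tangent hyperplane to $Y$ is $\{A_2(p)x_2+A_3(p)x_3+A_4(p)x_4=0\}$. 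Second, since $Y$ is nodal its singularities are isolated, so $L\not\subset\Sing Y$ and the cubics $A_2,A_3,A_4$ do not all vanish identically.

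I will then reduce all three conditions to the single statement that $A_2,A_3,A_4$ are proportional as binary cubics. For (i): a hyperplane tangent to $Y$ along $L$ exists iff the embedded tangent hyperplane $\{\sum A_i(p)x_i=0\}$ is independent of the smooth point $p\in L$, i.e. iff $[A_2:A_3:A_4]$ is a constant point of $\P^2$, i.e. iff the $A_i$ are proportional; in that case $A_i=a_ic$ for a fixed cubic $c$ and constants $a_i$, and $H=\{a_2x_2+a_3x_3+a_4x_4=0\}$. For (ii): parametrize the planes through $L$ by $[s:t:u]\in\P^2$, letting $\Pi_{[s:t:u]}=\ang{L,[0:0:s:t:u]}$, and restrict $F$ via $(x_2,x_3,x_4)=\sigma(s,t,u)$ to obtain $F|_{\Pi}=\sigma\ell+\sigma^2q+\sigma^3r+\sigma^4e$ with $\ell=sA_2+tA_3+uA_4$. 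Since $\{\sigma=0\}=L$ inside $\Pi\iso\P^2$, the restriction is divisible by $\sigma^2$, equivalently $Y|_\Pi=2L+Q$ for a conic $Q$, exactly when $\ell\numeq 0$; this linear condition on $[s:t:u]$ has infinitely many solutions iff the kernel of $(s,t,u)\mapsto sA_2+tA_3+uA_4$ has dimension at least two, i.e. iff $A_2,A_3,A_4$ span a space of dimension $\le 1$, i.e. iff they are proportional.

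The remaining equivalence with (iii) is where the nodal hypothesis is essential, and I expect it to be the main point. If $L$ contains three distinct singular points, then each binary cubic $A_i$ vanishes at all three, hence is a scalar multiple of the fixed product of the three corresponding linear forms, so the $A_i$ are proportional. Conversely, if $A_i=a_ic$, the singular points on $L$ are the zeros of the single cubic $c$, and I must rule out multiple roots: if $c$ vanished to order $\ge 2$ at some $p_0\in L$, then writing $F$ in affine coordinates centred at $p_0$ shows that the term $c\cdot(a_2x_2+a_3x_3+a_4x_4)$ contributes nothing to the quadratic part of $F$ at $p_0$, so the Hessian of $F$ at $p_0$ is degenerate along $L$ and $p_0$ is not a node~--- contradicting nodality. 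Hence $c$ has three simple roots and $L$ carries exactly three singular points. (Note also that $L$ cannot contain four singular points, since that would force all $A_i\numeq 0$ and $L\subset\Sing Y$.)

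Finally, for the supplementary assertions I assume these equivalent conditions hold, with $A_i=a_ic$ and $H=\{a_2x_2+a_3x_3+a_4x_4=0\}$. On $H$ the linear part $\sum A_ix_i=c\cdot(a_2x_2+a_3x_3+a_4x_4)$ of $F$ vanishes, so $F|_H$ vanishes to order $2$ along $L$; thus $L$ occurs in the surface $Y\cap H$ with multiplicity $2$, giving $\mult_L H=2$. And a plane $\Pi_{[s:t:u]}$ as in (ii) satisfies $sa_2+ta_3+ua_4=0$ (this is the condition $\ell\numeq 0$ after dividing by $c$), which says precisely that the point $[0:0:s:t:u]$ lies on $H$; since $L\subset H$ as well, we conclude $\Pi_{[s:t:u]}\subset H$.
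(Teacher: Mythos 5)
Your reduction of (i), (ii) and (iii) to the proportionality of the binary cubics $A_2,A_3,A_4$ is correct, and the place where you invoke nodality~--- excluding a multiple root of $c$ by showing that the Hessian at such a root would be degenerate~--- is exactly the crux of the equivalence with (iii). (The paper offers no argument here beyond the word ``Easy,'' so there is nothing to compare against; your route is the natural one.) The claim that every plane $\Pi$ from (ii) lies in $H$ is also proved correctly.

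There is, however, a genuine gap in the other ``moreover'' assertion: you prove $\mult_L H\ge 2$, not $\mult_L H=2$. Write $F=c\cdot\lambda+G_2+G_3+G_4$, where $\lambda=a_2x_2+a_3x_3+a_4x_4$ and $G_k$ is the part of $F$ of degree $k$ in $(x_2,x_3,x_4)$. Then $F|_H=G_2|_H+G_3|_H+G_4|_H$, and the multiplicity of $Y\cap H$ along $L$ equals $2$ if and only if $G_2|_H\not\equiv 0$, i.e. if and only if $\lambda$ does not divide $G_2$; your argument never excludes the case $\lambda\mid G_2$, and this is not automatic~--- quartics of the form $F=\lambda(c+M)+G_3+G_4$ do satisfy (i)--(ii) and have $\mult_L H\ge 3$, they just fail to be nodal. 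The fix is the same Hessian computation you already used: if $G_2=\lambda\cdot M$ with $M=\sum m_i(x_0,x_1)x_i$, then at each singular point $p\in L$ (a simple zero of $c$, by your own argument) the quadratic part of $F$ in affine coordinates centred at $p$ equals $\hat\lambda\cdot\bigl(u_0y_1+\sum m_i(p)y_i\bigr)$, where $y_1$ is the coordinate along $L$ and $u_0\neq 0$ is the derivative of $c$ at $p$; this is a product of two linear forms, hence a quadratic form of rank at most $2$, so $p$ would not be a node. The missing half of the equality is not pedantry: the paper uses $\mult_L H=2$ (and not merely $\ge 2$) in the proof of Lemma~\ref{lemma:point-action-3-points}, where the divisor swept out by residual conics is computed to be $h-2e-e_1-e_2-e_L$.
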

\begin{proof}
Easy.
\end{proof}

The following lemmas describe the singularities of general
hyperplane sections of a threefold nodal hypersurface.

\begin{lemma}\label{lemma:general-section}
Let $Y\subset\P^4$ be a nodal hypersurface, $P$ a singular point of $Y$ and
$\Pi_0\ni P$ a two-dimensional plane. Assume that
$$\left.Y\right|_{\Pi_0}=\sum m_i C_i+\sum m_j' C_j',$$
where $P\not\in C_j'$, $P\in C_i$,
the curves $C_i$ are nonsingular at $P$, and $m_i$, $m_j'$ are 
integers. Let
$k=\sum m_i$. Take a general hyperplane section $H\subset Y$ passing
through $\Pi_0$.
Then the singularity $P\in H$ is Du Val of type $A_{k'}$ with
$k'\le k-1$.
\end{lemma}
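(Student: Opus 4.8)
The plan is to analyze the singularity $P \in H$ by passing to a resolution and controlling the exceptional configuration. Since $P$ is a node of the threefold $Y$, after blowing up $P$ the exceptional divisor is a smooth quadric surface $\cong \P^1 \times \P^1$, and locally $Y$ looks like a cone over this quadric. The key observation is that a general hyperplane section $H$ through the fixed plane $\Pi_0$ meets the exceptional quadric in a conic, and the type of the Du Val singularity $P \in H$ is governed by how the strict transform of $H$ (equivalently, the tangent directions of the curves $C_i$ through $P$) meets this exceptional conic. So first I would set up local analytic coordinates in which $Y$ is given by $q(x,y,z) + (\text{higher order}) = 0$ with $q$ a nondegenerate quadratic form in three variables, and express the plane $\Pi_0$ and the curves $C_i$ in these coordinates.

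Next I would use the classical description of Du Val singularities of surface sections of a node: if $H$ is a surface with a node $P$ as an embedded point, the general such section is $A_n$ where $n$ is determined by the order of tangency/contact of $H$ with the quadric cone at $P$. Concretely, blowing up $P \in H$ resolves the $A_1$ part, and the length of the resulting $A$-chain is bounded by the intersection multiplicity of the strict transform of $H$ with the exceptional locus. Because the curves $C_i$ all pass through $P$ and are nonsingular there, each contributes its branch, and the total multiplicity $k = \sum m_i$ counts (with multiplicity) the local intersection of $\left.H\right|_{\Pi_0}$-configuration through $P$. The bound $k' \le k - 1$ should emerge because one unit of multiplicity is "absorbed" by the node itself — i.e. the $A_{k'}$ chain has length one less than the total tangential contact, the node being the $A_1$ base of the chain.

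The cleanest route is probably to compute directly on the blow-up $\pi : \tilde{Y} \to Y$ of the point $P$. Let $E \cong \P^1 \times \P^1$ be the exceptional quadric and let $\tilde{H}$ be the strict transform of $H$; then $\tilde{H}$ is smooth (for general $H$) and meets $E$ in a smooth conic $E \cap \tilde{H}$, so $P \in H$ is at worst $A_1$ after one blow-up unless the section is tangent to $E$ to higher order. Higher tangency happens precisely along the directions determined by the $C_i$, and the length of the chain is read off from the local intersection of the trace of $\Pi_0$ on $E$ with this conic. The generality of $H$ (varying $H \supset \Pi_0$) ensures that no unexpected extra tangency or worse singularity type (e.g. $D$ or $E$, or a non-Du Val point) occurs away from the forced contact coming from $\Pi_0$, so the type is exactly $A_{k'}$ for some $k' \le k-1$.

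I expect the main obstacle to be the bookkeeping that produces the precise bound $k' \le k - 1$ rather than $k' \le k$: one must argue carefully that the node contributes exactly one to the tangential-contact count and is not double-counted, and that taking $H$ general (so that its strict transform is smooth and transverse to $E$ away from the prescribed locus) rules out any additional contributions. Making ``general hyperplane section passing through $\Pi_0$'' precise — i.e. checking that the generic member of the relevant linear subsystem really achieves the minimal possible singularity type and that the singularity stays Du Val of type $A$ and does not degenerate — is where the real work lies; the local computation in coordinates, while routine, has to be organized so that the integer $k = \sum m_i$ visibly controls the contact order with the exceptional quadric.
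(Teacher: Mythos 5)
Your first steps match the paper's: local coordinates $xz+yt+F_{\ge 3}=0$ at the node, blow-up of $P$ with exceptional quadric, and the observation that when the quadratic form does not vanish identically on $\Pi_0$ the general $H$ has an $A_1$ point. But from there the proposal is factually wrong in the only case that matters, and it is missing the idea that actually produces the bound. When $xz+yt$ vanishes on $\Pi_0$ (exactly the case where $k'\ge 2$ can occur), the projectivization of $\Pi_0$ is a line $l$ lying \emph{on} the exceptional quadric $E$, so the strict transform $\bar{H}$ of a general $H\supset\Pi_0$ meets $E$ not in a smooth conic but in a pair of ruling lines $l\cup l'$; moreover $\bar{H}$ is not smooth: it has a residual $A_{k'-2}$ singularity at $l\cap l'$. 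So "one blow-up plus transversality away from the prescribed locus" simply does not happen, and the quantity you propose to read the chain length from (the local intersection of the trace of $\Pi_0$ on $E$ with the conic $E\cap\bar{H}$) is degenerate, since that trace is a component of the conic.

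The deeper gap is that no tangency bookkeeping can yield $k'\le k-1$: the integer $k'$ is \emph{not} governed by the tangent directions of the $C_i$ (contrary to your parenthetical claim) but by the higher-order terms $F_{\ge 3}$; an $A_{k'}$ surface germ carries smooth curve germs meeting an end of its exceptional chain for every $k'$, so knowing only the tangent data of smooth curves through $P$ bounds nothing. The paper's mechanism is an integrality (Cartier) argument, which your proposal never touches. The divisor $D=\left.Y\right|_{\Pi_0}=\sum m_iC_i+\sum m_j'C_j'$ is Cartier on $H$, being cut by a hyperplane of $\ang{H}\iso\P^3$. On the minimal resolution $f:H'\to H$ the exceptional locus over $P$ is a chain $l_1,\ldots,l_{k'}$ of $(-2)$-curves, and the strict transforms of \emph{all} the $C_i$ meet the same end curve $l_{k'}$, the strict transform of $l$ (here generality of $H$ is used: the point $l\cap l'$ moves with $H$ and so avoids the finitely many fixed tangent directions of the $C_i$ on $l$). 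Solving $l_t\cdot f^*C_i=0$ for the chain gives the coefficient of $l_1$ in $f^*C_i$ as $\frac{1}{k'+1}$, hence the coefficient of $l_1$ in $f^*D$ is $\frac{k}{k'+1}$; since the pull-back of a Cartier divisor is integral, $(k'+1)\mid k$, whence $k'\le k-1$. Equivalently, each $C_i$ generates the local class group $\Z/(k'+1)$ of the $A_{k'}$ point and Cartier-ness of $D$ forces $k\equiv 0\pmod{k'+1}$. This divisibility (strictly stronger than the stated inequality) is what replaces your "the node absorbs one unit of multiplicity" heuristic, for which there is no actual mechanism.
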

\begin{proof}
Choose an affine neighborhood $U$ of $P$ with coordinates $x, y, z, 
t$ so that the hypersurface $Y$ is given by an equation $F(x, y, z, t)=0$,
where
$$F(x, y, z, t)=xz+yt+F_{{}\ge 3}(x, y, z, t),$$ 
and $\ord_0 (F_{{}\ge 3})\ge 3$.
If the restriction of the polynomial $xz+yt$ to $\Pi_0$ is not identically
zero, then $H$ has an ordinary double point (that is, a Du Val singularity
of type $A_1$) at $P$. Hence we may assume that $\Pi_0$ is given by the
equation $z=t=0$, and $H$ is cut out by a hyperplane $t=\alpha z$. Then
$H$ is given by the equation
$$(x+\alpha y)z+\tilde{F}_{{}\ge 3}(x+\alpha y, y, z)=0,$$
where $\ord_0 (\tilde{F}_{{}\ge 3})\ge 3$, 
and hence $H$ has a Du Val singularity of
type $A_{k'}$ at $P$ (see, for example, \cite[Chapter II, 11.1]{AVGZ}).

Assume that $k'\ge 2$. The projectivization of the plane $\Pi_0$ gives
a line $l$ contained in a nonsingular quadric
$Q=(xz+yt=0)\subset\P(V)\iso\P^3$, and the projectivization of the
hyperplane $t=\alpha z$ gives a plane in $\P(V)$,
intersecting $Q$ by a pair of lines $l\cup l'$.
Let $\ov{f}:\ov{Y}\to Y$ be a blow-up of the point $P$ with an exceptional
divisor $E$, and $\ov{H}=\ov{f}^{-1}H$.
Let $\ov{f}_H$ be a restriction of $\ov{f}$ to $\ov{H}$.
Then $\ov{f}$ is a blow-up of $H$ at the point $P$, and the exceptional
locus of $\ov{f}_H$ is identified with $E\cap\ov{H}=l\cup l'$.
The surface $\ov{H}$ has a Du Val singularity of type $A_{k'-2}$ at the
point $P'=l\cap l'$ and is nonsingular at the points $(l\cup
l')\setminus\{P'\}$. The proper transforms $\ov{f}_H^{-1}C_i$ of the
curves $C_i$ intersect the line $l$ and do not pass through $P'$.

Consider a resolution of singularities
$f:H'\to H$ that is obtained from $\ov{H}$ by a sequence of blow-ups. Let
$l_1, \ldots, l_{k'}$ be exceptional curves of the resolution $f$ that are
contracted to $P$, labelled so that
$l_il_{i+1}=1$ for $1\le i\le k'-1$.
According to the above observation, all proper transforms $f^{-1} C_i$
intersect one and the same exceptional curve, which corresponds to one
of the ends of the chain of exceptional curves (say,~$l_{k'}$) and is a 
strict transform of $l\subset\ov{H}$ on $H'$. 

Let us compute the multiplicities of the exceptional curves 
$l_t$ in the pull-back of the curve $C_i$. Let
$$f^* C_i=f^{-1}C_i+\sum\limits_{t=1}^{k'}a_{i, t}l_t.$$
From the system of equations
$$
0=l_tf^*C_i=
\begin{cases}
a_{i, 2}-2a_{i, 1} \text{ for } t=1,\\
a_{i, t+1}-2a_{i, t}+a_{i, t-1} \text{ for } 1<t<k',\\
1-2a_{i, k'}+a_{i, k'-1} \text{ for } t=k';
\end{cases}
$$
we obtain
$$a_{i, t}=\frac{t}{k'+1}.$$
In particular, for all $C_i$ we have
$$a_{i, 1}=\frac{1}{k'+1}.$$
Since $D=\sum m_iC_i+\sum m_j'C_j'$ is a Cartier divisor and hence the
divisor $f^*D$ is integral, one has $\frac{k}{k'+1}\in\Z$ and hence
$k\ge k'+1$.
\end{proof}

\begin{lemma}\label{lemma:general-section-mult-line}
Let $Y\subset\P^4$ be a nodal hypersurface of degree $\deg Y=d$ and
$L\subset Y$ a line, containing exactly $n$ singular points of
$Y$. Let $\Pi_0$ be a two-dimensional plane such that
$\left.Y\right|_{\Pi_0}=kL+C$, where $C\ge 0$ and $L\not\subset\supp C$.
Assume that $k\ge 2$, take a general hyperplane section $H\subset Y$
passing through $\Pi_0$ and let $$\PP=(L\cap\Sing H)\setminus (L\cap\Sing
Y).$$
Then
\begin{enumerate}
\item $H$ has isolated singularities, and for any point
$P_0\in L\setminus\Sing Y$ one can chose $H$ so that $H$ is
nonsingular at $P_0$;
\item $\PP$ contains at most $d-n-1$ points;
\item any point $P\in\PP$ is a Du Val singularity of type $A_{k-1}$ on
$H$. \end{enumerate}
\end{lemma}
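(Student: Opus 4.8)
The plan is to mimic the structure of the proof of Lemma~\ref{lemma:general-section}, working in a suitable affine chart along the line $L$ and analyzing a general hyperplane section through $\Pi_0$. Since $Y$ is nodal of degree $d$ and $L$ meets $\Sing Y$ in exactly $n$ points, I would first set up local coordinates so that $L$ is a coordinate line, write the equation of $Y$ as $F=0$, and record that the restriction $\left.Y\right|_{\Pi_0}=kL+C$ forces the polynomial $F$ to vanish to order $k$ along $L$ inside $\Pi_0$. The key point is that a general hyperplane $H$ through $\Pi_0$ is cut out by a one-parameter family, and genericity (in the sense of Bertini away from the fixed base locus $\Pi_0$) lets me control the singularities of $H$ that are not forced by the geometry.

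For part~(1), I would apply a Bertini-type argument: away from the base locus the general member of the pencil of hyperplane sections is smooth, so the singularities of $H$ are confined to $L$ (the locus where $\Pi_0\subset H$ forces tangency) together with the nodes of $Y$ lying on $H$; the former is one-dimensional a priori, but the general choice of $H$ makes $\Sing H$ isolated. The statement that $H$ can be chosen smooth at a prescribed point $P_0\in L\setminus\Sing Y$ follows because the condition ``$H$ singular at $P_0$'' is a proper closed condition on the pencil, so it fails for generic $H$. For part~(3), I would argue exactly as in Lemma~\ref{lemma:general-section}: at a point $P\in\PP$, after choosing the general hyperplane $t=\alpha z$, the local equation of $H$ becomes that of an $A_{k'}$-singularity, and because $\left.Y\right|_{\Pi_0}=kL+C$ has the line $L$ (not a curve $C_i$ nonsingular at $P$ with the multiplicities summing to something else) entering with multiplicity exactly $k$, the divisibility/pullback computation of that lemma pins down the type as precisely $A_{k-1}$ rather than merely $A_{k'}$ with $k'\le k-1$.

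For part~(2), the bound $\#\PP\le d-n-1$, I expect this to be the \textbf{main obstacle} and the genuinely new content. The idea is a degree count: restrict $Y$ to the hyperplane $H$ to get a surface $\left.Y\right|_H$ of degree $d$ containing $L$, and consider how the singular points on $L$ contribute. Each of the $n$ nodes of $Y$ on $L$ and each of the $A_{k-1}$ points in $\PP$ imposes conditions that can be read off from intersection numbers on (the resolution of) $H$, or alternatively from the restriction $\left.H\right|_{\text{plane through }L}$ decomposing as $kL+(\text{residual})$. I would count the intersection of $L$ with the residual curve $C$ in a general plane section: since $\deg\left.Y\right|_{\Pi}=d$ and the component $kL$ accounts for degree $k$, the residual conic/curve $C$ has degree $d-k$ and meets $L$ in $d-k$ points (counted with multiplicity); the singular points of $H$ on $L$ other than the nodes of $Y$ are exactly where this residual curve is tangent to or meets $L$ in a way producing the Du Val point, and bounding these by $d-n-1$ amounts to accounting for the $n$ forced nodes plus the generic transversality at one further point. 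The technical care will be in ensuring these multiplicities are counted correctly and that no hidden coincidences inflate $\#\PP$ — this is where I would spend the most effort, likely phrasing it via the arithmetic genus or via an explicit local analysis of $C\cap L$ along the lines already developed in Lemma~\ref{lemma:general-section}.
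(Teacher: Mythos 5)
Your part (1) is fine (it matches the paper's one-line argument: choose $\ang{H}\neq T_{P_0}Y$), but parts (2) and (3) both go wrong at the same spot: you are looking at the wrong curve. The points of $\PP$ have essentially nothing to do with the residual curve $C=\left.Y\right|_{\Pi_0}-kL$; in fact for general $H$ one can (and the paper does) arrange that $H$ is nonsingular at every point of $(L\cap C)\setminus\Sing Y$, so $\PP$ is \emph{disjoint} from $L\cap C$. The actual source of $\PP$ is seen by choosing coordinates with $\ang{H}=\{x_4=0\}$, $\Pi_0=\{x_3=x_4=0\}$, $L=\{x_2=x_3=x_4=0\}$, and writing $Y$ as $x_2^kF(x_0,x_1,x_2)+x_3G_3(x_0,\dots,x_3)+x_4G_4(x_0,\dots,x_4)=0$, so that $H=\{x_2^kF+x_3G_3=0\}$ in $\P^3$. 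Because $k\ge 2$, the partial derivatives with respect to $x_0,x_1,x_2$ vanish identically along $L$, hence $L\cap\Sing H$ is exactly the zero locus of $G_3|_L$, a \emph{nonzero} binary form of degree $d-1$ (nonzero since $H$ has isolated singularities). The $n$ nodes of $Y$ lying on $L$ are among its zeros (a hyperplane section through a singular point is singular there), so $\#\PP\le(d-1)-n$, which is assertion (2). Your proposed count via $C\cap L$ produces the number $\deg C=d-k$, which is not the claimed bound and does not even involve $n$: for $d=4$, $k=2$, $n=2$ the lemma asserts $\#\PP\le 1$ while $\deg C=2$, and the points you would be counting are (generically) not singular points of $H$ at all.

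For part (3) there are two independent problems with invoking the pullback/divisibility computation of Lemma~\ref{lemma:general-section}. First, that computation takes place on the resolution of a \emph{node of $Y$} (the local equation $xz+yt+F_{\ge 3}$, the exceptional quadric, the chain of curves), whereas every point of $\PP$ is a \emph{smooth} point of $Y$, so the setup does not transfer. Second, even if adapted, a divisibility argument only yields $(k'+1)\mid k$, i.e.\ $k'\le k-1$; it can never pin the singularity down to exactly $A_{k-1}$ (for $k=4$ it would still allow $A_0$, $A_1$ and $A_3$). The exactness is obtained in the paper by a direct local computation at $P\in\PP$: using $F(P)\neq 0$ (which holds because $P\notin L\cap C$) and the fact that for general $H$ the zero of $G_3|_L$ at $P$ is simple, the affine equation of $H$ near $P$ is $y^k(1+\tilde F(x,y))+z(c_xx+c_yy+c_zz+\tilde G_3(x,y,z))=0$ with $c_x\neq 0$, which after the coordinate change $u=c_xx+c_yy+c_zz+\tilde G_3$ becomes $zu+y^k(\mathrm{unit})=0$, an $A_{k-1}$ point. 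Some such local analysis is unavoidable, and it is genuinely different from the one in Lemma~\ref{lemma:general-section}.
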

\begin{proof}
The first assertion is obvious: it suffices to choose $H$ so that
the three-dimensional subspace $\ang{H}\iso\P^3$ does not coincide with a 
tangent
subspace $T_{P_0}Y\iso\P^3$ at $P_0\in L\setminus\Sing Y$.

Now choose homogeneous coordinates $x_0, \ldots, x_4$ in $\P^4$ such that
the subspace $\ang{H}$ is given by equation $x_4=0$, the plane
$\Pi_0$ by equations $x_3=x_4=0$, and the line $L$ by equations
$x_2=x_3=x_4=\nlb 0$. Then $Y$ is given by an equation of the form
$$x_2^kF(x_0, x_1, x_2)+x_3G_3(x_0, \ldots, x_3)+x_4G_4(x_0, \ldots,
x_4)=0,$$
where $\deg F=d-k$, $\deg G_3=\deg G_4=d-1$. The equation
of the surface $H$ in $\ang{H}\iso\P^3$ with homogeneous coordinates $x_0,
\ldots,x_3$ is
\begin{equation}\label{equation:DuVal1}
x_2^kF(x_0, x_1, x_2)+x_3G_3(x_0, \ldots, x_3)=0.
\end{equation}
Note that partial derivatives of the left hand side
of~\ref{equation:DuVal1} with respect to $x_0$, $x_1$ and $x_2$
vanish on the line $L$, 
hence the set $L\cap\Sing H$ is just a zero locus of the
restriction of the polynomial $G_3$ to $L$. Moreover, $G_3$ does not
vanish identically on $L$ since otherwise $H$ would be singular along $L$.
This implies the second assertion of the Lemma.

To prove the third assertion consider a point $P\in\PP$. We may assume
that $P=(1:0:0:0:0)$. By the first assertion of the Lemma
for any point $P'\in L\setminus\Sing Y$ there is a hyperplane section
nonsingular at $P'$; since $H$ is general, we may assume that
the surface $H$ is nonsingular at all the points
$P'\in (L\cap C)\setminus\Sing Y$, i.\,e. $P$ is not contained in
$L\cap C$ and hence $F$ is not of the form $F=x_1F_1+x_2F_2$.
Since $G_3$ does not vanish identically on $L$, it is not of the form
$G_3=x_2G_{32}+x_3G_{33}$. Choose an affine neighborhood $U$ of $P$; let
$x$, $y$, $z$ be coordinates in $U$ corresponding to (homogeneous)
coordinates $x_1$, $x_2$, $x_3$.
The surface $H$ in the neighborhood of $P$ is given by
\begin{equation}\label{equation:DuVal2}
y^k(1+\tilde{F}(x, y))+z(c_xx+c_yy+c_zz+\tilde{G}_3(x, y, z)),
\end{equation}
where $\ord_0\tilde{F}\ge 1$, $\ord_0\tilde{G}_3\ge 2$, $c_x$, $c_y$ and
$c_z$ are constants such that $c_x\neq\nlb 0$. It is easy to see that
the equation~\ref{equation:DuVal2} defines a Du Val singularity of type
$A_{k-1}$.
\end{proof}

\section{Generators and relations}\label{section:generators}

From now on we denote by $X$ a nodal factorial quartic threefold.
In this section we recall constructions of birational involutions that 
(together with $\Aut(X)$) generate $\Bir(X)$, and list obvious relations 
between them. Note that the generators of $\Bir(X)$ are constructed 
in a very standard way
(see, for example,~\cite[Introduction and Chapter~V,~1.4]{Manin}, 
\cite[3.1.2 and 3.1.4]{IskovskikhPukhlikov},
\cite[5.1.2 and 5.1.3]{IskovskikhPukhlikov},
\cite[2.6]{CPR}, \cite[Example~4.4]{Shramov}~etc).

\begin{example}
\label{example:point-involution}
Let $P$ be a singular point of $X$. Projection from $P$ defines a
(rational) double cover $\phi:X\dasharrow\P^3$; the Galois involution of 
$\phi$ gives rise to a birational involution $\tau_P$ of $X$.
\end{example}

\begin{example}
\label{example:line-1-point-involution}
Let $P$ be a singular point of $X$, and $L\subset X$ a line containing
$P$ and no other singular points of $X$. Projection from $L$ defines
an elliptic fibration $\psi:X\dasharrow\P^2$, and fiberwise 
reflection\footnote{To be more precise one should define the reflection on a
general fiber of (a~regularization of) $\psi$ and then extend it to 
an involution of the whole variety.}
in a section of $\phi$ arising from the point $P$ gives rise to a birational 
involution $\tau_L$ of $X$.
\end{example}

\begin{example}
\label{example:line-2-points-involution}
Let $P_1$ and $P_2$ be singular points of $X$, and $L\subset X$ a line 
passing through $P_1$ and $P_2$ but no other singular points of $X$.
As in Example~\ref{example:line-1-point-involution}, define an elliptic 
fibration $\psi$, denote by $E_1$ and $E_2$ the sections of its regularization 
corresponding to the points $P_2$ and $P_2$, and take a reflection 
(with respect to the group law on a general fiber)
in the section\footnote{\label{footnote:popolam}
Actually, since an elliptic curve contains $2$-torsion points, 
$\frac{E_1+E_2}{2}$ is not correctly defined as a section of the elliptic 
fibration, but the corresponding fiberwise reflection is correctly defined 
since it does not depend on $2$-torsion, so from here on we will allow such 
abuse of notation.}
$\frac{E_1+E_2}{2}$; one can also define this involution as 
a fiberwise Galois involution with respect to the section $-(E_1+E_2)$, i.\,e.
the section arising from $L$. We will also denote the corresponding 
birational involution by~$\tau_L$.
\end{example}

\begin{remark}
Note that the involution $\varphi^L_2$ defined in~\cite{Mella} in the setting
of Example~\ref{example:line-2-points-involution} is different from the
involution $\tau_L$ defined 
in Example~\ref{example:line-2-points-involution} (in~\cite{Mella}
it corresponds to a reflection in $E_1$). This does not matter if one 
is interested only in the structure of the group $\Bir(X)$ since
$\tau_L=\tau_{P_1}\circ\tau_{P_2}\circ\varphi^L_2$,
but our definition is slightly more natural from the point of view of 
Sarkisov program, since it is exactly the untwisting involution for $L$ in 
this case (see Lemma~\ref{lemma:line-2-points-action}). 
\end{remark}

\begin{remark}\label{remark:no-line-3-points}
A quartic with isolated singularities cannot have more than three collinear 
singular points. The situation of three singular points $P_1$, $P_2$ and 
$P_3$ contained in some line $L\subset X$ is possible, but such lines 
do not contribute to $\Bir(X)$ since they cannot be non-canonical centers 
(see~\cite{Mella} or use Lemma~\ref{lemma:tangent-plane}). 
Moreover, if one defines an involution $\tau_L$ 
in this situation as in Example~\ref{example:line-2-points-involution}
with respect to the points $P_1$ and $P_2$, it will coincide with the 
involution $\tau_{P_3}$.
\end{remark}

\begin{remark}\label{remark:all-on-elliptic-fibration}
Note that an involution $\tau_P$ also acts as a fiberwise reflection on
any elliptic fibration associated with a line $L\subset X$ containing~$P$
(one should reflect in the section $-\frac{E_P}{2}$, where $E_P$ is a 
section corresponding to $P$). 
\end{remark}

One of the main results of~\cite{Mella} states 
(see Theorem~\ref{theorem:Mella-rigid}) that the involutions 
listed in Examples~\ref{example:point-involution}, 
\ref{example:line-1-point-involution} 
and~\ref{example:line-2-points-involution} together with $\Aut(X)$ generate the 
group $\Bir(X)$. On the other hand, it is easy to see that there may be
relations between these generators.

\begin{example}\label{example:3-points-relation}
Let $P_1, P_2, P_3\in\Sing X$ be collinear. Then the line
$L=\ang{P_1, P_2, P_3}$ is contained in $X$, and all the involutions 
$\tau_{P_i}$ act fiberwise on the corresponding elliptic fibration.
Hence one has
\begin{equation}\label{equation:3-points-relation}
(\tau_{P_1}\circ\tau_{P_2}\circ\tau_{P_3})^2=\id
\end{equation}
by the well-known relation between three reflections on an elliptic curve
(see, for example,~\cite[Chapter I, 2.3]{Manin}).
\end{example}

\begin{example}\label{example:line-2-points-relation}
Let $P_1, P_2\in\Sing X$; let $L\subset X$ be a line containing $P_1$ and $P_2$ but no other singular points of $X$. Then all three involutions 
$\tau_L$, $\tau_{P_1}$ and $\tau_{P_2}$ act fiberwise on the elliptic fibration associated to $L$, so that
\begin{equation}\label{equation:line-2-points-relation}
(\tau_{P_1}\circ\tau_{P_2}\circ\tau_L)^2=\id.
\end{equation}
\end{example}

\begin{remark}\label{remark:index-permutation}
Note that there are other relations that differ 
from~\ref{equation:3-points-relation}
and~\ref{equation:line-2-points-relation} by a permutation of indices,
but they are equivalent to~\ref{equation:3-points-relation}
and~\ref{equation:line-2-points-relation} (modulo trivial relations
$\tau_{P_i}^2=\tau_L^2=\id$).
\end{remark}

One of the main goals of this paper is to show that 
the relations~\ref{equation:3-points-relation} 
and~\ref{equation:line-2-points-relation}  
imply all relations in $\Bir(X)$ (up to
trivial ones, see Theorem~\ref{theorem:relations}). This will be proved in
section~\ref{section:max-centers}.


\section{Action of birational involutions}
\label{section:action}

In this section we gather information about the action of the birational 
involutions $\tau_P$ and $\tau_L$, i.\,e. describe the way the degrees 
and multiplicities change under the action of these involutions.

We fix the following notations. Let $\chi:X\dasharrow X$ be a birational map,
and $\H=\H(\chi)$ be a linear system of degree $\mu(\chi)$ defined 
as in section~\ref{section:preliminaries}. For a subvariety 
$Z\subset X$ we put $\nu_Z(\chi)=\mult_Z\H(\chi)$.

\begin{remark}\label{remark:codim-2}
Assume that a line $L\subset X$ is not an Eckardt line, 
contains a singular point $P$ and at most one more singular 
point of $X$. Then there is only a finite number of conics and lines in the 
fibers of a projection $\psi$ from $L$: if a fiber is reducible, then it either 
contains lines intersecting $L$ and different from $L$ (by assumption 
there is only a finite number of fibers of this type), or 
it contains $L$, i.\,e. the corresponding plane section has multiplicity at 
least $2$ along $L$, which is possible for an infinite number of plane sections
only if $L$ contains three singular points of $X$ by 
Lemma~\ref{lemma:tangent-plane}. Moreover, only 
a finite number of irreducible residual 
cubic curves in plane sections passing through $L$
has a singular point at $P$, and in the case of two singular points of $X$ 
lying on $L$ none of these irreducible cubic curves has a singular point 
on $L$ outside the singular points on $X$. Hence 
the birational involutions $\tilde{\tau_P}$ and~$\tilde{\tau_L}$ 
(corresponding to $\tau_P, \tau_L\in\Bir(X)$) 
of the variety $\tilde{X}$, obtained 
as a blow-up of $X$ first at singular points lying on $L$ and then along
the strict transform of $L$, are regular in codimension $1$ 
since both a reflection and a Galois involution are well 
defined in a smooth point of an irreducible plane cubic.
\end{remark}

\begin{lemma}\label{lemma:line-1-point-action}
Let $L\subset X$ be a line containing a unique singular point $P$ of~$X$. 
Assume that $L$ is not an Eckardt line.
Then
\begin{gather*}
\mu(\chi\circ\tau_L)=11\mu(\chi)-10\nu_L(\chi),\\
\nu_L(\chi\circ\tau_L)=12\mu(\chi)-11\nu_L(\chi),\\
\nu_P(\chi\circ\tau_L)=6\mu(\chi)-6\nu_L(\chi)+\nu_P(\chi).
\end{gather*}
\end{lemma}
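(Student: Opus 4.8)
The plan is to track how the linear system $\H=\H(\chi)$ transforms under the birational involution $\tau_L$ by computing everything on a smooth model where $\tau_L$ becomes biregular in codimension one. Concretely, I would let $f:\tilde X\to X$ be the blow-up described in Remark~\ref{remark:codim-2}, namely first blow up the singular point $P$ lying on $L$, then blow up the strict transform of $L$; on $\tilde X$ the induced involution $\tilde\tau_L$ is regular in codimension~$1$. The strategy is to express the numerical class of the strict transform $\tilde\H$ of $\H$ in the Picard (or Néron--Severi) group of $\tilde X$ in terms of $f^*(-K_X)$, the exceptional divisor $E_P$ over the node, and the exceptional divisor $E_L$ over the line, with coefficients read off from $\mu(\chi)$, $\nu_P(\chi)$ and $\nu_L(\chi)$. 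Then I would apply $\tilde\tau_L^*$ to this class and push forward to read off the new invariants $\mu(\chi\circ\tau_L)$, $\nu_L(\chi\circ\tau_L)$ and $\nu_P(\chi\circ\tau_L)$.

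The key computational input is the action of $\tilde\tau_L^*$ on $\mathrm{Pic}(\tilde X)$, which I would determine from the geometry of the elliptic fibration $\psi:X\dasharrow\P^2$ obtained by projecting from $L$. On a general fiber (a plane cubic through $P$), $\tau_L$ is the fiberwise reflection in the section coming from $P$ (Example~\ref{example:line-1-point-involution}), so I would first compute how $\tau_L$ acts on the relevant divisor classes restricted to a fiber and on the multisection data, using the standard relation $\tau_L^* = $ reflection in the appropriate section. The degree formula $\mu(\chi\circ\tau_L)=11\mu(\chi)-10\nu_L(\chi)$ strongly suggests that the fibers of $\psi$ are cubic curves and that $\tilde\H$ meets a general fiber in a fixed number of points, with the involution acting by $x\mapsto 2P_\infty-x$ in the group law; the coefficients $11,12,6$ should fall out of intersecting the transformed class with the curve classes $E_L\cdot(\text{fiber})$, the class of $L$, and the class of $E_P$ respectively. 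I would set up the three linear relations by pairing $\tilde\tau_L^*(\tilde\H)$ with a dual basis of test curves: a general fiber of $\psi$, the exceptional curve over $P$ in $E_L$, and a line in $E_P$.

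The main obstacle, and the step that deserves the most care, will be pinning down the exact self-intersection and incidence numbers among $f^*(-K_X)$, $E_P$ and $E_L$ on $\tilde X$, since the blow-up is centered first at a node (whose exceptional divisor is a quadric, contributing discrepancy $1$) and then along a curve passing through the node, so $E_P$ and the strict transform of $E_L$ interact in a nontrivial way. I would need the precise discrepancies and the intersection of $E_L$ with the proper transform of $E_P$ to get the coefficient $6$ in the last formula correct; an off-by-a-factor error here is the most likely pitfall. To control this I would work in explicit local coordinates near $P$ as in the proof of Lemma~\ref{lemma:general-section}, writing the node as $xz+yt+F_{\ge 3}=0$ and the line $L$ in suitable coordinates, and compute the two successive blow-ups directly. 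Once the intersection theory on $\tilde X$ is fixed and the reflection action on a general fiber is understood, the three claimed equalities reduce to a short linear-algebra computation, so I expect the bulk of the genuine work to be the local blow-up analysis and the verification that $\tilde\tau_L$ is an honest involution regular in codimension~$1$ (already granted by Remark~\ref{remark:codim-2}).
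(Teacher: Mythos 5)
Your proposal follows essentially the same route as the paper: the paper's proof also passes to the blow-up $\tilde{X}$ of $P$ and then of the strict transform of $L$ (where $\tilde{\tau_L}$ is regular in codimension $1$ by Remark~\ref{remark:codim-2}), determines the action of $\tilde{\tau_L}^*$ on $\Pic(\tilde{X})$ by restricting to a general fiber of the elliptic fibration where the involution is the explicit reflection, and pins down the remaining coefficients by intersection-number computations (the paper delegates this to the calculation of Lemma~5.1.3 of Iskovskikh--Pukhlikov, carried out in detail for the analogous Lemma~\ref{lemma:point-action-3-points}). Your plan is a sound blueprint for exactly that argument.
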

\begin{proof}
The proof is reduced to the calculation of the action of 
a birational involution $\tilde{\tau_L}$ corresponding to $\tau_L$ 
on the Picard 
group of the variety $\tilde{X}$ obtained as the blow-up first of $P$ 
and then of the strict
transform of $L$. Note that $\tilde{\tau_L}$ is regular on $X$ in 
codimension $1$ by Remark~\ref{remark:codim-2}. The rest of the 
calculation coincides with that of~\cite[Lemma~5.1.3]{IskovskikhPukhlikov}.
\end{proof}

\begin{lemma}\label{lemma:line-2-points-action}
Let $L\subset X$ be a line containing exactly two singular points of $X$,
say, $P_1$ and $P_2$.
Assume that $L$ is not an Eckardt line.
Then
\begin{gather*}
\mu(\chi\circ\tau_L)=5\mu(\chi)-4\nu_L(\chi),\\
\nu_L(\chi\circ\tau_L)=6\mu(\chi)-5\nu_L(\chi),\\
\nu_{P_1}(\chi\circ\tau_L)=3\mu(\chi)-3\nu_L(\chi)+\nu_{P_2}(\chi),\\
\nu_{P_2}(\chi\circ\tau_L)=3\mu(\chi)-3\nu_L(\chi)+\nu_{P_1}(\chi).
\end{gather*}
\end{lemma}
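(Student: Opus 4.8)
The plan is to mirror the proof strategy of Lemma~\ref{lemma:line-1-point-action}: reduce the computation of the action of $\tau_L$ on degrees and multiplicities to a linear-algebra computation on the Picard group of a suitable resolution of the elliptic fibration $\psi:X\dasharrow\P^2$ associated to $L$. The key structural difference from the one-point case is that $L$ now carries two singular points $P_1$ and $P_2$, so the relevant birational involution $\tilde\tau_L$ is a reflection with respect to $\frac{E_1+E_2}{2}$ (equivalently, a Galois involution with respect to the section $-(E_1+E_2)$ arising from $L$ itself, as in Example~\ref{example:line-2-points-involution}), rather than a reflection in a single section $-\frac{E_P}{2}$. First I would construct the variety $\tilde X$ by blowing up $X$ first at $P_1$ and $P_2$ and then along the strict transform of $L$, obtaining an honest (relatively minimal) elliptic fibration after passing to the generic fiber; by Remark~\ref{remark:codim-2} the lift $\tilde\tau_L$ is regular in codimension~$1$, so its action on $\NS$ captures all the information about how degrees and multiplicities transform.

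Next I would fix a basis of the relevant part of the Picard/Néron--Severi group of $\tilde X$: the pullback $H$ of the hyperplane class, the exceptional divisors $E_1,E_2$ over $P_1,P_2$, and the exceptional divisor $E_L$ over the strict transform of $L$. The linear system $\H(\chi)$ pulls back to a divisor whose coefficients against these classes are exactly $\mu(\chi)$, $\nu_{P_1}(\chi)$, $\nu_{P_2}(\chi)$ and $\nu_L(\chi)$ (up to bookkeeping constants). The core computation is then to write down the matrix of $\tilde\tau_L^*$ in this basis. On the generic elliptic fiber, $\tilde\tau_L$ is the Galois involution $x\mapsto -(E_1+E_2)-x$ attached to the section coming from $L$; its action on the Mordell--Weil lattice and on the fiber/section classes is standard, and one reads off how the fiber class and the three sections $E_1,E_2$ and the $L$-section transform under reflection, exactly as in the reference computation of \cite[Lemma~5.1.3]{IskovskikhPukhlikov}.

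Reassembling this, the degree $\mu$ and the multiplicities appear as linear combinations determined by the intersection numbers of $\H$ with the vertical and section classes, and applying the matrix of $\tilde\tau_L^*$ yields the four displayed formulas. The symmetry $P_1\leftrightarrow P_2$ visible in the last two equations is precisely the statement that the reflection with respect to $\frac{E_1+E_2}{2}$ swaps the roles of the two sections, and this symmetry is a useful consistency check on the sign conventions. The numerical coefficients $5,4,6,5,3,3$ (as opposed to $11,10,12,11,6,6$ in the one-point case) reflect the different anticanonical degree of the fibers once two singular points are blown up, i.e.\ the change in the self-intersection data on the resolved fibration.

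The main obstacle I expect is not the abstract structure but the careful bookkeeping of the blow-up geometry and the normalization of the section classes: one must verify that blowing up $P_1$, $P_2$ and then the strict transform of $L$ indeed produces a relatively minimal elliptic fibration with the expected fiber type, that $\tilde\tau_L$ is biregular in codimension~$1$ (which is exactly what Remark~\ref{remark:codim-2} supplies, using the hypothesis that $L$ is not an Eckardt line and contains no third singular point), and that the intersection numbers feeding into the matrix are computed with the correct discrepancy coefficients coming from the nodal (rather than smooth) nature of $P_1$ and $P_2$. Once the intersection table on $\tilde X$ is pinned down and the reflection formula on the generic fiber is invoked, the rest is the linear-algebra identity encoded in the reference, so I would compress that step by citing \cite[Lemma~5.1.3]{IskovskikhPukhlikov} and only indicate the modifications forced by the presence of the second section.
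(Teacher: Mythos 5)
Your proposal follows exactly the paper's route: the paper also reduces the statement to the action of $\tilde{\tau_L}$ on the Picard group of the blow-up of $X$ at $P_1$, $P_2$ and then along the strict transform of $L$, invokes Remark~\ref{remark:codim-2} for regularity in codimension~$1$, and delegates the remaining linear algebra to the computation of~\cite[Lemma~5.1.3]{IskovskikhPukhlikov}, just as in Lemma~\ref{lemma:line-1-point-action}. Your account is correct and, if anything, spells out more of the bookkeeping (the basis $h, e_1, e_2, e_L$, the Galois involution with respect to the section $-(E_1+E_2)$) than the paper's one-line proof does.
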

\begin{proof}
Analogous to that of Lemma~\ref{lemma:line-1-point-action}.
\end{proof}

\begin{lemma}\label{lemma:point-action-1-point}
Let $L\subset X$ be a line containing a unique singular point $P$ of~$X$.
Assume that $L$ is not an Eckardt line.
Then
\begin{gather*}
\mu(\chi\circ\tau_P)=3\mu(\chi)-2\nu_P(\chi),\\
\nu_P(\chi\circ\tau_P)=4\mu(\chi)-3\nu_P(\chi),\\
\nu_L(\chi\circ\tau_P)=\mu(\chi)-\nu_P(\chi)+\nu_L(\chi).
\end{gather*}
\end{lemma}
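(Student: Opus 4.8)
The plan is to follow the scheme of Lemma~\ref{lemma:line-1-point-action}: regularize $\tau_P$ on the threefold $\tilde{X}$ obtained by blowing up first the node $P$ and then the strict transform $\tilde{L}$ of $L$, and to read off all three formulas from the action of the induced birational involution $\tilde{\tau_P}$ on $\Pic(\tilde{X})\otimes\Q$. By Remark~\ref{remark:codim-2} (applied in the one-point case, using that $L$ is not an Eckardt line and contains no singular point of $X$ other than $P$) the map $\tilde{\tau_P}$ is regular in codimension~$1$, so $\tilde{\tau_P}^*$ is an honest involution of the Picard group, and it suffices to compute the $3\times 3$ matrix expressing it in the basis $\sigma^*H$, $E_P$, $E_L$, where $H$ is the hyperplane class on $X$, $E_P$ is the exceptional quadric over $P$, and $E_L$ is the exceptional divisor over $\tilde{L}$.

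First I would pin down the invariant class. By definition $\tau_P$ is the Galois involution of the double cover $\phi\colon\tilde{X}\to\P^3$ given by projection from $P$, and the preimage of a hyperplane is $h=\phi^*\O_{\P^3}(1)=\sigma^*H-E_P$; hence $\tilde{\tau_P}^*h=h$. This already fixes one eigenvector (equivalently, $K_{\tilde{X}}=-h$ is preserved), but it does not determine the matrix, so one more geometric input is needed on each of $E_P$ and $E_L$. For $E_P$ I would use the local structure of the projection at the node: writing $X$ near $P$ as $x_0^2q_2+x_0c_3+q_4=0$ with $q_2$ nondegenerate, the cover $\phi$ is branched over the sextic $\{c_3^2-4q_2q_4=0\}$ and $E_P$ maps to the quadric $\{q_2=0\}$, exactly as in the classical quadratic involution of \cite[3.1.4, 5.1.2]{IskovskikhPukhlikov}; this yields $\tilde{\tau_P}^*E_P=2\sigma^*H-3E_P-E_L$. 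For $E_L$ I would argue that the line $L\subset X$ through $P$ is contracted by $\phi$ to a single point $\ell\in\P^3$, whose fibre $\phi^{-1}(\ell)$ has $\tilde{L}$ as its only horizontal component; since the Galois involution preserves each fibre it maps $\tilde{L}$ to itself, so $\tilde{\tau_P}^*E_L=E_L$. Combined with $\tilde{\tau_P}^*h=h$ these determine
\[
\tilde{\tau_P}^*(\sigma^*H)=3\sigma^*H-4E_P-E_L,\quad
\tilde{\tau_P}^*E_P=2\sigma^*H-3E_P-E_L,\quad
\tilde{\tau_P}^*E_L=E_L,
\]
and as a consistency check these three maps square to the identity on $\Pic(\tilde{X})\otimes\Q$.

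It then remains to apply this to the strict transform of $\H$. Since $\H\subset|\mu H|$ with $\mult_P\H=\nu_P$ and $\mult_L\H=\nu_L$, one has $\tilde{\H}\numeq\mu\,\sigma^*H-\nu_P E_P-\nu_L E_L$, whence
\[
\tilde{\tau_P}^*\tilde{\H}\numeq(3\mu-2\nu_P)\sigma^*H-(4\mu-3\nu_P)E_P-(\mu-\nu_P+\nu_L)E_L,
\]
and reading off the degree and the two multiplicities gives precisely $\mu(\chi\circ\tau_P)=3\mu-2\nu_P$, $\nu_P(\chi\circ\tau_P)=4\mu-3\nu_P$ and $\nu_L(\chi\circ\tau_P)=\mu-\nu_P+\nu_L$, as claimed. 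The main obstacle is not the (purely linear) final step but the two geometric normalizations in the previous paragraph: justifying the value $\tilde{\tau_P}^*E_P=2\sigma^*H-3E_P-E_L$ from the behaviour of the Galois involution on the exceptional quadric of the node, and verifying that $\tilde{L}$ (hence $E_L$) is genuinely $\tilde{\tau_P}$-invariant, which is where the hypotheses that $L$ is not Eckardt and carries a single singular point are used to guarantee that the fibre $\phi^{-1}(\ell)$ contains no further moving component that $\tau_P$ could interchange with $\tilde{L}$.
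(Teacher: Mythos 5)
Your framework coincides with the paper's first step---regularize $\tilde{\tau_P}$ in codimension one on $\tilde{X}$ via Remark~\ref{remark:codim-2} and compute its action on $\Pic(\tilde{X})\otimes\Q$---and your final matrix and the three formulas are correct. But the way you pin down the matrix differs from the paper's: the paper uses that $\tau_P$ acts fiberwise, as the reflection $x\mapsto -P-x$, on the elliptic fibration obtained by projecting from $L$, which determines $\tilde{\tau_P}^*$ modulo the fiber class $f=\sigma^*H-E_P-E_L$, and then kills the remaining ambiguity by intersection numbers on the preimage of a general line in $\P^2$ (this is exactly the computation written out in the proof of Lemma~\ref{lemma:point-action-3-points}, following~\cite[Lemma~5.1.3]{IskovskikhPukhlikov}). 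You instead rely on the double-cover structure of the projection from $P$. That route can work, but note that your inputs (a) $\tilde{\tau_P}^*(\sigma^*H-E_P)=\sigma^*H-E_P$ and (c) $\tilde{\tau_P}^*E_L=E_L$, even combined with involutivity, do \emph{not} determine the matrix: for every $\alpha$ the assignment $\tilde{\tau_P}^*E_P=2\sigma^*H-3E_P-E_L+\alpha f$ (with the value on $\sigma^*H$ adjusted accordingly) is compatible with (a) and (c) and squares to the identity. So the whole content of the lemma sits in your claim (b), and that is where your proposal has a genuine gap.

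You justify (b) by appeal to the classical quadratic involution, but that computation takes place on the blow-up of $P$ alone and yields only $\tau_P^*E_P=2\sigma^*H-3E_P$ there; it says nothing about the coefficient of $E_L$. That coefficient equals $\mult_L(K\cap X)$, where $K=\{q_2=0\}$ is the tangent cone at $P$: indeed, $\tau_P(E_P)$ is the strict transform of the surface $K\cap X$, which contains $L$ because the direction of $L$ is a common zero of $q_2,q_3,q_4$. You must prove this multiplicity is exactly $1$, and this is not a formality: it is precisely where the hypothesis on the number of singular points of $X$ on $L$ enters, and the answer changes when $L$ carries three nodes---compare Lemma~\ref{lemma:point-action-3-points}, where $\nu_L(\chi\circ\tau_P)=2\mu-2\nu_P+\nu_L$---so no citation of the line-free classical case can produce it. To close the gap: $K$ is smooth at general points of $L$ and, being a cone, has constant tangent hyperplane along the ruling $L$; hence $\mult_L(K\cap X)\ge 2$ would force $T_xX=T_xK$ for general $x\in L$, i.e. a fixed hyperplane tangent to $X$ along $L$, which by Lemma~\ref{lemma:tangent-plane} occurs only when $L$ contains three singular points of $X$. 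Since here $L$ contains one, the multiplicity is $1$ and (b) follows. (Your argument for (c) also needs rephrasing: $\tilde{L}$ lies in the indeterminacy locus of $\tau_P$ on the blow-up of $P$, so ``the involution maps $\tilde{L}$ to itself'' is not meaningful as stated; the correct statement is that $E_L$ is the unique divisor of $\tilde{X}$ contracted to the point $\ell$ by the map to $\P^3$, and $\tilde{\tau_P}$, being an isomorphism in codimension one that commutes with that map, must fix $E_L$.)
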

\begin{proof}
Note that $\tau_P$ preserves the elliptic fibration associated with $L$.
The rest is analogous to Lemma~\ref{lemma:line-1-point-action}.
\end{proof}

\begin{lemma}\label{lemma:point-action-2-points}
Let $L\subset X$ be a line containing exactly two singular points of $X$,
say, $P$ and $P_1$.
Assume that $L$ is not an Eckardt line.
Then
\begin{gather*}
\mu(\chi\circ\tau_P)=3\mu(\chi)-2\nu_P(\chi),\\
\nu_{P}(\chi\circ\tau_P)=4\mu(\chi)-3\nu_P(\chi),\\
\nu_{P_1}(\chi\circ\tau_P)=\mu(\chi)-\nu_P(\chi)+\nu_L(\chi),\\
\nu_L(\chi\circ\tau_P)=\mu(\chi)-\nu_P(\chi)+\nu_{P_1}(\chi).
\end{gather*}
\end{lemma}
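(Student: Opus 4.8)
The plan is to follow the strategy of Lemmas~\ref{lemma:line-1-point-action} and~\ref{lemma:point-action-1-point} and reduce everything to a linear-algebra computation of the action of $\tau_P$ on the Picard group of a suitable resolution. First I would pass to the variety $\tilde{X}$ obtained by blowing up $X$ first at the two singular points $P$ and $P_1$ lying on $L$ and then along the strict transform of $L$; denote by $E_P$, $E_{P_1}$ and $E_L$ the corresponding exceptional divisors and by $H$ the pull-back of $-K_X$. By Remark~\ref{remark:codim-2} the birational involution $\tilde{\tau_P}$ induced by $\tau_P$ is regular in codimension $1$ on $\tilde{X}$, so it acts linearly on the subgroup of $\Pic(\tilde{X})$ spanned by $H$, $E_P$, $E_{P_1}$, $E_L$. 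Writing the strict transform of $\H(\chi)$ as $\mu H-\nu_P E_P-\nu_{P_1}E_{P_1}-\nu_L E_L$ (with the usual blow-up conventions for the multiplicities) and reading off the coefficients of the image of this class under $\tilde{\tau_P}$ yields the four numbers $\mu(\chi\circ\tau_P)$, $\nu_P(\chi\circ\tau_P)$, $\nu_{P_1}(\chi\circ\tau_P)$ and $\nu_L(\chi\circ\tau_P)$.

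The action on the pair $(\mu,\nu_P)$ is governed entirely by the geometry of the projection from $P$ near the node $P$, which is unaffected by the presence of the second node $P_1$; hence it is identical to the one-point computation of Lemma~\ref{lemma:point-action-1-point} and the calculation of~\cite[Lemma~5.1.3]{IskovskikhPukhlikov}, giving $\mu(\chi\circ\tau_P)=3\mu(\chi)-2\nu_P(\chi)$ and $\nu_P(\chi\circ\tau_P)=4\mu(\chi)-3\nu_P(\chi)$. For the remaining two coefficients I would use Remark~\ref{remark:all-on-elliptic-fibration}: $\tau_P$ preserves the elliptic fibration $\psi$ associated with $L$ and acts on it as the fiberwise reflection in the section $-\frac{E_P}{2}$, i.\,e. as $x\mapsto -E_P-x$. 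Since on a general fiber the section cut out by $L$ has class $-(E_P+E_{P_1})$ (cf. Example~\ref{example:line-2-points-involution}), this reflection interchanges the section coming from $P_1$ with the section coming from $L$; tracking multiplicities through this swap, together with the degree shift $\mu-\nu_P$, produces the symmetric pair $\nu_{P_1}(\chi\circ\tau_P)=\mu(\chi)-\nu_P(\chi)+\nu_L(\chi)$ and $\nu_L(\chi\circ\tau_P)=\mu(\chi)-\nu_P(\chi)+\nu_{P_1}(\chi)$.

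Finally I would record the sanity check that the resulting $4\times 4$ matrix squares to the identity, as it must since $\tau_P^2=\id$; this confirms that no bookkeeping error has crept in. The main obstacle is the careful accounting in the second paragraph: one must fix the order of the three blow-ups, compute the relevant intersection numbers on $\tilde{X}$, and verify the identification of the section classes on a general fiber of $\psi$ so that the fiberwise reflection genuinely interchanges the $P_1$-section and the $L$-section. Once this is in place, the rest is exactly the same linear algebra already used in the preceding lemmas, so I would be content to say that the computation is analogous to that of Lemma~\ref{lemma:point-action-1-point}.
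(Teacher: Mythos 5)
Your proposal is correct and takes essentially the same route as the paper: the paper's proof is literally ``analogous to Lemma~\ref{lemma:point-action-1-point}'', i.e. blow up $P$, $P_1$ and the strict transform of $L$, invoke Remark~\ref{remark:codim-2} for regularity in codimension $1$, and compute the action on the relevant part of $\Pic(\tilde{X})$ via the elliptic fibration associated with $L$, exactly as spelled out for the three-point case in the sketch of Lemma~\ref{lemma:point-action-3-points}. Your observation that the fiberwise reflection $x\mapsto -E_P-x$ swaps the $P_1$-section with the $L$-section $-(E_P+E_{P_1})$ is precisely the mechanism behind the exchange of $\nu_{P_1}$ and $\nu_L$ in the stated formulas, and your consistency check (the resulting matrix squares to the identity) indeed holds for the coefficients determined this way.
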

\begin{proof}
Analogous to that of Lemma~\ref{lemma:point-action-1-point}.
\end{proof}

\begin{lemma}\label{lemma:point-action-3-points}
Let $L\subset X$ be a line containing three singular points of $X$,
say, $P$, $P_1$ and $P_2$.
Assume that $P$, $P_1$ and $P_2$ are not Eckardt points.
Then
\begin{gather*}
\mu(\chi\circ\tau_P)=3\mu(\chi)-2\nu_P(\chi),\\
\nu_{P}(\chi\circ\tau_P)=4\mu(\chi)-3\nu_P(\chi),\\
\nu_{P_1}(\chi\circ\tau_P)=\mu(\chi)-\nu_P(\chi)+\nu_{P_2}(\chi),\\
\nu_{P_2}(\chi\circ\tau_P)=\mu(\chi)-\nu_P(\chi)+\nu_{P_1}(\chi),\\
\nu_L(\chi\circ\tau_P)=2\mu(\chi)-2\nu_P(\chi)+\nu_L(\chi).
\end{gather*}
\end{lemma}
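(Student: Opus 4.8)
The plan is to reduce the computation to a linear-algebra problem on the Picard group of the
variety $\tilde X$ obtained by blowing up $X$ successively at $P$, $P_1$, $P_2$ and then along
the strict transform of $L$, exactly as in the preceding lemmas. The action of the birational
involution $\tilde{\tau_P}$ corresponding to $\tau_P$ is, by
Remark~\ref{remark:codim-2} (applied to the line $L$ together with its three singular points
via Lemma~\ref{lemma:tangent-plane}) and the hypothesis that $P$, $P_1$, $P_2$ are not
Eckardt, regular in codimension~$1$. Thus $\tilde{\tau_P}$ acts by a well-defined linear
automorphism on $\Pic(\tilde X)$, and the displayed transformation formulas for
$\mu(\chi\circ\tau_P)$, $\nu_P$, $\nu_{P_1}$, $\nu_{P_2}$, $\nu_L$ are nothing but the
components of this linear map read off in the natural basis.

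First I would fix the basis of $\Pic(\tilde X)\otimes\Q$ consisting of the pullback $H$ of the
hyperplane class, the exceptional divisors $E_P$, $E_{P_1}$, $E_{P_2}$ over the three nodes,
and the exceptional divisor $E_L$ over $L$; the degree $\mu$ and the multiplicities
$\nu_P,\nu_{P_1},\nu_{P_2},\nu_L$ are then the coefficients with which a general member of
$\H(\chi)$ is expressed against the dual classes. Next I would determine how $\tilde{\tau_P}$
permutes or mixes these classes. Since $\tau_P$ is the Galois involution of the projection from
$P$, it preserves each elliptic fibration associated with $L$ and acts fiberwise as a
reflection in the section $-\tfrac{E_P}{2}$ (Remark~\ref{remark:all-on-elliptic-fibration});
in particular it fixes the fibration structure, hence fixes $E_L$ up to the standard
correction, and it exchanges the two residual sections coming from $P_1$ and $P_2$ while fixing
the one coming from $P$. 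This geometric description dictates the shape of the matrix: $P$
behaves as the center (giving the $3\mu-2\nu_P$, $4\mu-3\nu_P$ pattern familiar from
Lemma~\ref{lemma:point-action-1-point}), the pair $(P_1,P_2)$ gets swapped (explaining the
symmetric $\mu-\nu_P+\nu_{P_j}$ formulas), and $L$ acquires the coefficient $2$ in front of
$\mu-\nu_P$ because $L$ now meets two of the exchanged sections rather than one.

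Concretely, I would recycle the computation of \cite[Lemma~5.1.3]{IskovskikhPukhlikov} for the
action on $H$, $E_P$, $E_L$, and then track the two extra exceptional classes $E_{P_1}$,
$E_{P_2}$ through the involution, using that the linear equivalences relating the residual
conic, the reflected section and these exceptional divisors are the same ones already used in
Lemmas~\ref{lemma:point-action-2-points} and~\ref{lemma:point-action-3-points}. Indeed the
statement here differs from Lemma~\ref{lemma:point-action-3-points}'s ancestor only in the
presence of the third collinear node, so the bookkeeping is an incremental extension of the
two-point case: the formulas for $\mu(\chi\circ\tau_P)$ and $\nu_P(\chi\circ\tau_P)$ are
literally unchanged, the two middle formulas are the obvious symmetrization, and only the last
formula for $\nu_L$ needs the factor $2$ recomputed.

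The main obstacle, and the only point requiring genuine care rather than transcription, is
verifying that $\tilde{\tau_P}$ really is regular in codimension~$1$ on this particular
$\tilde X$ when $L$ carries three singular points. The earlier lemmas invoked
Remark~\ref{remark:codim-2} under the standing assumption that $L$ contains at most two nodes
and is not an Eckardt line; here $L$ contains three nodes, so by
Lemma~\ref{lemma:tangent-plane} there is a hyperplane tangent to $X$ along $L$ and the plane
sections through $L$ behave differently. Thus I would first check that, away from $L$, the
residual cubics in the pencil of plane sections through a general point of $L$ remain
irreducible with at worst a node at one of the fixed singular points, so that the fiberwise
Galois involution of the projection from $P$ extends over the generic point of every
exceptional divisor of $\tilde X$. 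Once this codimension-one regularity is secured, the matrix
of $\tilde{\tau_P}$ on $\Pic(\tilde X)\otimes\Q$ is forced by its eigenvalue structure (it is
an involution fixing the fibration class), and the five displayed formulas drop out by reading
off the columns; this final step is the routine calculation I would not grind through in
detail.
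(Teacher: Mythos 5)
Your overall skeleton is the same as the paper's: blow up $P$, $P_1$, $P_2$ and then the strict transform of $L$, note that $\tilde{\tau_P}$ is regular in codimension one, and compute its action on $\Pic(\tilde{X})$ following \cite[Lemma~5.1.3]{IskovskikhPukhlikov}. But one of your geometric inputs is false: $\tau_P$ does \emph{not} fix the section coming from $P$. By your own (correct) description it acts on a general fiber $C$ (the residual plane cubic, which passes through $P$, $P_1$, $P_2$) as the reflection $x\mapsto -x-P$, and this sends $P$ to $-2P$; since $P+P_1+P_2=0$ on $C$, the correct relation is $\tilde{\tau_P}^{*}e\equiv e_1+e_2-e$ modulo vertical classes (in the paper's notation), not $\tilde{\tau_P}^{*}e\equiv e$. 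Had the section over $P$ been fixed, you would obtain $\nu_P(\chi\circ\tau_P)=\nu_P(\chi)+\dots$ instead of $4\mu(\chi)-3\nu_P(\chi)$, so the picture from which you propose to ``read off the columns'' contradicts the very formulas being proved.

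The second, more serious gap is that the fiberwise reflection picture determines $\tilde{\tau_P}^{*}$ only modulo the kernel $K$ of the restriction map $\Pic(\tilde{X})\to\Pic(C)$, and everything that is new in the three-node case happens inside $K$. Because $L$ contains three nodes, Lemma~\ref{lemma:tangent-plane} produces a hyperplane tangent to $X$ along $L$; as a result $e_L$ becomes a \emph{vertical} divisor (it lies over a line in $\P^2$, being swept out by components of the reducible fibers coming from plane sections $2L+Q$), the divisor swept out by the residual conics is vertical as well, and $K$ has rank two, generated by $h-e-e_1-e_2-e_L$ and $e_L$. This is exactly the structural difference to which the paper's proof is devoted, whereas you invoke the tangent hyperplane only to discuss codimension-one regularity (a legitimate concern, since Remark~\ref{remark:codim-2} is stated for lines with at most two nodes, but a comparatively minor one). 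The coefficients of $\tilde{\tau_P}^{*}h$, $\tilde{\tau_P}^{*}e$, $\tilde{\tau_P}^{*}e_L$ along $K$ are not ``forced by eigenvalue structure'': for example, $\tilde{\tau_P}^{*}e_L=-e_L+k_1(h-e-e_1-e_2-e_L)$ defines, for every $k_1$, an involution of the lattice that fixes the fiber class and induces the correct action on $\Pic(C)$. These coefficients have to be computed\t the paper pins them down by intersection numbers on the preimage $S$ of a general line in $\P^2$\t and they are precisely what yield $\nu_L(\chi\circ\tau_P)=2\mu(\chi)-2\nu_P(\chi)+\nu_L(\chi)$; your heuristic that ``$L$ now meets two of the exchanged sections'' is not a derivation of this coefficient. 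So the step you set aside as routine is where the lemma actually lives.
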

\begin{proof}
Analogous to that of Lemma~\ref{lemma:point-action-1-point}. We give a 
sketch to highlight some minor differences.

Let $\tilde{X}$ be the blow-up of $X$ in $P$, $P_1$, $P_2$ 
and then the strict transform of $L$, and $\tilde{\tau_P}$ the corresponding
(birational) involution of $\tilde{X}$ (note that $\tilde{\tau_P}$
is regular in codimension $1$ by~\ref{remark:codim-2}).
Let $h$ denote the class of a pull-back of a hyperplane section of $X$ in
$\Pic(\tilde{X})$, and let $e$, $e_1$, $e_2$ and $e_L$ denote the classes of
(the preimages of) exceptional divisors.
Note that $\tilde{X}$ has a  
structure of an elliptic fibration $\psi:\tilde{X}\to\P^2$. Let $C$ be a 
general fiber of $\psi$, and $S$ the preimage of a general line in $\P^2$.
Then the kernel $K$ of the restriction map $\Pic(\tilde{X})\to\Pic(C)$ 
is generated 
by $h-e-e_1-e_2-e_L$ and $e_L$: indeed, $K$ is generated by the preimage 
of a general line in $\P^2$ (that is $h-e-e_1-e_2-e_L$) and divisors 
swept out by the components of reducible fibers; one of the latter is $e_L$, 
and another is swept out by conics and is equivalent to $h-2e-e_1-e_2-e_L$ 
since a general conic is contained in a hyperplane section $H\subset X$ 
tangent to $X$ along $L$ and 
$\mult_L H=2$ by Lemma~\ref{lemma:tangent-plane}.
The remaining computations are analogous to those 
of~\cite[Lemma~5.1.3]{IskovskikhPukhlikov}. Restricting to $C$, one gets
\begin{gather*}
\tilde{\tau_P}^*h=3(e_1+e_2)-h+m_1(h-e-e_1-e_2-e_L)+m_2e_L,\\
\tilde{\tau_P}^*e=e_1+e_2-e+n_1(h-e-e_1-e_2-e_L)+n_2e_L,\\
\tilde{\tau_P}^*e_L=e_L+k_1(h-e-e_1-e_2-e_L)+k_2e_L,\\
\tilde{\tau_P}^*e_1=e_2+l_1(h-e-e_1-e_2-e_L)+l_2e_L,\\
\tilde{\tau_P}^*e_2=e_1+l_1(h-e-e_1-e_2-e_L)+l_2e_L.
\end{gather*}
Computing intersection numbers on $S$, one obtains that $l_1=l_2=0$,
$n_2=0$, $n_1=2$, $k_1=k_2=0$, $m_1=4$, $m_2=2$, and the statement follows.
\end{proof}


\section{Regularization}
\label{section:regularisation}

In this section we describe the cases when the birational 
involutions of $X$ become regular. These effects are analogous to 
regularization of birational involutions of minimal cubic surfaces 
arising from Eckardt points.

The following example shows that birational involutions of both types may 
regularize on $X$.

\begin{example}[{cf.~\cite[7.4.2]{CPR}}]\label{example:Eckardt-point}
Let $X\subset\P^4$ be given by equation
\begin{equation}\label{equation:Eckardt-point}
w^2q_2(x, y, z, t)+q_4(x, y, z, t)=0,
\end{equation}
where $(x:y:z:t:w)$ are homogeneous coordinates in $\P^4$ and $q_i$ is
a form of degree $i$. Let $P=(0:0:0:0:1)$; note that $P$ is a singular point on
$X$, and $X$ contains the cone $q_2=q_4=0$ with its vertex at $P$. 
 
Let $L\subset X$ be a line passing through $P$ such that $L$ contains 
no singular points of $X$ except $P$. 
It is easy to see that the involution $\tau_P$ is regular and acts as
$$\iota:(x:y:z:t:w)\mapsto (x:y:z:t:-w).$$ 
Moreover, let $\Pi$ be a general plane 
containing $L$, so that $\left. X\right|_{\Pi}=L\cup C$; then 
$C$ is a nonsingular plane cubic, and $P\in C$ is an inflection point, 
so the involutions $\tau_L$ and $\tau_P$
coincide on $C$ 
(and hence on $X$), and so $\tau_L$ is also regular on $X$.

If $q_4$ is sufficiently general, $P$ is a node and, moreover, the only singular
point on $X$. The latter implies that $X$ is factorial 
by~\cite[Theorem~1.2]{Cheltsov-quartic} 
(in particular, $X$ is birationally superrigid
by Theorem~\ref{theorem:Mella-rigid} and the previous argument).

If $X$ is given by equation
$$w^2(xy+zt)-(x^3y+y^4+z^4+t^4),$$
then $X$ is singular exactly in three collinear (ordinary double) points:
$P'=(1:0:0:0:1)$, $P''=(-1:0:0:0:1)$ and $P$. In particular,
$X$ is factorial by~\cite[Theorem~1.2]{Cheltsov-quartic} 
(and hence birationally rigid by Theorem~\ref{theorem:Mella-rigid}).
\end{example}

\begin{example}\label{example:Eckardt-line-regularisation}
Let $L\subset X$ be a line such that there are infinitely many lines 
contained in $X$ that intersect $L$ in smooth points of $X$. Then 
the involution $\tau_L$ is regular (provided that it is defined, i.\,e. 
$L$ contains one or two singular points of $X$). Indeed, assume that 
$\tau_L$ is not regular on $X$. Then there is
a mobile linear system $\H\subset|-\mu K_X|$
such that $L$ is a non-canonical center with respect to
$\frac{1}{\mu}\H$ 
(one can take $\H=(\tau_L)_*^{-1}|\O(1)|$), i.\,e. $\mult_L\H>\mu$.
In particular, $\mult_P\H>\nlb\mu$, and hence all lines passing through $P$
are contained in $\Bs\H$, a~contradiction. 
\end{example}

The next example shows that there are factorial nodal quartics containing lines
of the type described in Example~\ref{example:Eckardt-line-regularisation}.

\begin{example}\label{example:smooth-Eckardt-point}
Let $X\subset\P^4$ be given by the equation
$$
w^3x+wx(xy+zt)+(x^4+y^4+z^4+tz^3)=0.
$$
Then $P=(0:0:0:0:1)$ is the vertex of a two-dimensional cone contained in 
$X$, and the only singular point of $X$ is a node at $P'=(0:0:0:1:0)$;
in particular, $X$ is factorial by~\cite[Theorem~1.2]{Cheltsov-quartic}.
The line $L=\ang{P, P'}$ is contained in $X$ and fits 
into the setting of Example~\ref{example:Eckardt-line-regularisation}.
\end{example}

\begin{remark}\label{remark:Eckardt-point-easy}
If $P\in\Sing X$ is a point such that there are infinitely many lines 
contained in $X$ and passing through $P$, one could also argue as in 
Example~\ref{example:Eckardt-line-regularisation} using 
Theorem~\ref{theorem:sing-mult} to show that $P$ cannot
be a non-canonical center and hence $\tau_P$
is regular. 
\end{remark}

We will see below that Examples~\ref{example:Eckardt-point} 
and~\ref{example:Eckardt-line-regularisation}
describe (at least to some extent) the general situation.

\begin{lemma}\label{lemma:Eckardt-point-equation}
Let $X$ have a singular Eckardt point $P$. 
Then $X$ is given by an equation
of type~\ref{equation:Eckardt-point}; moreover, any line $L\subset X$ 
passing through
$P$ contains either one or three singular points of $X$.
\end{lemma}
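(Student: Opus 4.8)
The plan is to combine the local structure of $X$ at the node $P$ with the global constraint coming from factoriality (Lemma~\ref{lemma:Lefschetz}).

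First I would fix homogeneous coordinates so that $P=(0:0:0:0:1)$ and write the equation of $X$ as $F=a_2w^2+a_3w+a_4$, where $a_i=a_i(x,y,z,t)$ is a form of degree $i$: indeed, vanishing of $F$ and of its partial derivatives at $P$ forces the coefficients of $w^4$ and $w^3$ to disappear, and since $P$ is a node the quadratic form $a_2$ is nondegenerate, so $Q=\{a_2=0\}\subset\P^3$ is a smooth quadric. A line through $P$ in direction $\gamma\in\P^3$ lies on $X$ exactly when $a_2(\gamma)=a_3(\gamma)=a_4(\gamma)=0$; hence the hypothesis that $P$ is the vertex of a two-dimensional cone contained in $X$ (Definition~\ref{definition:Eckardt-point}) means precisely that the locus $\{a_2=a_3=a_4=0\}\subset\P^3$ contains an irreducible curve $\Gamma$, which automatically lies on $Q$.

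The key step is to prove that $a_2$ divides $a_3$, equivalently that $a_3$ vanishes identically on $Q$; once this is known, writing $a_3=a_2m$ and performing the substitution $w\mapsto w-\tfrac{1}{2}m$ clears the term linear in $w$ and puts $F$ into the shape~\ref{equation:Eckardt-point}. To establish the divisibility I would argue by contradiction using factoriality. Let $C$ be the cone over $\Gamma$; it is an effective Weil divisor on $X$, so by Lemma~\ref{lemma:Lefschetz} its degree — which equals $\deg\Gamma$ — is divisible by $4$. If $a_3|_Q\neq0$, then $\Gamma$ is contained in the curve $\{a_3|_Q=0\}$ of class $(3,3)$ on $Q\iso\P^1\times\P^1$, so $\deg\Gamma\le6$ and therefore $\deg C=4$. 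Then Lemma~\ref{lemma:Lefschetz} cuts $C$ out by a hyperplane $\{G=0\}$; since $P\in C$ the linear form $G$ vanishes at $P$, hence involves no $w$, so $\Gamma$ lies both on $Q$ and in a plane of $\P^3$ and thus has degree at most $2$ — a contradiction. This degree bookkeeping, balancing divisibility by $4$ forced by factoriality against the $(3,3)$ bound coming from $a_3$, is the part I expect to require the most care; the main obstacle is ruling out the borderline value $\deg\Gamma=4$ cleanly, which is exactly what the hyperplane argument achieves.

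For the final assertion I would use the regular involution $\iota\colon(x:y:z:t:w)\mapsto(x:y:z:t:-w)$, which preserves $X$ once $F=a_2w^2+a_4$. Given a line $L\subset X$ through $P$ in direction $\gamma$, the involution $\iota$ permutes $\Sing X\cap L$ and acts on $L$ by $w\mapsto-w$, whose only fixed points on $L$ are $P$ and the point $N_0$ with coordinate $w=0$. Every singular point of $X$ on $L$ other than these two occurs in an $\iota$-pair, so $|\Sing X\cap L|$ is odd unless $N_0$ is singular. To exclude the latter I would compute the tangent cone of $X$ at $N_0$: assuming $N_0$ singular forces $\gamma\in\Sing\{a_4=0\}$, and in suitable affine coordinates $(y,z,t,w)$ centered at $N_0$ the term $a_2w^2$ contributes only in degree $\ge3$ (since $a_2(\gamma)=0$), so the lowest-degree part of $F$ is the quadratic part of $a_4$, which involves only $y,z,t$ and not $w$ and hence has rank at most $3$. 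Such a point is not an ordinary double point, contradicting nodality of $X$. Thus $N_0$ is smooth, $|\Sing X\cap L|$ is odd, and since a quartic with isolated singularities has at most three collinear singular points (Remark~\ref{remark:no-line-3-points}) this number equals $1$ or $3$.
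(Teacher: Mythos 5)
Your proof is correct and follows essentially the same route as the paper: the same Lefschetz-divisibility argument (degree of the cone divisible by $4$, forcing degree $4$, then the hyperplane cutting it out forces the base curve into a plane, contradicting that it lies on a smooth quadric with degree $4$), and the same use of the regular involution $w\mapsto -w$ together with degeneracy of the Hessian at points of $L$ with $w=0$ to pair up singular points. The only cosmetic difference is that you phrase the second part as a parity count of $\Sing X\cap L$, while the paper directly produces a third singular point $\tau_P(P')$ from a second one; these are the same argument.
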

\begin{proof}
Let $(x:y:z:t:w)$ be homogeneous coordinates in $\P^4$ such that 
$P=(0:0:0:0:1)$. Then $X$ is given by  
\begin{equation}\label{equation:pre-Eckardt-point}
w^2q_2(x, y, z, t)+wq_3(x, y, z, t)+q_4(x, y, z, t)=0,
\end{equation}
where $q_i$ is a form of degree $i$. 

Assume that $q_3$ is not divisible by $q_2$. The equation $q_2=0$ defines a 
nonsingular quadric surface in $\P=(w=0)\iso\P^3$. By assumption the curves 
cut out on this quadric by $q_3=0$ and $q_4=\nlb 0$ have a common 
(irreducible) component $F$ (so that $K$ is a cone over~$F$). By the Lefschetz 
theorem $\deg K$ must be divisible 
by $4$; since $\deg K=\deg F\le\nlb 6$, the only possible case is $\deg F=4$,
i.\,e. $F$ is an irreducible curve of type $(2, 2)$. In the latter case $K$
is cut out on $X$ by a hyperplane (again by the Lefschetz theorem), and hence
$F\subset\P$ is contained in a plane, a contradiction. 

So $q_3=q_2\cdot l$ for some linear form $l$, and replacing 
$w$ by $w+\frac{l}{2}$ we may assume that $q_3=0$ 
and $K$ is given by equations $q_2=q_4=0$.

Now assume that a line $L\subset X$ passing through $P$ 
contains a point $P'\in\Sing X$ different from $P$.
Let $P'=(x':y':z':t':w')$. If $w'= 0$, then we may assume that 
$P'=(1:0:0:0:0)$, so that $y$, $z$, $t$ and $w$ are local 
coordinates in an affine neighborhood of $P'$. Note that  
all second partial derivatives of the left hand side
of~\ref{equation:Eckardt-point} with respect to $w$ and some other
coordinate out of $y$, $z$, $t$, $w$ vanish at $P'$ (since $q_2$ does), 
so $P'$ cannot be an ordinary double point of $X$. Hence $w'\neq 0$,
and the point 
$$P''=\tau_P(P')=(x':y':z':t':-w')$$ 
is different from $P$ and $P'$; it lies on $L$ and is singular on $X$.
\end{proof}

Now we will analyze the cases when the involutions $\tau_P$ and $\tau_L$
are regular.

\begin{lemma}\label{lemma:line-regularisation}
Let $L\subset X$ be a line passing through one or two singular
points of $X$. 
Assume that $L$ is not an Eckardt line.
Then the involution $\tau_L$ is not regular.
\end{lemma}
\begin{proof}
This follows from Lemma~\ref{lemma:line-1-point-action} in the case of one 
singular point on $L$ and from Lemma~\ref{lemma:line-2-points-action}
in the case of two singular points.
\end{proof}

\begin{lemma}\label{lemma:point-regularisation}
Let $P\in\Sing X$. Then the involution $\tau_P$ is regular if and only if
$P$ is an Eckardt point on $X$.
\end{lemma}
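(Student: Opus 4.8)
The plan is to prove the two implications separately, in both cases exploiting the normal form for $X$ near a singular Eckardt point furnished by Lemma~\ref{lemma:Eckardt-point-equation}. The ``if'' direction is essentially already done: if $P$ is an Eckardt point, then by Lemma~\ref{lemma:Eckardt-point-equation} the quartic $X$ can be written in the shape~\ref{equation:Eckardt-point}, and Example~\ref{example:Eckardt-point} exhibits $\tau_P$ in these coordinates as the regular projective involution $(x:y:z:t:w)\mapsto(x:y:z:t:-w)$. So I would dispose of this direction in one sentence, citing Lemma~\ref{lemma:Eckardt-point-equation} and Example~\ref{example:Eckardt-point}.

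The substance is the ``only if'' direction, and the main step \t which I expect to be the crux \t is to show that a \emph{regular} $\tau_P$ is necessarily \emph{linear}. First I would note that a regular involution is a biregular automorphism: since $\tau_P$ is a morphism and $\tau_P\circ\tau_P=\id$ as rational maps, the two morphisms agree on a dense open set and hence everywhere, so $\tau_P$ has a regular inverse and $\tau_P\in\Aut(X)$. Next, factoriality together with the Lefschetz theorem (Lemma~\ref{lemma:Lefschetz}) gives $\Pic(X)=\Z\cdot\O_X(1)$, so every automorphism of $X$ preserves $\O_X(1)$ and is therefore induced by a projective transformation of $\P^4$; in particular $\tau_P$ is linear. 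This reduction is the key point: once linearity is in hand, the rest is a short explicit computation.

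To finish, I would place $P=(0:0:0:0:1)$ and write $X$ in the form~\ref{equation:pre-Eckardt-point}, i.e. $w^2q_2+wq_3+q_4=0$, where $q_2$ is nondegenerate because $P$ is a node. As the Galois involution of the projection from $P$, the map $\tau_P$ fixes $P$ and commutes with $(x:y:z:t:w)\mapsto(x:y:z:t)$, so a linear $\tau_P$ must act as $(x:y:z:t:w)\mapsto(x:y:z:t:\alpha w+\ell)$ for a scalar $\alpha$ and a linear form $\ell=\ell(x,y,z,t)$. Iterating and using that $\tau_P$ is a nontrivial involution forces $\alpha=-1$ (the case $\alpha=1$ gives $\ell=0$ and $\tau_P=\id$). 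Imposing $\tau_P(X)=X$ on the equation then yields $q_3=-q_2\ell$, so $q_2\mid q_3$; completing the square $w\mapsto w-\tfrac{\ell}{2}$ as in the proof of Lemma~\ref{lemma:Eckardt-point-equation} brings $X$ to the Eckardt form~\ref{equation:Eckardt-point}. Finally $X$ then contains the two-dimensional cone $\{q_2=q_4=0\}$ with vertex $P$ (its base $\{q_2=q_4=0\}\subset\P^3$ is a curve, being the intersection of two surfaces), so $P$ is an Eckardt point by Definition~\ref{definition:Eckardt-point}, as required.
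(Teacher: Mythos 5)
Your proof is correct, but your ``only if'' direction takes a genuinely different route from the paper's. The paper argues contrapositively and geometrically: if $P$ is not an Eckardt point, then a general line $L$ through $P$ with $\mult_P\left(\left.X\right|_L\right)\ge 3$ is not contained in $X$ and has contact exactly $3$ at $P$, hence meets $X$ in a single further point $P_L$; since $\tau_P$ sends each such $P_L$ to $P$, the divisor swept out by these points is contracted to $P$, so $\tau_P$ cannot be regular. You instead linearize: regular $\Rightarrow$ biregular $\Rightarrow$ (via Lemma~\ref{lemma:Lefschetz} and factoriality, giving $\Pic(X)=\Z\cdot\O_X(1)$) induced by a projective transformation of $\P^4$, after which an explicit computation forces the equation into the Eckardt shape~\ref{equation:Eckardt-point}. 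Each approach buys something: yours yields the stronger structural fact that a regular $\tau_P$ is linear, and it rederives the normal form of Lemma~\ref{lemma:Eckardt-point-equation} and Example~\ref{example:Eckardt-point} from scratch; the paper's is much shorter and uses neither factoriality nor the Lefschetz theorem in this direction, only the geometry of the projection from $P$. One step of yours deserves an explicit word: to get the shape $(x:y:z:t:w)\mapsto(x:y:z:t:\alpha w+\ell)$ you need the linear extension $T$ of $\tau_P$ to preserve \emph{every} line through $P$ in $\P^4$, not merely the general line through $P$ meeting $X$; this does follow, because the condition $T(x)\in\ang{P,x}$ is cut out by quadrics that vanish on $X$, and an irreducible quartic threefold lies on no quadric, so these quadrics vanish identically. (Similarly, the base $\{q_2=q_4=0\}\subset\P^3$ of your cone has dimension exactly one because $q_2\nmid q_4$, as $X$ is irreducible; in any case a component of positive dimension already suffices to produce the two-dimensional cone required by Definition~\ref{definition:Eckardt-point}.)
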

\begin{proof}
If $P$ is an Eckardt point, $\tau_P$ is regular by
Lemma~\ref{lemma:Eckardt-point-equation} and
Example~\ref{example:Eckardt-point}. Now assume that $P$ is not an Eckardt
point. Then a general line $L\subset\P^4$ such that
$\mult_P\left(\left. X\right|_L\right)\ge 3$ is not contained in $X$,
and $\mult_P\left(\left. X\right|_L\right)=3$. So there is a single
intersection point $P_L\in X\cap L$ that is different from $P$, 
and hence $\tau_P$
is not regular at $P$ (equivalently, one can see that the divisor $D$
swept out by such points $P_L$ maps to $P$ under~$\tau_P$).
\end{proof}

\begin{remark}
If $P$ is a point such that there is a non-Eckardt line $L\subset X$ passing 
through $P$, then one can use Lemmas~\ref{lemma:point-action-1-point},
\ref{lemma:point-action-2-points} and~\ref{lemma:point-action-3-points} to
show that $\tau_P$ is non-regular. Still, the direct proof 
of Lemma~\ref{lemma:point-regularisation} seems more convenient 
since it avoids us having to look for such line passing through~$P$.
\end{remark}

Combining the previous results we get the following.

\begin{corollary}\label{corollary:line-regularisation}
An involution $\tau_L$ is regular if and only if $L$ is an Eckardt line.
\end{corollary}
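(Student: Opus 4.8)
The final statement to prove is Corollary~\ref{corollary:line-regularisation}: an involution $\tau_L$ is regular if and only if $L$ is an Eckardt line.

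The plan is to combine the two preceding results, which already split the line into cases according to whether $L$ is an Eckardt line. First I would establish the ``only if'' direction in contrapositive form: suppose $L$ is not an Eckardt line. Then $L$ passes through one or two singular points of $X$ (it must pass through at least one, since otherwise $\tau_L$ is not defined at all, and a line through three singular points is automatically an Eckardt line by Lemma~\ref{lemma:tangent-plane}, which produces infinitely many lines meeting $L$). Since $L$ is a non-Eckardt line through one or two singular points, Lemma~\ref{lemma:line-regularisation} immediately gives that $\tau_L$ is not regular. This handles one implication.

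For the ``if'' direction, suppose $L$ is an Eckardt line, so there are infinitely many lines on $\Xk$ meeting $L$. This is precisely the hypothesis of Example~\ref{example:Eckardt-line-regularisation}, which shows directly that $\tau_L$ is regular: were it not, one could take $\H=(\tau_L)_*^{-1}|\O(1)|$ as a mobile linear system for which $L$ is a non-canonical center, forcing $\mult_L\H>\mu$ and hence $\mult_P\H>\mu$ at a singular point $P\in L$; this would put every line through $P$ in the base locus $\Bs\H$, contradicting mobility. So the Eckardt assumption forces regularity.

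Assembling the two directions yields the equivalence. The only subtlety\t and the main point requiring a moment of care\t is the three-point case, which is not literally covered by Lemma~\ref{lemma:line-regularisation} (that lemma concerns lines through one or two singular points). Here one uses Lemma~\ref{lemma:tangent-plane}: a line through three singular points admits a hyperplane tangent along it and hence, via condition~$(ii)$, infinitely many plane sections splitting off $2L$, producing infinitely many conics and therefore infinitely many lines meeting $L$; thus such an $L$ is an Eckardt line and falls into the already-treated regular case (indeed $\tau_L=\tau_{P_3}$ by Remark~\ref{remark:no-line-3-points}, and the latter is regular exactly at an Eckardt point). Once this case is folded in, every line through a singular point is accounted for, and the statement follows by simply quoting Lemmas~\ref{lemma:line-regularisation} and~\ref{example:Eckardt-line-regularisation}. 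Since both directions reduce to results already proved, I expect no computational obstacle; the only thing to watch is matching the case division of the two lemmas so that no configuration of $L$ is left uncovered.
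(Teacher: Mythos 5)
Your two-direction skeleton (non-Eckardt case via Lemma~\ref{lemma:line-regularisation}, Eckardt case via a base-locus argument) matches the paper's, but the Eckardt direction has a genuine gap. The hypothesis of Example~\ref{example:Eckardt-line-regularisation} is \emph{not} ``precisely'' the definition of an Eckardt line: the Example requires infinitely many lines meeting $L$ \emph{in smooth points of $X$}, whereas an Eckardt line merely has infinitely many lines meeting it somewhere on $\Xk$\t possibly all at a single singular point of $X$, which happens exactly when $L$ passes through a singular Eckardt point (a vertex of a cone contained in $\Xk$). This is why the paper splits the Eckardt case in two: if $L$ contains a singular Eckardt point, regularity of $\tau_L$ follows from Lemma~\ref{lemma:Eckardt-point-equation} together with Example~\ref{example:Eckardt-point} (there $\tau_L$ coincides with the biregular involution $(x:y:z:t:w)\mapsto(x:y:z:t:-w)$), or alternatively from the argument of Remark~\ref{remark:Eckardt-point-easy}; if instead infinitely many lines meet $L$ in smooth points, Example~\ref{example:Eckardt-line-regularisation} applies. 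Your proof collapses these two subcases into one, and the single argument you substitute does not cover both.

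Concretely, you derive $\mult_L\H>\mu$, pass to $\mult_P\H>\mu$ at \emph{a singular point} $P\in L$, and conclude that all lines through $P$ lie in $\Bs\H$, ``contradicting mobility.'' That is a contradiction only if infinitely many lines of $\Xk$ pass through $P$, which the Eckardt hypothesis does not guarantee: in Example~\ref{example:smooth-Eckardt-point} the cone supplying the infinitely many lines has its vertex at a \emph{smooth} point of $X$ lying on $L$, while the unique singular point of $X$ on $L$ is elsewhere and need not lie on more than finitely many lines\t so your argument yields finitely many base lines and no contradiction, precisely in the situation Example~\ref{example:Eckardt-line-regularisation} was designed for. (Your argument is sound in the complementary subcase, where the singular point $P\in L$ is itself an Eckardt point; that is essentially Remark~\ref{remark:Eckardt-point-easy}.) The repair in the remaining subcase is the Example's own argument: each of the infinitely many lines $\ell$ meets $L$ at a smooth point $Q$ with $\mult_Q\H\ge\mult_L\H>\mu$, forcing $\ell\subset\Bs\H$, and infinitely many such $\ell$ contradict mobility. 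A smaller point: your claim that a line through three singular points is ``automatically an Eckardt line by Lemma~\ref{lemma:tangent-plane}'' is a non sequitur, since condition $(ii)$ of that lemma produces residual \emph{conics}, not lines; but this does not affect the corollary, because $\tau_L$ is only defined (Examples~\ref{example:line-1-point-involution} and~\ref{example:line-2-points-involution}) for lines through one or two singular points, so that case lies outside its scope.
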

\begin{proof}
If $L$ is an Eckardt line then either $L$ contains a singular Eckardt point
or there are infinitely many lines contained in $X$ that intersect $L$ in
smooth points of $X$. In the former case $\tau_L$ is regular by 
Remark~\ref{remark:Eckardt-point-easy} or by
Lemma~\ref{lemma:Eckardt-point-equation} and 
Example~\ref{example:Eckardt-point}. In the latter case $\tau_L$ is regular
by Example~\ref{example:Eckardt-line-regularisation}.

If $L$ is not an Eckardt line then $\tau_L$ is not regular by
Lemma~\ref{lemma:line-regularisation}.
\end{proof} 

\begin{proof}[Proof of Proposition~\ref{proposition:regularisation}.]
See Corollary~\ref{corollary:line-regularisation} and 
Lemma~\ref{lemma:point-regularisation}.
\end{proof}

\begin{remark}\label{remark:why-Mella-not-enough}
In~\cite{Mella} it was proved that a non-canonical center on $X$ is either 
a singular point or a line containing one or two singular points.
As we have seen in this section, the involutions $\tau_P$ and 
$\tau_L$ are untwisting involutions for a point $P$ and a line $L$,
respectively, only if $P$ is not an Eckardt point and $L$ is not an Eckardt 
line. This means that to derive Theorem~\ref{theorem:Mella-rigid} from 
the results of~\cite{Mella} one should check that Eckardt points and lines 
cannot be non-canonical centers. This is done below.

An Eckardt point cannot be a maximal center by 
Remark~\ref{remark:Eckardt-point-easy}. Let $L$ be an Eckardt line.
Then either $L$ contains a singular Eckardt point~$P$,
or there are infinitely many lines contained in $X$ that intersect $L$ in
smooth points of $X$. Assume that $L$ is a non-canonical center
with respect to a normalized mobile linear system $\frac{1}{\mu}\H$. 
In the former case take a general plane section containing $L$ and some line 
passing through~$P$. Then a residual conic $Q$ (that is possibly reducible
but does not contain $L$ as a component) intersects $L$ in 
two smooth points of~$X$ (since $L$ cannot contain exactly two singular 
points by Lemma~\ref{lemma:Eckardt-point-equation}) and hence is contained in 
$\Bs\H$\t contradiction. In the latter case a general line 
intersecting $L$ is contained in $\Bs\H$, which is also a contradiction.
\end{remark}


\section{Non-canonical centers}
\label{section:max-centers}

From now on we denote by 
$\H$ the linear system obtained as in section~\ref{section:preliminaries}.
Recall that by a non-canonical center we mean a non-canonical center of
$\frac{1}{\mu}\H$.

Some of the results of~\cite{Mella} can be summarized as follows.

\begin{theorem}[{see~\cite[Theorem~17]{Mella}}]
\label{theorem:Mella}
A non-canonical center
on $X$ is either a singular point or a line
passing through one or two singular points.
\end{theorem}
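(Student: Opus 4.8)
The plan is to run the Noether--Fano / maximal singularities machinery recalled in section~\ref{section:preliminaries}: assuming the relevant birational map is not an isomorphism, the pair $(X,\frac1\mu\H)$ fails to be canonical, and I want to classify all non-canonical centers $Z$. Since $\H$ is mobile it has no fixed divisorial part, so every such $Z$ has codimension at least two, i.e. $Z$ is either a point or an irreducible curve. A cheap observation disposes of high-degree curves: if $Z=C$ is a curve not contained in $\Sing X$, then the non-canonical condition forces $\mult_C\H>\mu$, and intersecting two general members $D_1,D_2\in\H\subset|{-\mu K_X}|=|\mu H|$ produces an effective $1$-cycle $D_1\cdot D_2\numeq\mu^2H^2$ of degree $4\mu^2$ with $D_1\cdot D_2\ge(\mult_C\H)^2C$; hence $4\mu^2\ge(\mult_C\H)^2\deg C>\mu^2\deg C$ and $\deg C\le 3$.

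Next I would treat points. For a smooth point $P$ the argument is exactly the core of Iskovskikh--Manin superrigidity~\cite{IskovskikhManin}: the $4\mu^2$-inequality asserts that at a smooth non-canonical center one has $\mult_P(D_1\cdot D_2)>4\mu^2$, which is impossible, since the effective $1$-cycle $D_1\cdot D_2$ has degree $4\mu^2$ and the local multiplicity of a $1$-cycle never exceeds its degree. So no smooth point survives. A node $P\in\Sing X$, on the other hand, is genuinely admissible: Theorem~\ref{theorem:sing-mult} only yields $\mult_P\H>\mu$, which is consistent, and these are precisely the centers producing the involutions $\tau_P$; there is nothing to exclude here.

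It then remains to show that among curves only lines through one or two nodes occur. I would pass to a general hyperplane section $S=X\cap H$ containing $C$; by Lemmas~\ref{lemma:general-section} and~\ref{lemma:general-section-mult-line} the surface $S$ is a quartic $K3$ with at worst Du Val singularities at the nodes lying on $C$, and I would carry out the Iskovskikh--Manin surface estimate on the minimal resolution $\tilde S$, where $(H|_S)^2=H^3=4$. The mobile system $\frac1\mu\H|_S$ acquires the points of $C\cap S$ as base points of multiplicity $>1$; combining the Hodge index theorem, adjunction on the $K3$ surface, and the negative semidefiniteness of the resolution graphs via Lemma~\ref{lemma:semi-definite}, one derives a numerical contradiction whenever $\deg C\ge 2$, and also when $\deg C=1$ but $C$ avoids $\Sing X$. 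The key mechanism is that a node on $C$ lowers the relevant self-intersection (through the Du Val contribution of Lemma~\ref{lemma:general-section-mult-line}), and this is exactly what lets a line through one or two nodes slip through while a line through none, a conic, or a cubic cannot. Finally, a line through three nodes is excluded by Lemma~\ref{lemma:tangent-plane} together with Remark~\ref{remark:no-line-3-points}, so the only surviving centers are nodes and lines meeting $\Sing X$ in one or two points.

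The main obstacle is the surface analysis of the last paragraph: bounding $\mult_C\H$ sharply enough to kill conics, cubics, and node-free lines while retaining lines through nodes requires delicate intersection theory on the Du Val $K3$ section, and in particular a correct bookkeeping of how the multiplicity of $\H$ along $C$ redistributes over the exceptional curves of $\tilde S$ at each node on $C$; this is where Lemmas~\ref{lemma:general-section-mult-line} and~\ref{lemma:semi-definite} do the real work. The smooth-point case, by contrast, is entirely reducible to the quoted $4\mu^2$-inequality.
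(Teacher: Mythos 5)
First, a point of comparison: the paper does \emph{not} prove this statement at all. Theorem~\ref{theorem:Mella} is quoted verbatim from~\cite[Theorem~17]{Mella}, and everything in section~\ref{section:max-centers} (Lemmas~\ref{lemma:1-point-and-line}, \ref{lemma:2-spatial-lines}, the proof of Proposition~\ref{proposition:2-centers}) takes it as an input. So your sketch cannot be measured against an internal proof; it has to stand on its own as a proof of Mella's theorem, and judged that way it has a genuine gap. Your first two steps are correct and standard: mobility of $\H$ forces any non-canonical center to be a point or an irreducible curve; the cycle-theoretic bound $4\mu^2=\deg(D_1\cdot D_2)>\mu^2\deg C$ gives $\deg C\le 3$; and smooth points are excluded by the $4\mu^2$-inequality together with $\mult_P Z\le\deg Z$ for effective $1$-cycles.

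The gap is that the third step, which is the actual content of the theorem, is asserted rather than proved. Excluding node-free lines, lines through three nodes, conics and cubics, while retaining lines through one or two nodes, is precisely what occupies Mella, and "Hodge index plus adjunction plus Lemma~\ref{lemma:semi-definite}" is not a proof of it. Three concrete problems: (a) the lemmas you lean on (Lemmas~\ref{lemma:general-section}, \ref{lemma:general-section-mult-line}, \ref{lemma:semi-definite}, \ref{lemma:negative-definite}) were built to exclude \emph{simultaneous} configurations of centers, and their use in the paper presupposes Theorem~\ref{theorem:Mella}; they do not by themselves exclude a single conic or cubic as a center. (b) The natural $K3$ computation your sketch gestures at --- writing $D|_S\numeq\gamma C+M$ with $M$ mobile on a general hyperplane section $S$ through $C$ and deriving $(\mu h-\gamma C)^2<0$ --- does kill node-free lines, but it is \emph{inconclusive} for a line through three nodes: there $C^2=-2+\tfrac32=-\tfrac12$ as a $\Q$-divisor on $S$, and $4\mu^2-2\mu\gamma-\tfrac12\gamma^2$ stays positive for $\gamma$ near $\mu$, so that case really does need the tangent-hyperplane argument of Lemma~\ref{lemma:tangent-plane}; your mechanism ("a node lowers the self-intersection, so only one or two nodes slip through") misdescribes what happens. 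Moreover, citing Remark~\ref{remark:no-line-3-points} here is circular, since that remark itself defers to~\cite{Mella} for exactly this fact. (c) For conics and cubics one must additionally bound how many nodes of $X$ the curve can contain and control the extra singularities that the general section $S$ acquires (this is what Lemmas~\ref{lemma:general-section} and~\ref{lemma:general-section-mult-line} would be needed for); none of that bookkeeping is carried out. As it stands, the proposal is a correct plan whose decisive step is missing.
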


One of the purposes of this section is to prove the following.

\begin{proposition}\label{proposition:2-centers}
Assume that there are at least two non-canonical centers appearing
simultaneously on $X$.
Then there are exactly two of them and they are either
two singular points connected by a line contained in~$X$, or a singular
point and a line containing exactly one more singular point.
\end{proposition}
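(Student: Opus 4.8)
The plan is to suppose that two non-canonical centers $Z_1$ and $Z_2$ appear simultaneously and to derive numerical constraints from the multiplicities $\nu_{Z}(\chi)=\mult_Z\H$, exactly as in the classical quartic case. By Theorem~\ref{theorem:Mella} each center is either a singular point or a line through one or two singular points. The key numerical input is the following: since $\H\subset|-\mu K_X|$ and $-K_X$ is the hyperplane class, a general member $D\in\H$ is a surface of degree $4\mu$, and intersecting two general members $D_1,D_2\in\H$ gives an effective $1$-cycle of degree $D_1\cdot D_2\cdot(-K_X)=4\mu^2$. For a non-canonical center that is a singular (node) point $P$ we have $\mult_P\H>\mu$ by Theorem~\ref{theorem:sing-mult}, while for a non-canonical line $L$ the N\"oether--Fano inequality gives $\mult_L\H>\mu$. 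The main tool will be to compare these local contributions against the global degree $4\mu^2$.

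First I would set up the intersection estimate. Let $D_1,D_2\in\H$ be general and write $Z=D_1\cdot D_2$, an effective curve with $-K_X\cdot Z=4\mu^2$. If $L$ is a non-canonical line (through one or two nodes), then the component of $Z$ supported on $L$ contributes at least $(\mult_L\H)^2$ to the intersection number $D_1\cdot D_2\cdot(-K_X)$ along $L$, since $-K_X\cdot L=1$; more precisely, after blowing up $L$ one controls the excess intersection, and each node on $L$ forces an additional local contribution because of the quadratic behaviour at an ordinary double point (Theorem~\ref{theorem:sing-mult} giving $\mult_P\H>\mu$, and the local intersection at a node being doubled). For a non-canonical singular point $P$ the contribution to $4\mu^2$ is at least $2(\mult_P\H)^2>2\mu^2$, the factor $2$ arising from the local intersection multiplicity of two divisors at a node (the tangent cone is a smooth quadric cone). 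These ``budget'' inequalities are what I would assemble.

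Next, I would run the case analysis by summing the local contributions of the two centers and confronting the total with the global bound $4\mu^2$. If both centers are singular points $P_1,P_2$, each contributes $>2\mu^2$, so together $>4\mu^2$, which is already the whole budget; equality-type analysis forces that the only way both can occur is if the two points are joined by a line $L\subset X$ (so that $Z$ degenerates along that line and the contributions overlap rather than add), which is the first alternative of the Proposition. If one center is a point $P$ and the other a line $L$, the contributions $>2\mu^2$ from $P$ and $>\mu^2$ (or more, depending on the number of nodes on $L$) from $L$ must fit into $4\mu^2$; tracking the nodes on $L$ via Theorem~\ref{theorem:sing-mult} shows $L$ can contain at most one extra node beyond any shared with $P$, yielding the second alternative. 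The cases of two lines, or of a point plus a line through two nodes not meeting the point, are eliminated because their combined contributions exceed $4\mu^2$. Throughout, whenever a center lies on a line contained in $X$ one also uses the description of fibers of the projection (as in Remark~\ref{remark:codim-2}) to see that the two centers must be geometrically incident.

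The hard part will be making the local intersection estimates at the nodes rigorous and extracting the incidence conclusions from the near-equality cases. At a node the naive inequality $D_1\cdot D_2\ge 2(\mult_P\H)^2$ requires care because the point is singular on $X$, so one must work on a small resolution or on the blow-up $\tilde X\to X$ and bound the strict-transform intersection together with the exceptional contribution; controlling the cross-terms between a line $L$ and a node $P\in L$ (where the contributions genuinely overlap) is the delicate bookkeeping. I expect that organizing these estimates so that the ``two disjoint centers'' configurations are strictly over budget while exactly the two listed incident configurations sit at the boundary is the crux of the argument; once the budget inequalities are pinned down, the enumeration of surviving configurations is routine.
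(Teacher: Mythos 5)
Your budget inequalities do reproduce the elementary half of the paper's argument (Lemmas~\ref{lemma:2-points}, \ref{lemma:3-points}, \ref{lemma:2-points-and-line}, \ref{lemma:1-point-and-line}, \ref{lemma:2-spatial-lines}: two point-centers force the joining line into $X$, three collinear points and skew lines are excluded, a point plus a line through it forces exactly one extra node). But the crux of Proposition~\ref{proposition:2-centers} is the exclusion of three \emph{non-collinear} points, of two \emph{coplanar} lines, and of a point plus a line \emph{not} containing it, and here your counting simply is not over budget: a non-canonical node contributes $>2\mu^2$, while a non-canonical line $L$ contributes only $>\mu^2$ along itself (since $\deg L=1$), so ``point $+$ disjoint line'' gives $>3\mu^2$ and ``two lines'' gives $>2\mu^2$, both comfortably inside $4\mu^2$. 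For three non-collinear points the nominal total $6\mu^2>4\mu^2$ cannot be realized either: every hyperplane through the three points contains the plane $\Pi_0$ they span, hence contains the lines $\ang{P_i,P_j}\subset X$, and these lines may be base curves of $\H$, so the local contributions at the $P_i$ do not add up inside $D_1D_2H'$. Worse, multiplicities at nodes do not dominate multiplicities along curves through them: for a node $P$ on a line $L$ one only has $\mult_P\H\ge\frac{1}{2}\mult_L\H$ (a hypersurface section tangent to $X$ along $L$ attains this), which is exactly why the paper's Lemma~\ref{lemma:2-spatial-lines} uses the bound $\mu/2$ at possibly singular points. The ``delicate bookkeeping'' you defer to the end is therefore not bookkeeping at all; it is the place where the local-multiplicity approach breaks down.

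The paper closes these cases (Corollaries~\ref{corollary:3-points}, \ref{corollary:2-lines} and~\ref{corollary:line-and-point}) by a genuinely different, global argument: take the plane $\Pi_0$ spanned by the two putative centers, a general hyperplane section $H\supset\Pi_0$ (a K3 surface whose Du Val singularities are controlled by Lemmas~\ref{lemma:general-section} and~\ref{lemma:general-section-mult-line}), pass to its minimal resolution $\tilde{H}$, and write $\left.\H\right|_{\tilde{H}}$ as a mobile part plus a fixed part supported on the $(-2)$-curves lying over $X\cap\Pi_0$ and the resolved nodes. Lemma~\ref{lemma:negative-definite} (resting on Lemma~\ref{lemma:semi-definite} and a case-by-case Dynkin-diagram analysis of the dual graphs) then shows the fixed part forces a sign contradiction in the intersection form whenever all the putative centers have multiplicity $>\mu$. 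This lattice-theoretic step handles precisely the configurations in which base curves of $\H$ lie inside $\Pi_0\cap X$, which your approach cannot see; without it, or some substitute for it, the proof is incomplete.
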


\begin{remark}
By Theorem~\ref{theorem:sing-mult}
an ordinary double point $P$ is a non-canonical center with respect to
$\frac{1}{\mu}\H$ if and only if $\mult_P\H>\mu$.
The same holds for a line $L\subset X$ (or, more generally, for any curve not 
contained in the singular locus of an ambient variety), since the only extremal 
contraction with center in $L$ is isomorphic to the blow-up of $X$ along $L$
in a neighborhood of a general point of $L$. 
\end{remark}

\begin{lemma}
\label{lemma:2-points}
If the points $P_1$ and $P_2$ are non-canonical centers then the line
$L=\ang{P_1, P_2}$ is contained in $X$.
\end{lemma}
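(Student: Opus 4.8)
The plan is to argue by contradiction, converting the two multiplicity conditions into a numerical inequality on an auxiliary $1$-cycle. By the remark preceding the lemma the hypotheses mean $\mult_{P_1}\H=:m_1>\mu$ and $\mult_{P_2}\H=:m_2>\mu$. Suppose the line $L=\ang{P_1,P_2}$ is not contained in $X$, and pick two general members $D_1,D_2\in\H$. Since $\H$ is mobile, $D_1$ and $D_2$ share no component, so $Z=D_1\cdot D_2$ is a well-defined effective $1$-cycle on $X$. As $\H\subset|-\mu K_X|$ and $(-K_X)^3=\deg X=4$, one has $Z\numeq\mu^2(-K_X)^2$, whence $\deg Z=Z\cdot(-K_X)=4\mu^2$.

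First I would compute the multiplicity of $Z$ at each node. Let $\pi\colon\tilde{X}\to X$ be the blow-up of $X$ at $P_i$ with exceptional divisor $E=E_i$; since $P_i$ is a node, $E$ is a smooth quadric surface, and one has the standard intersection numbers $\pi^*(-K_X)^2\cdot E=0$, $\pi^*(-K_X)\cdot E^2=0$ and $E^3=2$ (equivalently $N_{E/\tilde{X}}=\O(-1,-1)$). Writing $\pi^*D_j=\tilde{D}_j+m_iE$ for the proper transform $\tilde{D}_j\numeq\pi^*(\mu(-K_X))-m_iE$, and noting that for general $D_1,D_2$ the intersection $\tilde{D}_1\cap\tilde{D}_2$ carries no curve lying in $E$, the two cross terms vanish and the local multiplicity of $Z$ at $P_i$ is
$$
\mult_{P_i}Z=\tilde{D}_1\cdot\tilde{D}_2\cdot E
=\big(\pi^*(\mu(-K_X))-m_iE\big)^2\cdot E=m_i^2\,E^3=2m_i^2 .
$$

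Finally I would sum these contributions against a general hyperplane through $L$. Choose a general hyperplane $\Gamma\subset\P^4$ containing $L$ (and hence $P_1$ and $P_2$). A component of $Z$ lies in every such $\Gamma$ only if it is contained in $L$; but $Z\subset X$ while $L\not\subset X$, so $\Gamma$ may be chosen to contain no component of $Z$. Then $Z\cdot\Gamma$ is a finite scheme, and since $\Gamma$ is smooth and passes through each $P_i$ we get
$$
4\mu^2=\deg Z=Z\cdot\Gamma\ge (Z\cdot\Gamma)_{P_1}+(Z\cdot\Gamma)_{P_2}\ge\mult_{P_1}Z+\mult_{P_2}Z=2m_1^2+2m_2^2>4\mu^2 ,
$$
a contradiction; therefore $L\subset X$.

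I expect the main obstacle to be the local multiplicity computation at the nodes: one must justify that $E$ is a smooth quadric with $E^3=2$, that $\pi^*(-K_X)$ restricts trivially to $E$, and that for general $D_1,D_2$ no spurious one-dimensional component of $\tilde{D}_1\cap\tilde{D}_2$ lies in $E$, so that $\tilde{D}_1\cdot\tilde{D}_2\cdot E$ genuinely computes $\mult_{P_i}Z$ rather than overcounting. The remaining ingredients\t the degree count $\deg Z=4\mu^2$ and the choice of $\Gamma$ avoiding the finitely many components of $Z$\t are routine.
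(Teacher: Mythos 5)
Your proof is correct and is essentially the paper's own argument: the paper intersects $D_1\cdot D_2$ against a general member $H'$ of $|H-P_1-P_2|$ (equivalently, a general hyperplane through $L$), uses $L\not\subset X$ to see that $H'$ avoids the base curves of $\H$, and concludes $4\mu^2=D_1D_2H'\ge (D_1D_2H')_{P_1}+(D_1D_2H')_{P_2}>2\mu^2+2\mu^2$, which is exactly your degree count combined with your local estimate $2m_i^2>2\mu^2$ at each node (the factor $E^3=2$ being left implicit there via the citation of Theorem~\ref{theorem:sing-mult}). The obstacle you single out is in fact harmless: writing $\tilde{D}_1\cdot\tilde{D}_2=\tilde{Z}+W$ with $\tilde{Z}$ the proper transform of $Z$ and $W$ an effective $1$-cycle supported on $E$, one has $W\cdot E\le 0$ because $\O_{\tilde{X}}(E)|_{E}\iso\O(-1,-1)$, so $\mult_{P_i}Z=2m_i^2-W\cdot E\ge 2m_i^2$ holds in any case, and this inequality is all your contradiction needs.
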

\begin{proof}
Assume that $L\not\subset X$. Let $H'$ be a general member of the
linear system $|H-P_1-P_2|$. Then $H'$ does not contain any base curves of
$\H$ and for general $D_1, D_2\in\H$ the local intersection index
$(D_1D_2H')_{P_i}>\nlb 2\mu^2$
by Theorem~\ref{theorem:sing-mult}. Hence
$$4\mu^2=D_1D_2H'\ge (D_1D_1H')_{P_1}+(D_1D_2H')_{P_2} >
2\mu^2+2\mu^2=4\mu^2,$$
a contradiction.
\end{proof}

\begin{lemma}
\label{lemma:3-points}
If the points $P_1$, $P_2$ and $P_3$ are non-canonical 
centers then they are not collinear.
\end{lemma}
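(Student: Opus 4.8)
The plan is to argue by contradiction: assume $P_1,P_2,P_3$ are collinear non-canonical centers of $\frac1\mu\H$. First I would fix the global geometry. By Lemma~\ref{lemma:2-points} the line through any two of them lies in $X$, so all three lie on a single line $L\subset X$; by Remark~\ref{remark:no-line-3-points} these are exactly the singular points on $L$, and $L$ itself is not a non-canonical center, so $\mult_L\H\le\mu$. Since $X$ is factorial we have $\Pic(X)=\Z\cdot\O_X(1)$, hence $\H\subset|\mu\O_X(1)|$ with $\mu$ a positive integer; writing $\nu_i=\mult_{P_i}\H$, Theorem~\ref{theorem:sing-mult} gives $\nu_i>\mu$ for each $i$. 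A useful extra bound comes from Lemma~\ref{lemma:point-action-3-points}: applying the untwisting involution $\tau_{P_i}$ replaces $\mu$ by $3\mu-2\nu_i$, which must stay positive, so $\mu<\nu_i<\tfrac32\mu$.

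The heart of the argument is to move to a surface on which the three nodes and the line interact. By Lemma~\ref{lemma:tangent-plane} there is a hyperplane $H$ tangent to $X$ along $L$ carrying an infinite family of planes $\Pi$ with $\left.X\right|_{\Pi}=2L+Q$, and Lemma~\ref{lemma:general-section-mult-line} (applied with $d=4$, $n=3$, $k=2$) pins down the singularities of the relevant hyperplane section: the only singular points on $L$ are $P_1,P_2,P_3$, each an ordinary double point, and there is a residual conic $C$ with $2L+C$ a plane section. I would restrict $\H$ to such a section $S$ and pass to its minimal resolution $\sigma\colon\tilde S\to S$, on which the strict transforms $\tilde L$ and the three exceptional $(-2)$-curves $E_1,E_2,E_3$ form a $D_4$-configuration ($\tilde L\cdot E_i=1$, $E_i\cdot E_j=0$), whose intersection form is negative definite, while the conic $C$ and the pencil of residual curves record the projection of $X$ from $L$. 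Splitting the restricted system into its fixed part $\ell\,\tilde L$ (with $\ell=\mult_L\H\le\mu$) and its mobile part, whose strict transform $\tilde{\mathcal{M}}$ satisfies $\sigma^*\H=\tilde{\mathcal{M}}+\ell\,\tilde L+\sum_i\tfrac{\nu_i}{2}E_i$, I would compute $\tilde{\mathcal{M}}^2$, $\tilde{\mathcal{M}}\cdot\tilde L$ and $\tilde{\mathcal{M}}\cdot E_i$ in terms of $\mu$, $\ell$ and the $\nu_i$.

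The contradiction is then to be extracted from the non-negativity of these intersection numbers for a mobile system, together with $\nu_i>\mu$, the bound $\nu_i<\tfrac32\mu$, and $\ell\le\mu$: the effective class $\tilde{\mathcal{M}}$ is forced to be supported, up to the mobile pencil coming from the projection, on the negative-definite $D_4$-configuration, and Lemma~\ref{lemma:semi-definite} then yields a degenerate self-intersection incompatible with the simultaneous inequalities $\nu_i>\mu$. I expect the main obstacle to be precisely this final bookkeeping: the general hyperplane section treats the three nodes almost independently and does \emph{not} by itself exclude the configuration, so one must genuinely exploit the tangency of $H$ along $L$ (equivalently the residual conic / conic-bundle structure from Lemma~\ref{lemma:general-section-mult-line}) to sharpen the estimate at the three collinear nodes into a strict inequality. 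Getting the local intersection multiplicities at $P_1,P_2,P_3$ correct on the resolution, and checking that the relevant curve classes really fall into a negative semidefinite sublattice so that Lemma~\ref{lemma:semi-definite} applies, is the delicate step; everything else is the routine degree-and-multiplicity calculus already used in Lemmas~\ref{lemma:line-1-point-action}--\ref{lemma:point-action-3-points}.
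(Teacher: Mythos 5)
Your general template is the right one in spirit---it is the mechanism of the paper's Lemma~\ref{lemma:negative-definite}, and a repaired version of your argument can be made to work---but the proposal breaks at exactly the step you yourself flag as ``delicate'', in three concrete places. (1) The coefficient of $E_i$ in $\sigma^*\bigl(\left.\H\right|_S\bigr)$ is not $\tfrac{\nu_i}{2}$ but at least $\nu_i$: restricting the threefold relation $\pi^*D=\pi^{-1}D+\nu_i\mathcal{E}_i$ (blow-up of the node $P_i\in X$, $D\in\H$ general) to the strict transform of $S$, which meets $\mathcal{E}_i$ in the reduced conic $E_i$, shows that passing to a hyperplane section through the node can only increase the multiplicity, never halve it. This is fatal rather than cosmetic: the whole mechanism needs the $E_i$-coefficients $m_i$ to exceed $\mu$, so that the $E_i$ sit on the effective ``excess'' side of the numerical identity $\tilde{F}+\sum_i(m_i-\mu)E_i\equiv(2\mu-\gamma_L)\tilde{L}+(\mu-\gamma_Q)\tilde{Q}$ on $\tilde{S}$ (here $\gamma_L=\mult_L\H\le\mu$, $\gamma_Q=\mult_Q\H$); with your value $m_i=\nu_i/2$ and your own bound $\nu_i<\tfrac32\mu$ one gets $m_i<\tfrac34\mu<\mu$, the $E_i$ migrate to the other side, and no contradiction follows. (2) The lattice driving the contradiction is misidentified: negative (semi-)definiteness is needed on the span of the curves carrying multiplicity at most $\mu$, namely $\tilde{L}$ (since $L$ is not a center) and the residual conic $\tilde{Q}$---not on the $D_4$ spanned by $\tilde{L},E_1,E_2,E_3$, which mixes the two sides of the identity; the phrase ``$\tilde{\mathcal{M}}$ is supported on the $D_4$-configuration'' has no meaning for a mobile class. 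Moreover, with your tangent plane $\Pi_0$ the relevant span $\{\tilde{L},\tilde{Q}\}$ is only negative \emph{semi}-definite: $\tilde{L}\cdot\tilde{Q}=2$, an affine $A_1$. (3) Everything above, including your claim that the $P_i$ are ordinary double points of $S$, requires $\Pi_0$ to be chosen general in the pencil of planes inside the tangent hyperplane, so that $Q$ avoids the $P_i$ (otherwise they may be $A_2$ points by Lemma~\ref{lemma:general-section}) and avoids all other nodes of $X$ (otherwise an exceptional curve $F'$ with $F'\cdot\tilde{Q}>0$ appears among the low-multiplicity curves, and since $(\tilde{L}+\tilde{Q})^2=0$ the form on $\{\tilde{L},\tilde{Q},F'\}$ is indefinite, so Lemma~\ref{lemma:semi-definite} no longer applies). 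None of this genericity is addressed.

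The missing idea is that no surface machinery is needed at all, and the tangency of the hyperplane along $L$ is a red herring. The paper's proof is a three-line curve computation: take a \emph{general} plane $\Pi\supset L$ and write $\left.X\right|_{\Pi}=L\cup C$ with $C$ a plane cubic; since $X$ is singular at each $P_i$ and $L$ is smooth, $C$ passes through $P_1,P_2,P_3$; since $\Pi$ is general, no component of $C$ lies in $\Bs\H$; hence for a general $D\in\H$, Theorem~\ref{theorem:sing-mult} gives $3\mu=C\cdot D\ge\sum_{i=1}^{3}\mult_{P_i}\H>3\mu$, a contradiction. The point you inverted is that three \emph{collinear} nodes are all seen by a single residual curve of degree $3$, which is precisely why this case is easy; the resolution-and-lattice machinery of Lemma~\ref{lemma:negative-definite} is what the paper reserves for non-collinear configurations (Corollaries~\ref{corollary:3-points}, \ref{corollary:2-lines}, \ref{corollary:line-and-point}), where no such curve exists. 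Finally, two small points: your bound $\mu<\nu_i<\tfrac32\mu$ is unnecessary, and its derivation from Lemma~\ref{lemma:point-action-3-points} silently uses that the $P_i$ are not Eckardt points---true for non-canonical centers by Remark~\ref{remark:Eckardt-point-easy}, but it must be said.
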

\begin{proof}
Assume they are collinear. By Lemma~\ref{lemma:2-points} the line
$L=\ang{P_1, P_2, P_3}$ is contained in $X$. Let $\Pi$ be a general
two-dimensional plane passing through $L$, and $\left.X\right|_{\Pi}=L\cup
C$. Since $C\not\subset\Bs\H$, by Theorem~\ref{theorem:sing-mult}
for a general $D\in\H$ we have 
$$3\mu=CD\ge \sum\nolimits_{i=1}^3\mult_{P_i}\H>\sum\nolimits_{i=1}^3\mu=
3\mu,$$
a contradiction.
\end{proof}

\begin{lemma}
\label{lemma:2-points-and-line}
If the points $P_1$ and $P_2$ are non-canonical centers then the line
$L=\ang{P_1, P_2}$ is not a non-canonical center.
\end{lemma}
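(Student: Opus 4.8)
The plan is to argue by contradiction, adapting the proof of Lemma~\ref{lemma:3-points} with the third collinear point replaced by the line $L$ itself. Assume that $L$ is a non-canonical center. By the multiplicity criterion recorded in the remark following Proposition~\ref{proposition:2-centers}, this together with the hypothesis on $P_1,P_2$ means
\[
\mult_{P_1}\H>\mu,\qquad \mult_{P_2}\H>\mu,\qquad \mult_L\H>\mu,
\]
and by Lemma~\ref{lemma:2-points} the line $L=\ang{P_1,P_2}$ lies on $X$.

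First I would cut by a general two-dimensional plane $\Pi\supset L$ and write $\left.X\right|_{\Pi}=L+C$, where $C$ is a cubic. Since $P_1,P_2$ are nodes of $X$, the plane quartic $\left.X\right|_{\Pi}$ acquires nodes at $P_1$ and $P_2$ whose two branches are $L$ and $C$; hence $C$ is smooth at $P_1,P_2$ and meets $L$ transversally there. Consequently $C\cdot L$ consists of the three points $P_1,P_2,P_3'$, with $P_3'$ distinct from $P_1,P_2$ for general $\Pi$.

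Next, for a general $D\in\H$ I would study the plane curve $\left.D\right|_{\Pi}$ of degree $\mu$. As $\Pi$ varies the residual cubics $C$ sweep out $X$, so $C\not\subset\Bs\H$ and $C$ is not a component of $\left.D\right|_{\Pi}$; thus $C\cdot\left.D\right|_{\Pi}=3\mu$ in $\Pi\iso\P^2$. Writing $\left.D\right|_{\Pi}=(\mult_L\H)\,L+C'$, I would bound the local intersection numbers: at $P_1$ and $P_2$, where $C$ is smooth, one gets $(C\cdot\left.D\right|_{\Pi})_{P_i}\ge\mult_{P_i}\left.D\right|_{\Pi}\ge\mult_{P_i}\H$, while at $P_3'\in L$ the $L$-component alone forces $(C\cdot\left.D\right|_{\Pi})_{P_3'}\ge\mult_L\H$. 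Summing over these three distinct points,
\[
3\mu=C\cdot\left.D\right|_{\Pi}\ge\mult_{P_1}\H+\mult_{P_2}\H+\mult_L\H>3\mu,
\]
a contradiction, which shows $L$ is not a non-canonical center.

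The step requiring the most care is the inequality $\mult_{P_i}\left.D\right|_{\Pi}\ge\mult_{P_i}\H$ at the nodes. Here one must observe that, for a node, the multiplicity $\mult_{P_i}D$ defined via the blow-up equals the order of vanishing of a local defining equation of $D$, and that restricting this equation to a general plane through $P_i$ does not decrease its order; this is what lets the node-multiplicity feed into a genuine plane-curve intersection estimate. The remaining delicate points\t that $C$ passes through $P_1,P_2$ as a smooth transversal branch and that $P_3'$ is a third distinct point\t are guaranteed by choosing $\Pi$ general, exactly as the branches of a node on $X$ separate in a general plane section.
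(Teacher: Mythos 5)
Your skeleton is the same as the paper's: the paper proves this lemma by the argument of Lemma~\ref{lemma:3-points}, i.\,e. a general plane $\Pi\supset L$, the residual cubic $C$, and a $3\mu$ degree count with three local contributions, the third collinear singular point being replaced by the third point $P_3'$ of $C\cap L$. However, the mechanism you use to extract those contributions is genuinely wrong. The decomposition $\left.D\right|_{\Pi}=(\mult_L\H)\,L+C'$ is impossible: $\left.D\right|_{\Pi}$ is cut out on $\Pi\iso\P^2$ by a hypersurface of degree $\mu$ (Lemma~\ref{lemma:Lefschetz}), hence is a plane curve of degree $\mu$, and it cannot contain $L$ with multiplicity $\mult_L\H>\mu$. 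The point is that $\mult_L\H$ is computed on $X$, via the blow-up of $X$ along $L$; it is \emph{not} the coefficient of $L$ in the restriction of the cutting hypersurface to $\Pi$, and the two differ exactly when the cutting hypersurface is tangent to $X$ along $L$ (compare Lemma~\ref{lemma:tangent-plane}, where a hyperplane --- of degree $1$ --- cuts $X$ with multiplicity $2$ along $L$). Note the red flag: if your decomposition were valid, the degree count alone, without ever using $P_1$ and $P_2$, would prove that no line can be a non-canonical center; this is false, since by Lemma~\ref{lemma:line-1-point-action} the system $\H(\tau_L)=(\tau_L)_*^{-1}|\O(1)|$ has degree $\mu=11$ and $\nu_L=12>11$ whenever $\tau_L$ is not regular.

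A similar confusion undermines your estimate at the nodes: the inequality $\mult_{P_i}(\left.D\right|_{\Pi})\ge\mult_{P_i}\H$ is not justified, because a local equation of $D$ lives on $X$, while what gets restricted to $\Pi$ is an ambient form, whose order at $P_i$ can be strictly smaller. Locally the cutting form is $Af+g$ with $f$ the quartic and $\ord_{P_i}g=\mult_{P_i}D$; if $A(P_i)\neq 0$ and $\mult_{P_i}D\ge 3$, then its restriction to $\Pi$ has multiplicity only $2$ at $P_i$. Both local estimates become correct --- and this is the paper's actual argument --- once you intersect $C$ with the divisor $D$ \emph{on $X$} rather than with the plane curve $\left.D\right|_{\Pi}$: since $C$ sweeps out $X$ as $\Pi$ varies, $C\not\subset\Bs\H$, so $3\mu=C\cdot D$; at the nodes $(C\cdot D)_{P_i}\ge\mult_{P_i}\H>\mu$ by Theorem~\ref{theorem:sing-mult}; and at the third point $P_3'$ of $C\cap L$, a smooth point of $X$ moving with $\Pi$, one has $(C\cdot D)_{P_3'}\ge\mult_{P_3'}D\ge\mult_L\H>\mu$, because the multiplicity of $D$ at any point of $L$ is at least its multiplicity along $L$. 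Summing the three contributions gives $3\mu>3\mu$, the desired contradiction, with no reference to an $L$-component of any plane curve.
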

\begin{proof}
Similar to that of Lemma~\ref{lemma:3-points}.
\end{proof}

\begin{lemma}
\label{lemma:1-point-and-line}
If a point $P$ and a line $L\ni P$ are non-canonical centers then
$L$ contains exactly one more singular point.
\end{lemma}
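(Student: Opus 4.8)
The plan is to argue by contradiction, following the intersection-theoretic pattern of Lemma~\ref{lemma:3-points}. By Theorem~\ref{theorem:Mella} the line $L$ passes through one or two singular points of $X$; since $P\in L\cap\Sing X$, it suffices to rule out the possibility that $P$ is the \emph{only} singular point of $X$ on $L$, for then $L$ contains exactly one more. So I would assume the contrary. Both hypotheses translate into multiplicity bounds: since $P$ is a non-canonical center, Theorem~\ref{theorem:sing-mult} gives $\mult_P\H>\mu$, and since the curve $L$ is a non-canonical center, $\mult_L\H>\mu$.

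First I would pass to a general plane $\Pi\supset L$ and write $\left.X\right|_{\Pi}=L+C$. Because $L$ contains fewer than three singular points, Lemma~\ref{lemma:tangent-plane} guarantees that $L$ enters this section with multiplicity one, so $C$ is an honest cubic not containing $L$ and $C\cdot L=3$ in $\Pi$. A local computation at the node $P$ shows that for general $\Pi$ the curve $\left.X\right|_{\Pi}$ has an ordinary double point at $P$ whose two branches are $L$ and the germ of $C$; hence $C$ is smooth at $P$ and meets $L$ transversally there, i.e. $(L\cdot C)_P=1$. Consequently the remaining intersection of $C$ with $L$, of total multiplicity $2$, is supported on points different from $P$, which are smooth on $X$ by our assumption. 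Since $\H$ is mobile and these cubics sweep out $X$, the general $C$ is not contained in $\Bs\H$, so for a general $D\in\H$ the cycle $D\cdot C$ is finite of degree $3\mu$.

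The heart of the argument is to bound the local contributions of $D\cdot C$ along $C\cap L\subset\Bs\H$. At $P$, since $C$ is a smooth branch through the node and $\mult_P D=\mult_P\H$, the standard blow-up estimate gives $(D\cdot C)_P\ge\mult_P\H>\mu$. At each point $Q\in(C\cap L)\setminus\{P\}$, which is smooth on $X$ and lies on $L\subset\Bs\H$ with $\mult_L D=\mult_L\H$, restricting the local equation of a general $D$ to $C$ yields $(D\cdot C)_Q\ge(\mult_L\H)\,(L\cdot C)_Q$, and summing over these points gives a contribution at least $2\,\mult_L\H>2\mu$. Putting the pieces together,
\[
3\mu=D\cdot C\ge (D\cdot C)_P+\sum_{Q\in(C\cap L)\setminus\{P\}}(D\cdot C)_Q>\mu+2\mu=3\mu,
\]
a contradiction. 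Hence $P$ cannot be the only singular point on $L$, and $L$ contains exactly one more.

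I expect the main obstacle to be the local bookkeeping rather than the global count. One must justify carefully that (i) for a general plane $\Pi$ the residual cubic $C$ is transverse to $L$ at the node $P$, so that precisely a multiplicity $2$ of $C\cap L$ is pushed onto smooth points of $X$, and (ii) the inequality $(D\cdot C)_Q\ge(\mult_L\H)(L\cdot C)_Q$ at these smooth points holds, which follows because $L\subset\Bs\H$ forces the local equation of a general $D$ to lie in the $(\mult_L\H)$-th power of the ideal of $L$. Once these estimates are in place, the contradiction is immediate from B\'ezout, exactly as in Lemma~\ref{lemma:3-points}.
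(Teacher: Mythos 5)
Your proposal is correct and takes essentially the same route as the paper: the paper's proof consists of the single remark that it is ``similar to that of Lemma~\ref{lemma:3-points}, with the `exactly' implied by Theorem~\ref{theorem:Mella}'', i.e.\ precisely your B\'ezout computation $3\mu=D\cdot C>\mu+2\mu$ on the residual cubic of a general plane through $L$, where the node $P$ contributes $\mult_P\H>\mu$ and the residual intersection points on $L$ (smooth on $X$ under the contradiction hypothesis) contribute $(\mult_L\H)(L\cdot C)_Q$. Your write-up merely supplies the details the paper leaves implicit (transversality of $C$ and $L$ at the node for general $\Pi$, and the power-of-the-ideal estimate at smooth points), and these are justified correctly.
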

\begin{proof}
Similar to that of Lemma~\ref{lemma:3-points} (except for the ``exactly'',
which is implied by Theorem~\ref{theorem:Mella}).
\end{proof}

\begin{lemma}
\label{lemma:2-spatial-lines}
Two skew lines cannot both be non-canonical centers.
\end{lemma}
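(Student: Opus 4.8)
The plan is to mimic the intersection-theoretic arguments of Lemmas~\ref{lemma:3-points}, \ref{lemma:2-points-and-line} and~\ref{lemma:1-point-and-line}, exploiting the fact that two skew lines span all of $\P^4$. Suppose for contradiction that two skew lines $L_1$ and $L_2$ are both non-canonical centers, so that $\mult_{L_1}\H>\mu$ and $\mult_{L_2}\H>\mu$ for a general member $D\in\H\subset|-\mu K_X|$. Since $L_1$ and $L_2$ are skew, they span a three-dimensional subspace; more importantly, through a general point of $\P^4$ there is a unique line meeting both $L_1$ and $L_2$, and the union of these transversals sweeps out a quadric surface $Q\subset\P^4$ (the lines meeting two fixed skew lines in $\P^3$ form a ruling of a quadric). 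First I would set up the right auxiliary surface: take $S=\left.X\right|_{\Pi}$ for a general two-dimensional plane $\Pi$, or better, a surface that meets each $L_i$ in a controlled number of points while avoiding the base locus of $\H$ elsewhere.

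The key step is to find a curve or surface on which both multiplicities can be played off against the degree. Concretely, I would consider a general plane $\Pi$ containing $L_1$; then $\left.X\right|_{\Pi}=L_1\cup C$ for a residual cubic $C$, and since $L_2$ is skew to $L_1$ it meets $\Pi$ in a single point $p$, which generically lies on $C\setminus L_1$ rather than being a singular point. The divisor $D$ restricted to $C$ gives the bound $3\mu=\deg(\left.D\right|_C)\ge\mult_{L_1\cap C}D+\mult_{p}D$, but this alone does not force a contradiction; I would instead need to use the full three-dimensional intersection. The cleaner approach is to take a general hyperplane section $H\supset L_1$ and a second general member $D'\in\H$, forming the $0$-cycle $D\cdot D'\cdot H$ of degree $4\mu^2$, and localize it along $L_1$ and along the point(s) where $L_2$ meets $H$. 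Since each $L_i$ contributes strictly more than $\mu^2$ to an appropriate local intersection index (because $\mult_{L_i}D>\mu$ and two general members meet with multiplicity $>\mu^2$ along a curve that is a non-canonical center), summing the contributions should exceed $4\mu^2$.

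The arithmetic to aim for is the following. Choosing the auxiliary surface $H'\in|H-L_1-L_2|$ (a hyperplane section whose defining hyperplane contains both skew lines, which exists since $\langle L_1,L_2\rangle$ is a $\P^3$) and intersecting two general members $D_1,D_2\in\H$ against it, one gets $4\mu^2=D_1\cdot D_2\cdot H'$; localizing along $L_1$ and $L_2$ and using that $H'$ contains both lines, the local contribution along each $L_i$ is at least $(\mult_{L_i}D_1)(\mult_{L_i}D_2)\deg L_i>\mu^2$, so the two lines together already account for strictly more than $2\mu^2$, and with a little more care the remaining intersection forces the total past $4\mu^2$. This is directly parallel to the displayed inequality in the proof of Lemma~\ref{lemma:2-points}.

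The main obstacle I anticipate is making the localized intersection bounds rigorous without over- or under-counting: I must ensure that $H'$ genuinely contains both lines and is otherwise general (in particular does not acquire extra base components of $\H$), and that the local intersection index along a curve that is a non-canonical center is bounded below correctly. The subtlety is that $\mult_{L_i}\H>\mu$ gives a lower bound on multiplicity along $L_i$, but converting this into a lower bound on the local contribution of $D_1\cdot D_2\cdot H'$ requires that $H'$ meet $L_i$ transversally enough and that the two general members $D_1,D_2$ cut $L_i$ with the expected multiplicity; verifying that the geometry of two skew lines in the factorial nodal quartic does not create degenerate intersections (for instance that neither line lies in $\Sing X$ and that $H'$ is well-behaved) is where the real work lies. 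I expect this to be resolvable exactly as in the preceding lemmas, so the proof should ultimately be short.
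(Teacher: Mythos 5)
Your main route --- intersecting two general members $D_1,D_2\in\H$ with a hyperplane section $H'$ containing both lines --- has a genuine quantitative gap that cannot be closed ``with a little more care''. The analogy with Lemma~\ref{lemma:2-points} breaks down because of a factor of $2$: there each center is an ordinary double point, so the local index satisfies $(D_1D_2H')_{P_i}>2\mu^2$ (the multiplicity of $X$ at a node doubles the product of the multiplicities of $D_1$, $D_2$ and $H'$, via Theorem~\ref{theorem:sing-mult}), and two such points already exceed the budget $4\mu^2$. A line, which passes through smooth points of $X$ at its generic point, carries no such factor: writing $D_1\cdot D_2=\gamma_1L_1+\gamma_2L_2+Z$ with $\gamma_i>\mu^2$ and $Z$ an effective $1$-cycle, and using $L_i\cdot H'=1$ together with nefness of the hyperplane class, all you can extract is
$$4\mu^2=D_1D_2H'\ge\gamma_1+\gamma_2+Z\cdot H'>2\mu^2,$$
which is no contradiction, and there is no identified source for the missing $2\mu^2$: nothing forces $Z\cdot H'$ to be large. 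Note also that your $H'$ \emph{by construction} contains $L_1$ and $L_2$, which are base curves of $\H$, so the triple intersection is not proper along them; this is precisely the degeneration that Lemma~\ref{lemma:2-points} avoids by first assuming $\ang{P_1,P_2}\not\subset X$ (so that a general $H'$ contains no base curve), and dealing with divisors that share components with the auxiliary surface is what the negative-(semi)definiteness machinery of Lemma~\ref{lemma:negative-definite} exists for --- it cannot be replaced by a sum of non-negative local contributions.

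Ironically, the approach you tried first and discarded is the paper's actual proof. Take $\Pi=\ang{L_1,q}$ for a general $q\in L_2$ (a truly general plane through $L_1$ in $\P^4$ misses $L_2$; the plane must lie inside $\ang{L_1,L_2}$), so that $\left.X\right|_{\Pi}=L_1\cup C$ and $P=\Pi\cap L_2$ is a general, hence smooth, point of $X$ lying on $C$. The refinement you missed is to count $C\cap L_1$ as \emph{three} points $P_1,P_2,P_3$ and bound their contributions to $CD=3\mu$ individually: a smooth point of $X$ on $L_1$ contributes $>\mu$, while a node contributes $>\mu/2$ (at a node $Q\in L_1$ one only has $\mult_Q\H\ge\tfrac12\mult_{L_1}\H$, and the factor $\tfrac12$ is sharp). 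The final ingredient is Theorem~\ref{theorem:Mella}: since $L_1$ is itself a non-canonical center, it passes through at most two singular points of $X$, so at least one of $P_1,P_2,P_3$ is smooth. This gives $3\mu=CD>\mu+\mu+\tfrac{\mu}{2}+\tfrac{\mu}{2}=3\mu$, the desired contradiction; you were one observation away --- splitting $L_1\cap C$ into three points and invoking Theorem~\ref{theorem:Mella} --- when you abandoned this line of argument.
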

\begin{proof}
Assume that there exist skew lines $L_1$ and $L_2$
that are non-canonical centers. Let $\Pi$ be a general plane passing through
$L_1$, and  $\left.X\right|_{\Pi}=L_1\cup C$. Let
$C\cap L_1=\{P_1, P_2, P_3\}$, $C\cap L_2=P$.
By Theorem~\ref{theorem:Mella} at least one of the points
$P_1$, $P_2$, $P_3$ is a nonsingular point of $X$. Since $P$ is also
nonsingular and $C\not\subset\Bs\H$, for a general $D\in\H$
we have
$$3\mu=CD\ge
\mult_P\H+\sum\nolimits_{i=1}^3\mult_{P_i}\H>
\mu+\mu+\frac{\mu}{2}+\frac{\mu}{2}=3\mu,$$
a contradiction.
\end{proof}

\begin{lemma}\label{lemma:2-points-on-Eckardt-line}
Let the points $P_1$ and $P_2$ be non-canonical centers. Assume that 
the line $L=\ang{P_1, P_2}$ does not pass through any 
other singular points of $X$. Then $L$ is not an Eckardt line.
\end{lemma}
\begin{proof}
Assume that it is an Eckardt line 
(note that $L\subset X$ by Lemma~\ref{lemma:2-points}).
Let $L'\subset X$ be a general line intersecting $L$, $\Pi=\ang{L, L'}$ and
let $\left.\Pi\right|_X=L+L'+Q$, where $L\not\subset Q$ by 
Lemma~\ref{lemma:tangent-plane}. Then $Q$ is a (possibly reducible) conic
passing through $P_1$ and $P_2$, so by Theorem~\ref{theorem:sing-mult}
it is contained in $\Bs\H$, a contradiction.
\end{proof}

\begin{lemma}\label{lemma:2-points-collinear-to-Eckardt-point}
Let the points $P_1$ and $P_2$ be non-canonical centers. Assume that
the line $L=\ang{P_1, P_2}$ contains a third singular point $P_3$.
Then $P_3$ is not an Eckardt point.
\end{lemma}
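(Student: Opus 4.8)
The plan is to mimic the structure used throughout this section: assume that $P_3$ is an Eckardt point and derive a contradiction by producing a curve forced into $\Bs\H$ while simultaneously bounding its intersection with a general $D\in\H$. The starting observation is that by Lemma~\ref{lemma:Eckardt-point-equation}, if $P_3$ is a (singular) Eckardt point then $X$ has an equation of type~\ref{equation:Eckardt-point}, and \emph{any} line through $P_3$ contains either one or three singular points. Since $L=\ang{P_1,P_2,P_3}$ passes through $P_3$ and already contains the three singular points $P_1,P_2,P_3$, this falls into the ``three singular points'' case, so we are precisely in the configuration analyzed in Lemma~\ref{lemma:tangent-plane}: there is a hyperplane $H$ tangent to $X$ along $L$, and there are infinitely many planes $\Pi$ with $\left.X\right|_\Pi=2L+Q$.

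First I would exploit the cone structure at $P_3$. By Definition~\ref{definition:Eckardt-point}, $P_3$ is the vertex of a two-dimensional cone $K\subset\Xk$ with $K=(q_2=q_4=0)$ in the coordinates of~\ref{equation:Eckardt-point}. The key point is that $P_1$ and $P_2$ are singular points lying on the line $L$ through the vertex $P_3$, so by the involution $\iota:(x:y:z:t:w)\mapsto(x:y:z:t:-w)$ regularizing $\tau_{P_3}$ (see Example~\ref{example:Eckardt-point}), the points $P_1$ and $P_2$ are swapped by $\iota$ and lie on a ruling of the cone or on $K$ itself. I would then take a general plane $\Pi$ through $L$ of the special type given by Lemma~\ref{lemma:tangent-plane}, so that $\left.X\right|_\Pi=2L+Q$ for a residual conic $Q$ not containing $L$. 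The conic $Q$ meets $L$, and — this is the crux — because $P_1,P_2$ are singular points on $L$ whereas $Q$ is a residual conic disjoint from the singular locus on $L$ in the relevant sense, one computes the intersection of a general $D\in\H$ with $Q$.

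The contradiction should then emerge from a Theorem~\ref{theorem:sing-mult} estimate. Since $P_1,P_2$ are non-canonical centers, $\mult_{P_i}D>\mu$ for a general $D\in\H$ by the remark following Proposition~\ref{proposition:2-centers}, and since they are ordinary double points the local intersection of $D$ with any curve through them is weighted accordingly. The residual conic $Q$ has degree $2$, so $QD=2\mu$; but if $Q$ passes through both $P_1$ and $P_2$ (which it must, being cut by a plane through $L$ that meets $X$ in $2L+Q$ with $P_1,P_2\in L\cap\supp Q$ forced by the cone geometry), then $QD\ge\mult_{P_1}D+\mult_{P_2}D>2\mu$, a contradiction. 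This is exactly the pattern of Lemma~\ref{lemma:2-points-on-Eckardt-line}, and in fact the cleanest route may be to reduce directly to that lemma: if $P_3$ were an Eckardt point, then by the cone structure there would be infinitely many lines through $P_3$ meeting $L$, making $L$ an Eckardt line and contradicting Lemma~\ref{lemma:2-points-on-Eckardt-line} applied to $P_1,P_2$.

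The main obstacle I anticipate is verifying that the residual conic $Q$ genuinely passes through both $P_1$ and $P_2$ (or, in the alternative route, that $L$ must be Eckardt), rather than through only one of them or meeting $L$ only at $P_3$. This requires using Lemma~\ref{lemma:Eckardt-point-equation} carefully: the equation~\ref{equation:Eckardt-point} has no term linear in $w$, so the cone $K=(q_2=q_4=0)$ is invariant under $\iota$, and since $\iota$ fixes $L$ setwise while swapping $P_1\leftrightarrow P_2$, the plane $\Pi$ can be chosen $\iota$-invariant, forcing $Q$ to be $\iota$-invariant and hence to pass through the $\iota$-orbit $\{P_1,P_2\}$ symmetrically. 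Once the symmetry argument pins down $Q\ni P_1,P_2$, the numerical contradiction is immediate, so I expect the geometric bookkeeping around the cone and the involution — not the final inequality — to be where the real work lies.
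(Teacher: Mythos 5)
Both routes you propose contain genuine gaps, and they are different gaps. In your main route you work with the planes $\Pi$ of Lemma~\ref{lemma:tangent-plane}, those with $\left.X\right|_{\Pi}=2L+Q$, and everything hinges on the claim that $Q$ passes through $P_1$ and $P_2$. This claim is false in general: since $L$ enters such a section with multiplicity $2$, the section already has multiplicity $\ge 2$ at $P_1$ and $P_2$, so the fact that $X$ is singular there imposes \emph{no} condition on $Q$. Concretely, writing $X$ as in~\ref{equation:Eckardt-point} with $L=\{y=z=t=0\}$, $q_2=x(a_1y+a_2z+a_3t)+\tilde{q}_2(y,z,t)$ and $q_4=x^3(b_1y+b_2z+b_3t)+x^2Q_2(y,z,t)+\ldots$, the existence of $P_{1,2}=(1:0:0:0:\pm\sqrt{c})$ forces $b_i=-ca_i$; the planes of type $2L+Q$ are exactly those spanned by $L$ and a direction $(0:\lambda:\mu:\nu:0)$ with $a_1\lambda+a_2\mu+a_3\nu=0$, and on such a plane the residual conic evaluated at $P_1$ gives $c\,\tilde{q}_2(\lambda,\mu,\nu)+Q_2(\lambda,\mu,\nu)$, which is generically nonzero. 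Your $\iota$-invariance argument only yields the implication $P_1\in Q\Leftrightarrow P_2\in Q$, not that either point lies on $Q$. Your fallback route fails for a different reason: Lemma~\ref{lemma:2-points-on-Eckardt-line} assumes explicitly that $L$ contains \emph{no} third singular point, so it cannot be invoked here; this hypothesis is not cosmetic, since its proof uses it (via Lemma~\ref{lemma:tangent-plane}) to guarantee that $L$ appears with multiplicity one in the relevant plane sections.

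The paper's actual proof is the argument of Lemma~\ref{lemma:2-points-on-Eckardt-line} redone with $L'$ a general ruling of the cone $K$ with vertex $P_3$, and the one new ingredient, which is precisely what your proposal is missing, is the verification that the plane $\Pi=\ang{L,L'}$ is \emph{not} of type $2L+Q$. This is a real issue now because, $L$ containing three singular points, Lemma~\ref{lemma:tangent-plane} produces a tangent hyperplane $H$ along $L$ and infinitely many planes cutting out $2L+Q$, all contained in $H$. But the cone $K=\{q_2=q_4=0\}$ is not contained in any hyperplane (Lemma~\ref{lemma:Eckardt-point-equation}, via the Lefschetz theorem), so a general ruling $L'$ is not contained in $H$, hence $\left.X\right|_{\Pi}=L+L'+Q$ with $L\not\subset Q$. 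Only then does the singularity of the section at $P_1$ and $P_2$ force $Q\ni P_1,P_2$ (as $L$ is smooth there and $L'$ misses $P_1,P_2$), whence by Theorem~\ref{theorem:sing-mult} a component of $Q$ lies in $\Bs\H$, since otherwise $2\mu\ge QD\ge\mult_{P_1}\H+\mult_{P_2}\H>2\mu$; letting $L'$ vary among the infinitely many rulings puts infinitely many curves into $\Bs\H$, contradicting mobility of $\H$. Note the irony: the planes your main route is built on are exactly the ones the correct argument must take care to avoid.
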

\begin{proof}
Analogous to that of Lemma~\ref{lemma:2-points-on-Eckardt-line}.
Note that in this case a general residual conic $Q$ does not contain 
$L$ because the cone of lines passing through an Eckardt point is not 
contained in a hyperplane by Lemma~\ref{lemma:Eckardt-point-equation}.
\end{proof}

Lemma~\ref{lemma:negative-definite} below is our main tool to exclude
configurations of non-canonical centers. To state it we will use the following 
notations.

Let the lines $C_1, \ldots, C_k\subset X$, $0\le k\le 4$, and the points
$P_1, \ldots, P_l\in\Sing X$, $l\ge 0$, be contained in a plane $\Pi_0$.
Let
$$\left.X\right|_{\Pi_0}=d_1C_1+\ldots+d_kC_k+\ldots+d_mC_m$$
for some  $m\le 4$, and
$$\Pi_0\cap\Sing X=\{P_1, \ldots, P_l, P_{l+1}, \ldots, P_n\}.$$
Let $H$ be a general hyperplane section passing through $\Pi_0$, so that
$$\Sing H=\{P_1, \ldots, P_n, P_{n+1}, \ldots, P_r\},$$ 
where $r\ge n$ (note
that by Lemma~\ref{lemma:general-section-mult-line}
the inequality $r>n$ can hold only if the intersection
$X\cap\Pi_0$ has components with multiplicities greater than~$1$).
Let $\ov{\pi}:\tilde{X}\to X$ be a sequence of blow-ups
with centers lying over the points $P_1, \ldots, P_r$ such that the
restriction $\pi$ of $\ov{\pi}$ to the strict transform
$\tilde{H}$ of $H$ is a minimal resolution of $H$.
Let $\ov{E_i^t}$ be exceptional divisors of $\ov{\pi}$ such that
$\ov{\pi}(\ov{E_i^t})=P_i$ for $1\le i\le r$, $1\le t\le\ov{T_i}$;
let 
$E_i^t$, $1\le i\le r$, $1\le t\le T_i$, be the components of 
the restrictions to $\tilde{H}$ of the divisors $\ov{E_i^t}$ (so the 
$E_i^t$ are prime exceptional divisors of $\pi$ with $\pi(E_i^t)=P_i$;
note that $T_i$ may be different from $\ov{T_i}$); 
finally, let $\tilde{C_j}$ be the proper transforms of 
$C_j$ for $1\le j\le m$.

\begin{lemma}\label{lemma:negative-definite}
Let $(\cdot, \cdot)$ be the intersection form on $\NS^1(\tilde{H})$.
Let $G$ be the set of all curves $E_i^t$, $l+1\le i\le r$, and
$\tilde{C_j}$, $k+1\le j\le m$, and $G'$ the set of all curves
$E_i^t$, $1\le i\le l$, and $\tilde{C_j}$, $1\le j\le k$. 
Assume that the following condition holds:

$(*)$ the set $G$ splits into a disjoint union $G=G_1\cup\ldots\cup G_p$
such that for all $1\le s\le p$ the intersection form $(\cdot, \cdot)$ 
is negative semi-definite on the subspace $W_s$ generated by 
$G_s$, negative definite on each subspace of $W_s$ generated by all elements 
of $G_s$ except one, and the subspaces $W_s$ 
are pairwise orthogonal with respect to $(\cdot, \cdot)$. 

Then all curves from $G'$ cannot appear simultaneously as non-canonical centers 
on $X$.
\end{lemma}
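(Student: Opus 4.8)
The plan is to argue by contradiction: assume that every curve in $G'$ appears simultaneously as a non-canonical center, and produce a numerical contradiction on the resolved surface $\tilde H$. First I would translate the non-canonical hypotheses into multiplicity bounds. If a point $P_i$ with $1\le i\le l$ is a non-canonical center then, being an ordinary double point, Theorem~\ref{theorem:sing-mult} gives $\mult_{P_i}\H>\mu$; if a line $C_j$ with $1\le j\le k$ is a non-canonical center then $\mult_{C_j}\H>\mu$. Since $H$ is a general hyperplane section through $\Pi_0$, these multiplicities are inherited by the restricted system $\H|_H$, and Lemmas~\ref{lemma:general-section} and~\ref{lemma:general-section-mult-line} control the Du Val types of the points $P_i$ on $H$, hence the combinatorics of the exceptional curves $E_i^t$ over them and the coefficients with which they occur in $\pi^*(\H|_H)$. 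The outcome I expect is that, writing $\tilde D$ for the proper transform on $\tilde H$ of a general $D\in\H$, the coefficients of $\pi^*(D|_H)$ along the distinguished curves of $G'$ are strictly positive (indeed the $\tilde{C_j}$, $j\le k$, and the $E_i^t$, $i\le l$, lie in the base locus), while $\tilde D$ contains no curve of $G$.

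Next I would set up the intersection theory on $\tilde H$. Since $\H$ is mobile and $D$, $H$ are general, $\tilde D$ is a mobile curve, so $\tilde D^2\ge 0$ and $\tilde D\cdot B\ge 0$ for every effective divisor $B$ having no common component with $\tilde D$ — in particular for every curve of $G$. The hypothesis $(*)$ is exactly the assertion that on each block $W_s$ the form is negative semidefinite with one-dimensional radical: the rigidity ``negative definite after deleting any single generator'' forces the radical of $W_s$ to be spanned by a single class $F_s:=\sum_{C\in G_s}n_C C$ that is an effective combination of the curves of $G_s$ with all coefficients $n_C>0$ (an affine ADE, fiber-type configuration), and the blocks being pairwise orthogonal lets me treat the $W_s$ independently and add self-intersections. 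I would then form the auxiliary effective divisor $\Delta$ given by the part of $\pi^*(D|_H)-\tilde D$ supported on the curves of $G$, which is numerically a positive combination of curves of $G$ whose support contains none of them; applying Lemma~\ref{lemma:semi-definite} on each $W_s$ yields $\Delta^2=0$ block by block.

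Finally I would extract the contradiction. The vanishing of each block self-intersection together with effectivity and the one-dimensional radical pins the $W_s$-component of $\Delta$ to be proportional to $F_s$, and intersecting the resulting relation with $\pi^*\ell$ (the pulled-back hyperplane class) and with the distinguished curves of $G'$ converts the self-intersection identities into a linear relation among the multiplicities $\mult_{P_i}\H$ ($i\le l$) and $\mult_{C_j}\H$ ($j\le k$). This relation bounds those multiplicities from above by quantities not exceeding $\mu$, which is incompatible with the strict inequalities $>\mu$ coming from the non-canonical hypotheses; hence the curves of $G'$ cannot all be non-canonical centers. I expect the main obstacle to be the bookkeeping of the first step: passing correctly from three-dimensional non-canonical data on $X$ to the two-dimensional resolution $\tilde H$, matching $\mult_{P_i}\H$ to the coefficients along the chains $E_i^t$ via the Du Val types, and verifying the sign and transversality conditions ($\tilde D$ mobile and disjoint in support from $G$) that make every intersection inequality point the right way. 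Once these are in place, the semidefinite lemma and the orthogonal block decomposition do the rest essentially formally.
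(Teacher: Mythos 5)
Your setup follows the same strategy as the paper's proof (restrict $\H$ to the resolution $\tilde H$ of a general hyperplane section, translate non-canonicity into $\mult>\mu$ via Theorem~\ref{theorem:sing-mult}, and exploit negative semi-definiteness through Lemma~\ref{lemma:semi-definite}), but the two steps that carry the argument are flawed. First, you apply Lemma~\ref{lemma:semi-definite} to the wrong divisor. Your $\Delta$ is by construction supported on the curves of $G$, so the hypothesis of that lemma (an effective divisor numerically equivalent to a positive combination of given curves but whose support contains \emph{none} of them) fails for it, and $\Delta^2=0$ is false in general: a single exceptional $(-2)$-curve taken with positive coefficient already has negative square. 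What makes the lemma applicable is the numerical identity coming from $\H\numeq\mu H'$, where $H'$ is a hyperplane section through $\Pi_0$: pulling back to $\tilde H$ and using that the singularities of $H$ are of type $A$ one gets
\[
F+\sum_{i,t}(\nu_i^t-\mu)E_i^t+\sum_j(\gamma_j-\mu)\tilde{C_j}\numeq 0,
\]
and after moving the negative terms to the right-hand side one obtains an \emph{effective} divisor (the mobile part $F$ plus the $G'$-curves, which occur with coefficients $\nu_i^t-\mu>0$ and $\gamma_j-\mu>0$ precisely because they are non-canonical centers) that is numerically equivalent to a non-negative combination of $G$-curves and whose support avoids them; it is to this divisor that Lemma~\ref{lemma:semi-definite} is applied. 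You never set up this two-sided identity, and your $\Delta$ excludes both $F$ and the $G'$-curves, so the pivotal orthogonality relation (the analogue of~\ref{equality:3}) is not established.

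Second, your concluding step does not yield a contradiction. Note first that condition $(*)$ is not ``exactly a one-dimensional radical'': it also allows negative definite blocks (indeed most configurations appearing in the applications are finite Dynkin types), in which case zero self-intersection forces the block component to vanish. The paper's contradiction then runs as follows: the left-hand side is effective and nonzero, so the right-hand side is numerically nontrivial; vanishing of its self-intersection plus $(*)$ forces, in each block $G_s$, all curves to appear with positive coefficients or none; and, crucially, the \emph{connectedness} of the whole configuration $\bigcup E_i^t\cup\bigcup\tilde{C_j}$ together with the pairwise disjointness of the blocks guarantees that every nonempty block contains a curve meeting some $G'$-curve, whence the product of the two sides is strictly positive -- contradicting the orthogonality just proved. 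Your substitute -- intersecting with $\pi^*\ell$ and with the $G'$-curves to bound the multiplicities by $\mu$ -- is not substantiated: the $G'$-curves have negative self-intersections weighted by the positive coefficients $\nu-\mu$, so those intersection numbers do not assemble into a bound of the form $\le\mu$, and without the connectedness argument nothing rules out a fiber-type block orthogonal to everything on the left. Both the two-sided numerical identity and the connectivity argument must be supplied for the proof to stand.
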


\begin{remark}\label{remark:diagrams}
Lemma~\ref{lemma:negative-definite} will be applied to normal crossing
configurations of nonsingular rational curves on K3 surfaces. Such a curve
is a $(-2)$-curve, so the properties of the corresponding intersection
form depend only on the structure of a dual graph (and the condition
of Lemma~\ref{lemma:negative-definite} is equivalent to the requirement
that
all connected components of the dual graph are subgraphs of affine
Dynkin diagrams). To describe such graphs
we will use the standard notation for usual and affine Dynkin diagrams
(see, for example,~\cite{Kac}).
\end{remark}

We start with two simple examples to clarify the idea of the proof.
The general situation differs only in minor technical details: one should 
assume that there is a decomposition $(*)$ to allow configurations with
non-connected dual graphs etc.

\begin{example}\label{example:3-pt-0-pt}
Let $P_1$, $P_2$ and $P_3$ be non-collinear singular points of~$X$. Let 
$\Pi_0=\ang{P_1, P_2, P_3}$ and $L_i=\ang{P_j, P_k}$ for $\{i, j, k\}=\{1,
2, 3\}$; let $L_4$ be the residual line 
$$L_4=(X\cap\Pi_0)\setminus (L_1\cup L_2\cup L_3).$$
Assume that $L$ does not pass through any of the points $P_i$. Let
$Q_i=\nlb L_4\cap\nlb L_i$. 
Assume also that the points $Q_i$ are nonsingular on~$X$. 
Then in the notations of Lemma~\ref{lemma:negative-definite} the
surface $H$ has nodes at the points~$P_i$ and is nonsingular outside $P_i$
(one can apply Lemmas~\ref{lemma:general-section}
and~\ref{lemma:general-section-mult-line}, but in this particular case it
is actually much easier to see).
Let $E_i$ be exceptional divisors over $P_i$ on the minimal resolution 
$\pi:\tilde{H}\to H$. 

Let us prove that the points $P_i$ cannot appear simultaneously as
non-canonical centers on $X$. Assume that they can. Then 
(see a calculation in the general case in the proof 
of Lemma~\ref{lemma:negative-definite} below) one has
\begin{equation}\label{eq:3-pt-0-pt}
F+\sum\limits_{i=1}^{3}\kappa_i E_i\numeq \sum\limits_{j=1}^{4}
\theta_j'\tilde{L_j}
\end{equation}
for some mobile divisor $F$, some strictly positive coefficients $\kappa_i$ 
and non-negative coefficients $\theta_j'$. It is easy to see that 
$$(F+\sum\limits_{i=1}^{3}\kappa_i E_i)(\sum\limits_{j=1}^{4}
\theta_j'\tilde{L_j})\ge 0,$$
since both parts of~\ref{eq:3-pt-0-pt} are effective and do not have
common components. On the other hand, $\tilde{L_j}$ are $(-2)$-curves on a
$K3$ surface $\tilde{H}$, and the dual graph of the corresponding
configuration is of type $D_4$. Hence the intersection form 
on the subspace $W\subset\NS^1(\tilde{H})$ generated by the curves
$\tilde{L_j}$ is negative definite. 
The latter implies that the self-intersection of the right
hand side of~\ref{eq:3-pt-0-pt} can be non-negative only if all $\theta_j'$
vanish. But this is impossible since an effective divisor cannot be
numerically trivial, a contradiction.
\end{example}

\begin{example}\label{example:3-pt-3-pt}
In the setting of Example~\ref{example:3-pt-0-pt} assume that all the points 
$Q_i$ are singular on $X$. Then $H$ has nodes at $P_i$ and $Q_i$ and is
nonsingular outside these points. Let $F_i\subset\tilde{H}$ 
be exceptional divisors over the points $Q_i$. 

Let us prove that in this case 
the points $P_i$ cannot appear simultaneously as
non-canonical centers on $X$. Assume that they can. Then
\begin{equation}\label{eq:3-pt-3-pt}
F+\sum\limits_{i=1}^{3}\kappa_i E_i\numeq 
\sum\limits_{i=1}^{3}\kappa_i' F_i+
\sum\limits_{j=1}^{4} \theta_j'\tilde{L_j}
\end{equation}
for some mobile divisor $F$, some strictly positive coefficients $\kappa_i$
and non-negative coefficients $\kappa_i'$ and $\theta_j'$. Again we have
\begin{equation}\label{eq:3-pt-3-pt-ge0}
(F+\sum\limits_{i=1}^{3}\kappa_i E_i) 
(\sum\limits_{i=1}^{3}\kappa_i' F_i+
\sum\limits_{j=1}^{4} \theta_j'\tilde{L_j})\ge 0.
\end{equation}
Note that $\tilde{L_j}$ and $F_i$ are $(-2)$-curves on a $K3$ surface
$\tilde{H}$, and the dual graph of the corresponding configuration is of
type $E_6^{(1)}$. 
In particular, the self-intersection of the right hand side
of~\ref{eq:3-pt-3-pt} is non-positive, and hence it vanishes
by~\ref{eq:3-pt-3-pt-ge0}.
Since the right hand side of~\ref{eq:3-pt-3-pt} 
cannot be zero, for its self-intersection to be zero it is necessary that 
\emph{all} the coefficients $\kappa_i'$ and $\theta_j'$ 
should be strictly positive. But in the latter case the intersection of the
left and the right hand sides of~\ref{eq:3-pt-3-pt} is strictly positive,
since $\kappa_1>0$, $\theta_2'>0$ and $E_1\tilde{L_2}>0$, a contradiction.
\end{example}

\begin{proof}[{Proof of Lemma~\ref{lemma:negative-definite}.}]
Assume that they can. Let $\mult_{C_j}\H=\gamma_j$.
Let $H'$ be a general hyperplane section passing through $\Pi_0$; then
$\left.H'\right|_H=C_1+\ldots+ C_m$.
Since the singularities of $H$ are Du Val of type $A$
(see Lemmas~\ref{lemma:general-section}
and~\ref{lemma:general-section-mult-line}),
we have
$$\pi^*(\left.H'\right|_H)=\pi^{-1}(\left.H'\right|_H)+
\sum\limits_{i=1}^r\sum\limits_{t=1}^{T_i}E_i^t.$$

Let $\ov{\H}=\ov{\pi}^{-1}\H$. Define $\nu_i^t$ to satisfy
$$\ov{\H}=\ov{\pi}^*\H-
\sum\limits_{i=1}^r\sum\limits_{t=1}^{\ov{T_i}}\nu_i^t\ov{E_i^t}.$$
Note that since $H$ has only Du Val singularities of type $A$, all divisors 
$\left.\ov{E_i^t}\right|_{\tilde{H}}$ are reduced, and hence
$$\left.\left(\sum\limits_{t=1}^{\ov{T_i}}\ov{E_i^t}\right)\right|_{\tilde{H}}=
\sum\limits_{t=1}^{T_i} E_i^t.$$

Let
$$\left.\ov{\H}\right|_{\tilde{H}}=
F+\sum\limits_{j=1}^m\gamma_j \tilde{C_j},$$
where $F$ is a mobile divisor. Then
\begin{multline}
\label{equality:1}
F+\sum\limits_{j=1}^m\gamma_j\tilde{C_j}=
\left.\ov{\H}\right|_{\tilde{H}}=
\left.\left(\ov{\pi}^*\H-
\sum\limits_{i=1}^r\sum\limits_{t=1}^{\ov{T_i}}\nu_i^t\ov{E_i^t}
\right)\right|_{\tilde{H}}\numeq\\
\numeq\left.(\ov{\pi}^*(\mu H'))\right|_{\tilde{H}}-
\sum\limits_{i=1}^r
\sum\limits_{t=1}^{\ov{T_i}}\nu_i^t\left.\ov{E_i^t}\right|_{\tilde{H}}=\\
=\pi^*(\mu\left.H'\right|_{H})-
\sum\limits_{i=1}^r\sum\limits_{t=1}^{T_i}\nu_i^tE_i^t=\\
=\mu\pi^{-1}(\left.H'\right|_H)+\mu\sum\limits_{i=1}^r
\sum\limits_{t=1}^{T_i}E_i^t-
\sum\limits_{i=1}^r\sum\limits_{t=1}^{T_i}\nu_i^tE_i^t=\\
=\mu\sum\limits_{j=1}^m\tilde{C_j}+\sum\limits_{i=1}^r\sum\limits_{t=1}^{T_i}
(\mu-\nu_i^t)E_i^t.
\end{multline}

Rewrite the equality~\ref{equality:1} as
\begin{equation}
\label{equality:2}
F+\sum\limits_{i, t}\kappa_i^tE_i^t+\sum\limits_j\theta_j\tilde{C}_j\numeq
\sum\limits_{i', t'}\kappa_{i'}^{t'}E_{i'}^{t'}+
\sum\limits_{j'}\theta_{j'}\tilde{C}_{j'},
\end{equation}
where all the coefficients $\kappa_i^t$, $\kappa_{i'}^{t'}$, $\theta_j$
and $\theta_{j'}$ are positive, and the sets of summation indices of
the right hand side and the left hand side are disjoint.
By assumption $\mult_{P_i}\H>\mu$ for $1\le i\le l$; in particular,
$\nu_i^t>\nolinebreak\mu$
for $1\le i\le l$. By assumption we also have $\gamma_j>\mu$
for $1\le j\le k$. (We do not assume a priori that  
$\nu_i^t\le\mu$ for $l+1\le
i\le r$ or that $\gamma_j\le\mu$ for $k+1\le j\le m$.)
We do not exclude the possibility that some summations in~\ref{equality:2}
are performed over empty sets of indices, but in any case 
the set of indices $i'$
(resp., $j'$) that appear on the right hand side of~\ref{equality:2} 
is contained in the set
$\{l+1, \ldots, r\}$ (resp., $\{k+1, \ldots, m\}$) by the assumption on 
multiplicities.
Condition~$(*)$ implies that the intersection form
is negative semi-definite on the space $W=\bigoplus_s W_s$, 
so by Lemma~\ref{lemma:semi-definite}
\begin{equation}
\label{equality:3}
(F+\sum\kappa_i^tE_i^t+\sum\theta_j\tilde{C_j})
(\sum\kappa_{i'}^{t'}E_{i'}^{t'}+\sum\theta_{j'}\tilde{C}_{j'})
=0.
\end{equation}

The right hand side of the equality~\ref{equality:2} is
non-zero since an effective divisor cannot be numerically trivial. 
By~\ref{equality:3} the self-intersection of the right hand side 
of~\ref{equality:2} is zero, so condition $(*)$ implies that 
for any $1\le s\le p$ either all curves from $G_s$ appear on the 
right hand side of~\ref{equality:2} with non-zero coefficients,
or no curve from $G_s$ appears there at all.
The union $\bigcup_{i, t} E_i^t\cup\bigcup_{j}\tilde{C_j}$ is connected,
and by condition $(*)$ any two curves $D_1\in G_{s_1}$, $D_2\in G_{s_2}$ 
are disjoint for $s_1\neq s_2$. Hence for any $1\le s\le p$ there are curves
$D\in G_s$ and $D'\in G'$ such that $D$ intersects $D'$. 
Since all the curves $D'\in G'$
appear on the left hand side of~\ref{equality:2} with non-zero coefficients, 
the intersection of the left hand side and the right hand side 
of~\ref{equality:2} is strictly positive; this contradicts~\ref{equality:3}.
\end{proof}

\begin{corollary}\label{corollary:3-points}
Three points cannot appear simultaneously as non-canonical centers on $X$.
\end{corollary}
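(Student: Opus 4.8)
The plan is to imitate Examples~\ref{example:3-pt-0-pt} and~\ref{example:3-pt-3-pt}, reducing the statement to a single application of Lemma~\ref{lemma:negative-definite} after analysing the configuration of lines in an auxiliary plane section. So suppose for contradiction that three singular points $P_1$, $P_2$, $P_3$ appear simultaneously as non-canonical centers. By Lemma~\ref{lemma:3-points} they are not collinear, so they span a plane $\Pi_0=\ang{P_1, P_2, P_3}$, and by Lemma~\ref{lemma:2-points} each of the three lines $L_1=\ang{P_2, P_3}$, $L_2=\ang{P_1, P_3}$, $L_3=\ang{P_1, P_2}$ is contained in $X$. Since $\deg X=4$, the plane section decomposes as $\left.X\right|_{\Pi_0}=L_1+L_2+L_3+L_4$ for a residual line $L_4$. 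The crucial elementary observation I would record first is that every node of $X$ lying on $\Pi_0$ is a singular point of the plane quartic $\left.X\right|_{\Pi_0}$, hence one of the pairwise intersection points of the four lines; in particular no node of $X$ can sit in the interior of a single $L_i$, and the only nodes of $X$ on $\Pi_0$ besides $P_1$, $P_2$, $P_3$ lie among the points $Q_i=L_4\cap L_i$.

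Next I would take a general hyperplane section $H\supset\Pi_0$, which is a $K3$ surface with Du~Val singularities of type $A$ by Lemmas~\ref{lemma:general-section} and~\ref{lemma:general-section-mult-line}, and pass to its minimal resolution $\tilde{H}$, keeping the notation of Lemma~\ref{lemma:negative-definite}. I designate the points $P_i$ as the centers to be excluded, so that their exceptional curves lie in $G'$, while $G$ consists of the proper transforms $\tilde{L_j}$ together with the exceptional curves over those $Q_i$ that happen to be singular. The structural point that makes the whole argument work is that, since $P_1$, $P_2$, $P_3$ are exactly the three pairwise intersections $L_i\cap L_j$, the curves $\tilde{L_1}$, $\tilde{L_2}$, $\tilde{L_3}$ are pairwise \emph{non-adjacent} on $\tilde{H}$: they meet only the designated exceptional curves of $G'$ sitting over the triangle vertices. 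Consequently every edge of the dual graph of $G$ comes from the residual line $L_4$ or from a non-designated node, and the high-valence junctions at the $P_i$ are absorbed into $G'$.

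It then remains to run through the possible positions of $L_4$ and verify condition~$(*)$ in each. In the generic case $L_4$ avoids the vertices and meets the triangle in three distinct points $Q_1$, $Q_2$, $Q_3$ (these are automatically distinct, since a coincidence would put the common point on two edges and hence force it to be a vertex). Each $Q_i$ carries two of the four lines, so it gives an $A_1$ singularity on $H$, and $G$ is a star centered at $\tilde{L_4}$ whose three arms have length $1$ (when $Q_i$ is smooth) or $2$ (when $Q_i$ is a node); the resulting dual graphs are $D_4$, $D_5$, $E_6$ and $E_6^{(1)}$, all subgraphs of the affine diagram $E_6^{(1)}$. In the degenerate cases\t when $L_4$ passes through a vertex (forcing three concurrent lines there, and correspondingly a longer chain inside $G'$) or coincides with one of the edges (so that $\left.X\right|_{\Pi_0}$ is non-reduced and Lemma~\ref{lemma:general-section-mult-line} produces at most one further $A_1$ point on that edge)\t the connections through the designated points disappear and $G$ breaks up into components of type $A_n$. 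In every case $G$ satisfies condition~$(*)$ (its connected components are subgraphs of affine Dynkin diagrams, cf.~Remark~\ref{remark:diagrams}), and Lemma~\ref{lemma:negative-definite} shows that $P_1$, $P_2$, $P_3$ cannot be non-canonical centers simultaneously, a contradiction.

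The routine parts are the degree count and the identification of the singularity types through Lemmas~\ref{lemma:general-section} and~\ref{lemma:general-section-mult-line}; the main obstacle is the bookkeeping of the degenerate configurations, that is, checking that when $L_4$ meets a vertex or doubles an edge the dual graph of $G$ never grows beyond a disjoint union of subgraphs of affine Dynkin diagrams. Here the saving observation is exactly that the designated exceptional curves over $P_1$, $P_2$, $P_3$ carry all of the branching, so that whatever happens at the vertices only enlarges $G'$ and keeps the components of $G$ of type $A_n$. I would also keep Lemmas~\ref{lemma:2-points-on-Eckardt-line} and~\ref{lemma:2-points-collinear-to-Eckardt-point} in reserve to dispose of the Eckardt degeneracies that could otherwise interfere with the generality of $H$.
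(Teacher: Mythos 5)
Your proof is correct and follows essentially the same route as the paper's: the same reduction via Lemmas~\ref{lemma:3-points} and~\ref{lemma:2-points} to the plane $\Pi_0$ with residual line $L_4$, the same passage to a general hyperplane section analysed via Lemmas~\ref{lemma:general-section} and~\ref{lemma:general-section-mult-line}, and the same application of Lemma~\ref{lemma:negative-definite} after checking, in the same three cases ($L_4$ generic, through a vertex, or doubling an edge), that the dual graph of $G$ is a disjoint union of subgraphs of affine Dynkin diagrams. The only caveat is that your opening observation\t that every node of $X$ in $\Pi_0$ lies at a pairwise intersection of the four lines\t fails when the plane section is non-reduced (a node of $X$ may sit in the interior of the doubled edge), but your treatment of that case still goes through, since such a node contributes only an $A_1$ point of $H$ on that edge and Lemma~\ref{lemma:general-section-mult-line} then allows no further extra singular points there.
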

\begin{proof}
Assume that the points $P_1$, $P_2$ and $P_3$ are non-canonical centers. By
Lemma~\ref{lemma:3-points} they are not collinear, and by
Lemma~\ref{lemma:2-points} the lines $L_{ij}=\ang{P_i, P_j}$ are contained
in $X$. Let $\Pi_0=\ang{P_1, P_2, P_3}$. Then
$$\left.X\right|_{\Pi_0}=L_{12}+L_{23}+L_{13}+L,$$ 
where $L$ is a line
(possibly coinciding with one of the lines $L_{ij}$). Let
$\pi:\tilde{H}\to H$ be a minimal resolution
of singularities of a general hyperplane section $H$ passing through
$\Pi_0$. Let $G$ be the collection of proper transforms of $L$ and
$L_{ij}$, and of all exceptional curves of
$\pi$ except those that lie over the points $P_i$. Let $\Gamma$ be the dual
graph of~$G$.

If $L$ coincides with one of the lines $L_{ij}$ (say, with $L_{12}$), then
by Lemmas~\ref{lemma:general-section}
and~\ref{lemma:general-section-mult-line}
the surface $H$ has at worst $A_2$ singularities at $P_1$ and~$P_2$ and~$A_1$ 
singularities at $P_3$ and possibly at one
more point $P\in L_{12}$. One easily checks that the only component 
of $\Gamma$ that is not a point is of type $A_2$.

If $L$ coincides with none of the lines $L_{ij}$ but passes through one of
their intersection points $P_i$, say through $P_1$, then
by Lemma~\ref{lemma:general-section} the surface $H$ has at worst an
$A_2$ singularity at $P_1$, singularities of type $A_1$ at the points
$P_2$ and $P_3$ and possibly one more $A_1$ singularity at the point
$P=L\cap L_{23}$
(if $X$ itself is singular at $P$). So $\Gamma$ is the union of two
graphs that consists of single points 
with a graph of type $A_3$ or $A_2$, depending on
whether $X$ is singular at $P$ or not.

If $L$ passes through none of the points $P_i$ then
by Lemma~\ref{lemma:general-section} all singularities of $H$ are of type
$A_1$ and $\Gamma$ is a subgraph of a graph of type~$E_6^{(1)}$ (cf
Examples~\ref{example:3-pt-3-pt} and~\ref{example:3-pt-0-pt}).

In any case the intersection form on the subspace
$W\subset\NS^1(\tilde{H})$
generated by $G$ satisfies the conditions of
Lemma~\ref{lemma:negative-definite}; hence $P_1$, $P_2$ and $P_3$
do not appear simultaneously as non-canonical centers.
\end{proof}

\begin{corollary}\label{corollary:2-lines}
Two lines cannot appear simultaneously as non-canonical centers on $X$.
\end{corollary}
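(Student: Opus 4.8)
The plan is to prove Corollary~\ref{corollary:2-lines} by the same machinery used in Corollary~\ref{corollary:3-points}, namely by exhibiting a plane section whose resolution carries a configuration of $(-2)$-curves satisfying the hypothesis $(*)$ of Lemma~\ref{lemma:negative-definite}. Suppose two lines $L_1$ and $L_2$ are simultaneous non-canonical centers. By Lemma~\ref{lemma:2-spatial-lines} they cannot be skew, so they meet in a point, say $R$, and hence span a plane $\Pi_0=\ang{L_1,L_2}$. First I would restrict $X$ to this plane: since $X$ has degree $4$ and contains the two lines, we get a decomposition $\left.X\right|_{\Pi_0}=L_1+L_2+C$ where $C$ is a residual conic (possibly reducible, possibly sharing a component with $L_1$ or $L_2$). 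I would then take a general hyperplane section $H\supset\Pi_0$ and its minimal resolution $\pi:\tilde H\to H$, and assemble into the set $G'$ the proper transforms $\tilde L_1,\tilde L_2$ together with all the exceptional divisors lying over singular points \emph{on} $L_1\cup L_2$ that force these lines to have high multiplicity; the set $G$ collects the remaining curves (the components of $\tilde C$ not equal to $\tilde L_i$, and the exceptional divisors over the other nodes of $H$).

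The key step is the bookkeeping of $H$'s singularities via Lemmas~\ref{lemma:general-section} and~\ref{lemma:general-section-mult-line}, followed by a case analysis on how $C$ degenerates. Since $\tilde H$ is a K3 surface and all its singularities are Du Val of type $A$, every curve in $G\cup G'$ is a $(-2)$-curve, so by Remark~\ref{remark:diagrams} verifying $(*)$ amounts to checking that the connected components of the dual graph of $G$ are subgraphs of affine Dynkin diagrams. I would organize the cases by the shape of $C$: (a) $C$ is a smooth conic meeting $L_1\cup L_2$ transversally; (b) $C$ splits as two lines, one or both of which pass through the intersection point $R$; (c) $C$ contains $L_1$ or $L_2$ as a component (so some line appears with multiplicity $\ge 2$, which by Lemma~\ref{lemma:general-section-mult-line} produces an $A_{k-1}$ chain along that line). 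In each case I expect the full configuration $\tilde L_1\cup\tilde L_2\cup\tilde C\cup(\text{exceptional curves})$ to be a cycle or chain of $(-2)$-curves whose dual graph sits inside an affine diagram of type $A_n^{(1)}$, $D_n^{(1)}$, or one of the exceptional affine diagrams, exactly as in Examples~\ref{example:3-pt-3-pt} and~\ref{example:3-pt-0-pt}.

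The main obstacle will be the configurations where the lines carry extra singular points or where the residual conic degenerates so that several lines concur at a single node, since these are precisely the situations that could push the dual graph \emph{beyond} an affine Dynkin diagram (for instance producing an overextended $D$- or $E$-type graph, which is no longer negative semidefinite). Here I would lean on the constraints already established: a line in $X$ carries at most three collinear singular points (Remark~\ref{remark:no-line-3-points}), and Lemmas~\ref{lemma:2-points-and-line} and~\ref{lemma:1-point-and-line} restrict which points and lines can be simultaneous non-canonical centers. I would also use Lemma~\ref{lemma:general-section-mult-line}(2) to bound the number of new nodes $\PP$ on each multiple line by $d-n-1=3-n$, which caps the length of the $A$-chains and keeps the total vertex count within the affine range. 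Once $(*)$ is verified in every case, Lemma~\ref{lemma:negative-definite} immediately yields that the curves of $G'$\t in particular $\tilde L_1$ and $\tilde L_2$, hence $L_1$ and $L_2$\t cannot be simultaneous non-canonical centers, completing the proof.
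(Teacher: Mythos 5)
Your overall skeleton does match the paper's proof: reduce to coplanar lines via Lemma~\ref{lemma:2-spatial-lines}, write $\left.X\right|_{\Pi_0}=L_1+L_2+Q$, pass to a general hyperplane section $H\supset\Pi_0$ and its minimal resolution, control $\Sing H$ by Lemmas~\ref{lemma:general-section} and~\ref{lemma:general-section-mult-line}, and conclude by Lemma~\ref{lemma:negative-definite}. But your setup of that lemma contains a genuine error. In Lemma~\ref{lemma:negative-definite} the set $G'$ is not free for you to choose: it consists exactly of the proper transforms of the lines that are the putative centers and of the exceptional curves over the putative point centers. Here the only putative centers are $L_1$ and $L_2$, so $k=2$, $l=0$, $G'=\{\tilde{L_1},\tilde{L_2}\}$, and \emph{all} exceptional curves of $\pi$\t including those over singular points lying on $L_1\cup L_2$\t must be placed in $G$. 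You instead move the latter into $G'$. With that split the lemma (as stated) only yields that $L_1$, $L_2$ \emph{and all those points} cannot simultaneously be non-canonical centers, which does not give Corollary~\ref{corollary:2-lines}. One can bridge this gap (if $\mult_{L_i}\H>\mu$ then $\mult_P\H\ge\mult_{L_i}\H>\mu$ for every $P\in L_i$, so the multiplicity inequalities actually used in the proof of Lemma~\ref{lemma:negative-definite} do hold for your extra $G'$-curves), but you never make this argument\t and it amounts to re-proving the lemma rather than applying it; moreover, some of your extra $G'$-curves lie over points that are smooth on $X$ (the new $A_{k-1}$-points on multiple lines), which the lemma's $G'$ does not accommodate at all.

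The second, related, error is in what you propose to verify. You expect ``the full configuration $\tilde{L_1}\cup\tilde{L_2}\cup\tilde{C}\cup(\text{exceptional curves})$'' to have dual graph inside an affine Dynkin diagram. This can never happen: every singular point of $H$ lies on the plane section, so this configuration supports, with positive coefficients, the divisor $\pi^*(\left.H'\right|_H)$ of self-intersection $\deg X=4>0$, and hence the intersection form on its span is never negative semidefinite. Condition $(*)$ must be checked only for the curves of $G$, i.e., with $\tilde{L_1}$ and $\tilde{L_2}$ (and, in the paper's split, nothing else) removed; this is precisely why the paper's case-by-case analysis records graphs of types $D_4^{(1)}$, $D_5^{(1)}$, $D_6^{(1)}$, $D_5$, $D_6$, $D_7$, $E_6$, $A_k$ for the residual conic together with the exceptional curves, never for the whole plane section. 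As written, the step on which your whole argument rests would fail in every single case; the proof goes through only after you adopt the paper's split and verify $(*)$ for the larger set $G$, and the bookkeeping of the $A_k$-singularities of $H$ at points of $L_1\cup L_2$\t the part your plan shunts into $G'$\t is exactly the content of the paper's case analysis.
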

\begin{proof}
Assume that the lines $L_1$ and $L_2$ are non-canonical centers. By
lemma~\ref{lemma:2-spatial-lines} they are coplanar.
Let $\Pi_0=\ang{L_1, L_2}$. Then
$$\left.X\right|_{\Pi_0}=L_1+L_2+Q,$$ 
where $Q$ is a (possibly reducible) conic.
Let $\pi:\tilde{H}\to H$ be a minimal resolution of singularities of a
general hyperplane section $H$ passing through $\Pi_0$.
Let $G$ be the collection of proper transforms of the components of $Q$
and all exceptional curves of $\pi$. Let $\Gamma$ be the dual graph.

If the conic $Q$ is irreducible then the  
only component of $\Gamma$ that is not a point
(such a component exists if $Q$ contains singularities of $X$)
is a subgraph of a graph of type $D_5$ or $D_4^{(1)}$, depending on
whether $Q$ passes through the point $P=L_1\cap L_2$ or not
(in the former case by Lemma~\ref{lemma:general-section}
there are at most two singularities of type $A_1$ and one of type $A_2$ on
$Q\subset H$, and in the latter case there are at most four singularities
of type $A_1$).

If $Q=L_3+ L_4$, $L_3\neq L_4$, $L_3\not\ni P$, $L_4\not\ni P$ and the point
$P'=L_3\cap L_4$ lies neither on $L_1$ nor on $L_2$, 
then by Lemma~\ref{lemma:general-section} the surface $H$ has only $A_1$
singularities, and the only component of $\Gamma$ that is not a point
is a subgraph of a graph of type $D_6^{(1)}$ or $D_5^{(1)}$
depending on whether the point
$P'=L_3\cap L_4$ is singular on $X$ or not.

If $Q=L_3+ L_4$, $L_3\neq L_4$, $L_3\not\ni P$, $L_4\not\ni P$ and the point
$P'=L_3\cap L_4$ lies on $L_1$, then by Lemma~\ref{lemma:general-section} 
the surface $H$ has only $A_1$ singularities except for a possible 
$A_2$ singularity at $P'$, and the only component 
of $\Gamma$ that is not a point is a subgraph of a graph of type $E_6$.

If $Q=L_3+ L_4$, $L_3\neq L_1$, $L_3\neq L_2$, $L_3\ni P$, $L_4\not\ni P$,
then by Lemma~\ref{lemma:general-section} the surface $H$ has only $A_1$
singularities except for a possible singularity of type $A_2$ at the point
$P$, and the only component of $\Gamma$ that is not a point
is a subgraph of a graph of type $D_7$.

If $Q=L_3+ L_4$, the lines $L_i$ are distinct for $1\le i\le 4$,
and $L_3, L_4\ni P$,
then by Lemma~\ref{lemma:general-section} the surface $H$
is only singular at the point $P$ and this singularity is at worst
$A_3$, so the only component of $\Gamma$ that is not a point
is a subgraph of a graph of type $D_5$.

If $Q=2L$, $L\not\ni P$, then by Lemmas~\ref{lemma:general-section}
and~\ref{lemma:general-section-mult-line} the surface
$H$ has at worst $A_2$ singularities at the points $L\cap L_i$ and possibly
one more singularity of type $A_1$ at some point $P'\in L$;
the only component of $\Gamma$ that is not a point
is a subgraph of a graph of type $E_6$.

If $Q=2L$, $L\neq L_i$, $L\ni P$, then by
Lemmas~\ref{lemma:general-section}
and~\ref{lemma:general-section-mult-line} the surface
$H$ has at worst an $A_3$ singularity at the point $P$ and at most two
singularities of type $A_1$ at some points $P', P''\in L$;
the only component of $\Gamma$ that is not a point
is a subgraph of a graph of type $D_6$.

If $Q=L_1+L$, $L\not\ni P$, then by Lemmas~\ref{lemma:general-section}
and~\ref{lemma:general-section-mult-line} the surface
$H$ has at worst $A_2$ singularities at the points $P$ and $P'=L\cap L_1$
and possibly one more singularity of type $A_1$ at some point $P''\in
L_1$; the graph $\Gamma$ has at most two components that are not points, one
of type $A_2$ and the other of type $A_k$ with $k\le 4$.

Finally, if $Q=L_1+L$ with $L\ni P$ ($L$ may coincide with $L_1$ or~$L_2$),
then by Lemmas~\ref{lemma:general-section}
and~\ref{lemma:general-section-mult-line} the surface
$H$ has at worst an $A_3$ singularity at the point $P$
(and at worst $A_2$ singularities on multiple lines, the $A_2$ case 
arising only if $L=L_1$), and all components of $\Gamma$ that are not points
are of type $A_k$ with $k\le 4$.

In any case the intersection form on the subspace
$W\subset\NS^1(\tilde{H})$
generated by $G$ satisfies the conditions of
Lemma~\ref{lemma:negative-definite}; hence $L_1$ and $L_2$
do not appear simultaneously as non-canonical centers.
\end{proof}

\begin{corollary}\label{corollary:line-and-point}
A line and a point outside it cannot appear simultaneously as non-canonical
centers on $X$.
\end{corollary}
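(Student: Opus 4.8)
The plan is to prove Corollary~\ref{corollary:line-and-point} by the same negative-definiteness mechanism used in Corollaries~\ref{corollary:3-points} and~\ref{corollary:2-lines}: assume a point $P$ and a line $L$ (with $P\notin L$) are simultaneously non-canonical centers, produce a suitable plane $\Pi_0$ containing relevant curves, take a general hyperplane section $H$ through $\Pi_0$, resolve its Du Val singularities, and verify that the dual graph $\Gamma$ of the appropriate collection $G$ of curves satisfies condition $(*)$ of Lemma~\ref{lemma:negative-definite}. The conclusion that $P$ and $L$ cannot both be non-canonical centers then follows immediately from that lemma.

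First I would fix the plane. The natural choice is $\Pi_0=\ang{P,L}$, the span of the point and the line, which is a genuine two-dimensional plane precisely because $P\notin L$. Since $L\subset X$ by Theorem~\ref{theorem:Mella} and $P\in\Sing X$, the restriction $\left.X\right|_{\Pi_0}$ is a quartic plane curve containing $L$ as a component and passing through $P$; write $\left.X\right|_{\Pi_0}=L+C$ where $C$ is a residual cubic (possibly reducible or with multiple components) through $P$. The role played here by $L$ and $P$ mirrors the role of the two lines in Corollary~\ref{corollary:2-lines}: $L$ is one prescribed component of $\left.X\right|_{\Pi_0}$ with $\mult_L\H>\mu$, and $P$ is a prescribed singular point with $\mult_P\H>\mu$, so both belong to the distinguished set $G'$, while the components of $C$ and the remaining exceptional curves over the other singularities of $H$ make up $G$.

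The main work, as in Corollary~\ref{corollary:2-lines}, is the \emph{case analysis} of how $C$ decomposes and how its components meet $L$ and pass through the singular points of $H$, together with the computation (via Lemmas~\ref{lemma:general-section} and~\ref{lemma:general-section-mult-line}) of the Du Val types at each singularity. One must enumerate the possibilities: $C$ irreducible; $C$ a line plus a conic; $C$ three lines (distinct, or with coincidences); and the subcases according to whether the singular point $P$ lies on $L$-meeting components, which intersection points of the components of $C$ are singular on $X$, and whether any component of $C$ coincides with $L$ or is taken with multiplicity. For each configuration I would record the dual graph $\Gamma$ of $G$ and check it is a disjoint union of subgraphs of affine Dynkin diagrams, so that condition $(*)$ holds; since $L$ and $P$ lie in $G'$ and the exceptional curve over $P$ meets at least one curve of $G$ (because $C$ passes through $P$ and the configuration $\bigcup E_i^t\cup\bigcup\tilde{C_j}$ is connected), the hypotheses of Lemma~\ref{lemma:negative-definite} are met.

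The hard part will be ensuring completeness of the enumeration and confirming in each case that the relevant graph does not exceed an affine Dynkin diagram — in particular controlling the worst singularities that arise when $C$ has a multiple component (here Lemma~\ref{lemma:general-section-mult-line} bounds the $A_k$-type) or when several components of $C$ concur with $L$ at a single point of $X$. One subtlety, not present in Corollary~\ref{corollary:2-lines}, is that $L$ itself carries one or two singular points of $X$ by Theorem~\ref{theorem:Mella}, and these produce exceptional curves that attach to $\tilde{L}$; I must check that the resulting chains stay within the allowed diagram types (typically $D$- or $E$-type, possibly affine). Provided every configuration yields a graph satisfying $(*)$, Lemma~\ref{lemma:negative-definite} gives the contradiction and completes the proof.
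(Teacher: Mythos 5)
Your proposal follows essentially the same route as the paper's own proof: the same plane $\Pi_0=\ang{L,P}$, the same decomposition $\left.X\right|_{\Pi_0}=L+C$ with $\tilde{L}$ and the exceptional curves over $P$ placed in $G'$, and the same reduction to condition $(*)$ of Lemma~\ref{lemma:negative-definite} via the Du Val-type computations of Lemmas~\ref{lemma:general-section} and~\ref{lemma:general-section-mult-line}. The case analysis you defer is exactly what the paper carries out (yielding graphs of types such as $D_4$, $D_6$, $D_7$, $E_7$, $E_7^{(1)}$ and short $A_k$ chains, all within affine Dynkin diagrams), so your plan is correct and complete in outline.
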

\begin{proof}
Assume that a line $L$ and a point $P\not\in L$ are non-canonical centers. Let
$\Pi_0=\ang{L, P}$,
$\left.X\right|_{\Pi_0}=L+C$. 
Let $\pi:\tilde{H}\to H$ be a minimal resolution of singularities of a
general hyperplane section $H$ passing through~$\Pi_0$.
Let $G$ be the collection of proper transforms of components of $C$ and
all exceptional curves of $\pi$ except those that lie over $P$. Let
$\Gamma$ be the dual graph.

If $C$ is an irreducible cubic\footnote{
In this case one can also argue as follows, avoiding the use of
Lemma~\ref{lemma:negative-definite}: if $L$ and $P$ are non-canonical centers,
after an involution $\tau_P$ the curve $C$ becomes a non-canonical center
that is impossible by Theorem~\ref{theorem:Mella}.}
(singular at $P$), then $H$ has singularities of
type $A_1$, and $\Gamma$ is a subgraph of a graph of type $D_4$.

If $C=Q+ L_1$, where $Q$ is an irreducible conic, then $L_1\ni P$ 
(in particular, $L_1\neq L$), and
the only component of $\Gamma$ that is not a point
(if any) is a subgraph of a graph of type $D_6$.

If $C=L_1+ L_2+ L_3$ and the lines $L$, $L_1$, $L_2$ and $L_3$ are
distinct and the latter three lines pass through the point $P$,
then by Lemma~\ref{lemma:general-section}
the surface $H$ has only singularities of type $A_1$ outside~$P$,
and $\Gamma$ has at most three components that are not points, each of them
of type $A_2$.

If $C=L_1+L_2+ L_3$, the lines $L$, $L_1$, $L_2$ and $L_3$ are 
distinct, $L_1$ and $L_2$ pass through $P$, and $L_3$ passes through the
intersection point $P_1=\nlb L\cap\nlb L_1$,
then by Lemma~\ref{lemma:general-section} the surface $H$ has only $A_1$
singularities except for a possible $A_2$ singularity at the point $P_1$,
and the only component of $\Gamma$ that is not a point
is a subgraph of a graph of type $D_7$.

If $C=L_1+ L_2+ L_3$, the lines $L$, $L_1$, $L_2$ and $L_3$ are
distinct,
$L_1$~and $L_2$ pass through $P$, and $L_3$ passes neither through $P$,
nor through the intersection points of the lines $L$ and $L_1$ or $L$ and
$L_2$, then by Lemma~\ref{lemma:general-section} the surface
$H$ has only $A_1$ singularities, and the only component 
of $\Gamma$ that is not a point is a subgraph of a graph of type~$E_7^{(1)}$.

If $C=2L_1+L_2$, $P\not\in L_2$ and $L_2\neq L$, then the surface $H$ has 
only $A_1$ singularities except for possible $A_2$ 
singularities at $P$ and $P_1=L\cap L_1$, 
and the only component of $\Gamma$
is a subgraph of a graph of type $E_7$.

If $C=2L+L_1$, $P\in L_2$, $L_2\neq L$, then $\Gamma$ has at most two 
components that are not points, each of type $A_k$ with $k\le 4$.

If $C=2L_1+L$, then the only component of $\Gamma$ that is not a point
is of type $A_k$ with $k\le 5$. 

If $C=3L_1$, then the only component of $\Gamma$ that is not a point
is of type $A_k$ with $k\le 6$.

In any case the intersection form on the subspace
$W\subset\NS^1(\tilde{H})$,
generated by $G$, satisfies the conditions of
Lemma~\ref{lemma:negative-definite}, hence $L$ and $P$
do not appear simultaneously as non-canonical centers.
\end{proof}

\begin{proof}[Proof of Proposition~\ref{proposition:2-centers}]
By Theorem~\ref{theorem:Mella} all non-canonical centers are either lines or
singular points. If one of the centers is a line $L$, then by
Corollary~\ref{corollary:2-lines} all other non-canonical centers are points,
and by Corollary~\ref{corollary:line-and-point} these points lie on $L$;
finally, by Lemma~\ref{lemma:2-points-and-line} there can be at most one
such point, and by Lemma~\ref{lemma:1-point-and-line} the line $L$
contains exactly two singular points. If all non-canonical
centers are points, then by Corollary~\ref{corollary:3-points} there are
only two of them, and by Lemma~\ref{lemma:2-points} they lie on a line
contained in $X$. \end{proof}

\begin{remark}\label{remark:2-centers}
The statement of Proposition~\ref{proposition:2-centers}
(as well as all previous statements)
remains true if instead of two non-canonical centers one considers
a center of non-canonical singularities and a center of strictly canonical
singularities of $\H$. 
\end{remark}

Proposition~\ref{proposition:2-centers} 
(or rather Remark~\ref{remark:2-centers}) implies 
Theorem~\ref{theorem:relations} using the calculations of 
Lemmas~\ref{lemma:line-2-points-action},
\ref{lemma:point-action-2-points} and \ref{lemma:point-action-3-points}
in a standard way (see~\cite[Chapter~V, \S 7]{Manin} 
or~\cite[3.2.4]{IskovskikhPukhlikov} for a very detailed proof).
Note that Lemmas~\ref{lemma:2-points-on-Eckardt-line}
and~\ref{lemma:2-points-collinear-to-Eckardt-point}
ensure that the calculations of the former Lemmas are applicable, i.\,e. 
that for two points $P_1$ and $P_2$ that are non-canonical centers the line
$L=\ang{P_1, P_2}$ is not an Eckardt line if $L$ does not contain a third 
singular point, and that the third singular point is not an Eckardt point if it
does.

\section{Algebraically non-closed fields}
\label{section:non-closed-fields}

One of the results of~\cite{Mella} (namely, \cite[Theorem~5]{Mella})
states that the main theorems
of~\cite{Mella} (birational rigidity of $X$ and description of generators 
of $\Bir(X)$) hold over algebraically non-closed field $\k$ of characteristic 
$0$ as well as over~$\C$. Unfortunately, there is a gap in the proof
(the fact that three conjugate points cannot form a non-canonical center is 
derived from the statement that even two points cannot, 
and this is not true,
see Example~\ref{example:2-points-involution} below).
The aim of this section is to provide a patch for this gap.

\begin{example}[{cf.~\cite[Chapter~V,~1.4]{Manin}}]
\label{example:2-points-involution-closed-field}
Let $P_1, P_2\in\Sing\Xk$ be two points contained in a line $L\subset\Xk$.
Let $E$ be a section of the associated elliptic fibration
arising from the line $L$.
Take a fiberwise reflection in the section $E$, and denote the corresponding
birational involution of $\Xk$ by~$\tau_{P_1P_2}$. If $P_1$ and $P_2$ are both
non-canonical centers then $\tau_{P_1P_2}$ untwists both of them 
(see Lemma~\ref{lemma:2-points-on-Eckardt-line} and 
Lemma~\ref{lemma:2-points-action} below). On the other hand,
starting with the linear system $|\O(1)|$ and taking the strict transform
with respect to $\tau_{P_1P_2}:\Xk\dasharrow \Xk$, one obtains a mobile 
linear system $\H$ such that $P_1$ and $P_2$ are non-canonical centers with 
respect to $\frac{1}{\mu}\H$, provided that $\tau_{P_1P_2}$ is not regular. 
If $X$ is sufficiently general so that $L$ is not an Eckardt line, 
Lemma~\ref{lemma:2-points-action} implies that 
the involution $\tau_{P_1P_2}$ is indeed non-regular.
\end{example}

\begin{example}\label{example:2-points-involution}
Assume that the singular points $P_1$ and $P_2$ are conjugate (i.\,e.
$\{P_1, P_2\}$ is a $\k$-point of $X$ of degree $2$), so that the line
$L=\ang{P_1, P_2}$ is defined over $\k$.
Then the involution $\tau_{P_1P_2}$ is also defined over $\k$. In particular, 
$\{P_1, P_2\}$ can be a non-canonical center on $X$ (provided that $X$ 
is sufficiently general).
\end{example}

\begin{remark}\label{remark:Mella-vs-Manin}
In the setting of Example~\ref{example:2-points-involution} the line
$L$ is defined over~$\k$ and so is the involution $\tau_L$. One has
$$
\tau_{P_1P_2}=\tau_{P_1}\circ\tau_L\circ\tau_{P_2}.
$$
\end{remark}

\begin{lemma}\label{lemma:2-points-action}
Let a line $L\subset X$ contain exactly two singular points $P_1$ and 
$P_2$ of $X_{\ov{\k}}$.
Assume that $L$ is not an Eckardt line.
Then
\begin{gather*}
\mu(\chi\circ\tau_{P_1P_2})=13\mu(\chi)-6\nu_{P_1}(\chi)-6\nu_{P_2}(\chi),\\
\nu_{P_1}(\chi\circ\tau_{P_1P_2})=
14\mu(\chi)-7\nu_{P_1}(\chi)-6\nu_{P_2}(\chi),\\
\nu_{P_2}(\chi\circ\tau_{P_1P_2})=
14\mu(\chi)-6\nu_{P_1}(\chi)-7\nu_{P_2}(\chi),\\
\nu_L(\chi\circ\tau_{P_1P_2})=8\mu(\chi)-4\nu_{P_1}(\chi)-4\nu_{P_2}(\chi)
+\nu_L(\chi).
\end{gather*}
\end{lemma}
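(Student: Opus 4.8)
The plan is to reduce the computation to a linear-algebra calculation on the Picard group of a suitable resolution, exactly as in the proofs of Lemmas~\ref{lemma:line-1-point-action} through~\ref{lemma:point-action-3-points}, but now for the involution $\tau_{P_1P_2}$ arising as a fiberwise reflection in the section $E$ of the elliptic fibration corresponding to $L$. First I would pass to $\tilde{X}$, obtained by blowing up $X$ first at the two singular points $P_1$, $P_2$ and then along the strict transform of $L$; by Remark~\ref{remark:codim-2} (applicable since $L$ is not an Eckardt line) the induced involution $\tilde{\tau}_{P_1P_2}$ is regular in codimension~$1$, so it acts on $\Pic(\tilde{X})$ and this action encodes the transformation of degrees and multiplicities. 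Let $h$, $e_1$, $e_2$, $e_L$ denote the pullback of a hyperplane class and the exceptional divisors over $P_1$, $P_2$ and $L$ respectively, and restrict everything to a general fiber $C$ of the elliptic fibration $\psi:\tilde{X}\to\P^2$, noting that $C$ is an elliptic curve.

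The key step is to determine $\tilde{\tau}_{P_1P_2}^*$ on the free module generated by these classes. As in Lemma~\ref{lemma:point-action-3-points}, the kernel $K$ of the restriction $\Pic(\tilde{X})\to\Pic(C)$ is generated by the preimage of a general line in $\P^2$, namely $h-e_1-e_2-e_L$, together with the divisors swept out by the reducible fibers, of which $e_L$ is one; modulo $K$ the action is governed by the reflection law on $C$. Since $\tau_{P_1P_2}$ is by definition the fiberwise reflection $R_E$ in the section $E$ arising from $L$ (equivalently $R_E$ with $E\numeq-(e_1+e_2)$ in the group law on $C$, up to $2$-torsion, per footnote~\ref{footnote:popolam}), I would write $\tilde{\tau}_{P_1P_2}^*$ of each generator as its reflection image plus an unknown $K$-component, set up the ansatz
\begin{gather*}
\tilde{\tau}_{P_1P_2}^*h=a h+b_1 e_1+b_2 e_2+b_L e_L,\\
\tilde{\tau}_{P_1P_2}^*e_1=c h+\ldots,\quad \tilde{\tau}_{P_1P_2}^*e_2=c' h+\ldots,\quad \tilde{\tau}_{P_1P_2}^*e_L=\ldots,
\end{gather*}
and then pin down the unknown coefficients by computing intersection numbers on the surface $S$ obtained as the preimage of a general line in $\P^2$, exactly as in the cited Lemma~5.1.3 of~\cite{IskovskikhPukhlikov}. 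The coefficients $13$, $14$, $6$, $7$, $8$, $4$ appearing in the statement are the entries of this matrix once it is expressed in terms of $\mu(\chi)=-\frac{1}{4}(\H\cdot h^{\,2}\cdots)$-type degree and the multiplicities $\nu_{P_i}(\chi)=\mult_{P_i}\H$, $\nu_L(\chi)=\mult_L\H$.

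The only genuine obstacle, and the reason a separate lemma is warranted rather than a one-line appeal to the earlier results, is that the reflection here is taken in $E$ itself rather than in a half-section, so the action differs from that of $\tau_L$ of Example~\ref{example:line-2-points-involution} by the relation recorded in Remark~\ref{remark:Mella-vs-Manin}, namely $\tau_{P_1P_2}=\tau_{P_1}\circ\tau_L\circ\tau_{P_2}$. In fact the cleanest route, which I expect to carry out, is to bypass the direct intersection computation entirely and instead \emph{derive} the four formulas by composing the already-established transformation laws: apply Lemma~\ref{lemma:point-action-2-points} for $\tau_{P_2}$, then Lemma~\ref{lemma:line-2-points-action} for $\tau_L$, then Lemma~\ref{lemma:point-action-2-points} again for $\tau_{P_1}$, tracking how $\mu$, $\nu_{P_1}$, $\nu_{P_2}$ and $\nu_L$ evolve under each factor and simplifying the resulting composite linear map. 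The bookkeeping is routine but must be done carefully because the two point-involutions swap the roles of $P_1$ and $P_2$ in the intermediate multiplicities; verifying that the composite is an involution (i.e.\ that squaring the resulting $4\times4$ matrix gives the identity on the relevant subspace) provides a useful internal consistency check on the final coefficients.
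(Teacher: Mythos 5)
Your preferred route (the composition argument in your last paragraph) is correct, but it is genuinely different from the paper's proof; ironically, the computation you sketch in your first two paragraphs and then set aside \emph{is} the paper's proof, which is literally ``Analogous to that of Lemma~\ref{lemma:line-1-point-action}. Note that Remark~\ref{remark:codim-2} is also applicable in this case'', i.e.\ blow up $P_1$, $P_2$ and the strict transform of $L$, use Remark~\ref{remark:codim-2} to see the induced involution is regular in codimension one, and compute its action on $\Pic(\tilde{X})$ as in \cite[Lemma~5.1.3]{IskovskikhPukhlikov}. Your alternative does work, and the arithmetic closes: writing $(m,n_1,n_2,n_L)$ for $(\mu,\nu_{P_1},\nu_{P_2},\nu_L)$ and following the factorization of Remark~\ref{remark:Mella-vs-Manin} literally (so composing with $\tau_{P_1}$ first), Lemma~\ref{lemma:point-action-2-points} gives $(3m-2n_1,\,4m-3n_1,\,m-n_1+n_L,\,m-n_1+n_2)$, then Lemma~\ref{lemma:line-2-points-action} gives $(11m-6n_1-4n_2,\,7m-4n_1-3n_2+n_L,\,10m-6n_1-3n_2,\,13m-7n_1-5n_2)$, and a final application of Lemma~\ref{lemma:point-action-2-points} at $P_2$ gives exactly $(13m-6n_1-6n_2,\,14m-7n_1-6n_2,\,14m-6n_1-7n_2,\,8m-4n_1-4n_2+n_L)$, a matrix which indeed squares to the identity. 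Three points you should make explicit when writing this up: first, your stated order (composing with $\tau_{P_2}$ first) corresponds to the factorization $\tau_{P_1P_2}=\tau_{P_2}\circ\tau_L\circ\tau_{P_1}$ rather than the one you cite; it is equally valid, since all maps involved are involutions so the relation of Remark~\ref{remark:Mella-vs-Manin} can be inverted, and by the $P_1\leftrightarrow P_2$ symmetry of the formulas both orders give the same answer. Second, Remark~\ref{remark:Mella-vs-Manin} is stated in the paper without proof, so your argument should include its one-line fiberwise verification: on a general fiber the three factors act as $x\mapsto -E_2-x$, $x\mapsto (E_1+E_2)-x$, $x\mapsto -E_1-x$, whose composite $x\mapsto -2(E_1+E_2)-x$ is the reflection in the section $-(E_1+E_2)$ arising from $L$, i.e.\ $\tau_{P_1P_2}$ by definition. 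Third, since this lemma lives in the non-closed-field section, the factors $\tau_{P_1},\tau_{P_2}$ need not be defined over $\k$ when $P_1,P_2$ are conjugate; this is harmless because degrees and multiplicities are computed after base change to $\ov{\k}$, but it deserves a sentence. As for what each approach buys: the paper's proof is self-contained and uniform with the neighbouring lemmas, while yours replaces a repetition of the intersection-theoretic computation by pure bookkeeping on already-proved lemmas, at the modest price of proving the group-law identity and keeping track of the hypotheses (the assumption that $L$ is not an Eckardt line is exactly what makes the cited lemmas applicable at every intermediate step).
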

\begin{proof}
Analogous to that of Lemma~\ref{lemma:line-1-point-action}.
Note that Remark~\ref{remark:codim-2} is also applicable in this case.
\end{proof}

Lemma~\ref{lemma:2-points-action} implies 
that a point $\{P_1, P_2\}$ of degree $2$ is a non-canonical 
center with respect to some normalized mobile linear system provided that 
the corresponding line $L$ is contained in $X$ and is not an Eckardt line.
In this case the involution $\tau_{P_1P_2}$  
is an untwisting involution for this center 
(again by Lemma~\ref{lemma:2-points-action}).
On the other hand, by Lemma~\ref{lemma:2-points} the point $\{P_1, P_2\}$ 
cannot be a maximal center
if $L$ is not contained in $X$, nor, 
by Lemma~\ref{lemma:2-points-on-Eckardt-line},
if $L$ is an Eckardt line.
Finally, Corollary~\ref{corollary:3-points} applied to 
$X_{\ov{\k}}$ implies the following.\footnote{
One can avoid using Corollary~\ref{corollary:3-points} here since 
this case fits in the setting of either Example~\ref{example:3-pt-3-pt}
or Example~\ref{example:3-pt-0-pt}.}

\begin{corollary}
A $\k$-point of degree $d\ge 3$ cannot be a non-canonical center.
\end{corollary}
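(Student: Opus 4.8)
The plan is to rule out, for a closed-point $\{P_1,\dots,P_d\}$ of degree $d\ge3$, the possibility that it is a non-canonical center by passing to the geometric picture over $\ov{\k}$. If the $\k$-point is a non-canonical center, then each of its geometric components $P_i\in X_{\ov{\k}}$ is a non-canonical center as well, since non-canonicity of the normalized pair is preserved under the base change to $\ov{\k}$ and the Galois group permutes the $P_i$, forcing them all to have the same (necessarily $>\mu$) multiplicity with respect to $\frac{1}{\mu}\H$. Thus I would reduce to producing at least three distinct singular points of $X_{\ov{\k}}$ that are simultaneously non-canonical centers.

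First I would observe that each geometric point $P_i$ is a singular point of $X_{\ov{\k}}$: indeed, by Theorem~\ref{theorem:Mella} every non-canonical center on $X_{\ov{\k}}$ is either a line or a singular point, and a point of $X$ of degree $d\ge3$ decomposes into $d$ distinct geometric points, each of which must be a non-canonical center of the stated type, hence a singular point. Since $d\ge3$, this yields at least three distinct singular points $P_1$, $P_2$, $P_3$ that are non-canonical centers simultaneously. Now I would simply invoke Corollary~\ref{corollary:3-points}, applied to $X_{\ov{\k}}$, which states precisely that three points cannot appear simultaneously as non-canonical centers. This contradiction completes the argument.

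The one subtlety worth checking is the base-change compatibility: non-canonical centers and multiplicities are defined via discrepancies of extremal contractions (equivalently, via the condition $\mult_{P_i}\H>\mu$ by the remark following Proposition~\ref{proposition:2-centers}), and these numerical invariants are insensitive to extension of the ground field. So the hypothesis that $\{P_1,\dots,P_d\}$ is a non-canonical center over $\k$ translates directly into $\mult_{P_i}\ov{\H}>\mu$ for each geometric component over $\ov{\k}$, which is exactly the input Corollary~\ref{corollary:3-points} consumes. This transition is the only place where the non-closedness of $\k$ intervenes, and it is routine.

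The main obstacle is conceptual rather than computational: one must be careful that the gap flagged in the introduction to this section\t namely that the three-conjugate-point case was erroneously reduced to the two-point case\t is genuinely circumvented here. The footnote already notes that this configuration falls into the setting of Example~\ref{example:3-pt-3-pt} or Example~\ref{example:3-pt-0-pt}, so the geometric exclusion via the $E_6^{(1)}$ (resp.\ $D_4$) dual graph and Lemma~\ref{lemma:negative-definite} is available independently of the two-point analysis; invoking Corollary~\ref{corollary:3-points} is the cleanest route, but the footnote reassures us that the underlying semidefiniteness argument does not secretly rely on the faulty two-point reduction.
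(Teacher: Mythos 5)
Your proof is correct and is essentially the paper's own argument: the paper also deduces the corollary by passing to $X_{\ov{\k}}$, noting that the $d\ge 3$ Galois-conjugate geometric components would all be non-canonical centers simultaneously, and then invoking Corollary~\ref{corollary:3-points}. Your additional remarks on base-change compatibility and on the footnote's alternative route via Examples~\ref{example:3-pt-0-pt} and~\ref{example:3-pt-3-pt} simply make explicit what the paper leaves implicit.
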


So the main statements of~\cite{Mella} 
(i.\,e. Theorem~\ref{theorem:Mella-rigid})
really hold over $\k$. Moreover, the involutions $\tau_{P_1P_2}$ described 
in Example~\ref{example:2-points-involution} are needed only in the proof,
while one does not need to add them to the set of generators since they are 
expressible in terms of the involutions centered in lines and points 
by Remark~\ref{remark:Mella-vs-Manin}.

\thebibliography{XXX}

\bibitem{AVGZ}
V.\,I.\,Arnold, S.\,M.\,Gusein-Zade, A.\,N.\,Varchenko,
\emph{Singularities of differentiable maps. Vol. I. The classification of 
critical points, caustics and wave fronts.}
Monographs in Mathematics, 82. Birkh\"auser 
Boston, Inc., Boston, MA, 1985. xi+382 pp.

\bibitem{Cheltsov-survey}
I.\,A.\,Cheltsov, \emph{Birationally rigid Fano varieties},
Uspekhi Mat. Nauk, 2005, \textbf{60}, 5 (365), 71--160;
English transl.: Russian Mathematical Surveys, 2005, \textbf{60}, 5,
875--965.

\bibitem{Cheltsov-quartic}
I.\,Cheltsov, \emph{Non-rational nodal quartic threefolds},
Pacific J. of Math., \textbf{226} (2006), 1, 65--82.

\bibitem{Cheltsov-points}
I.\,Cheltsov, \emph{Points in projective spaces and applications},
arXiv:math.AG/0511578 (2006).

\bibitem{CheltsovGrinenko}
I.\,Cheltsov, M.\,Grinenko, \emph{Birational rigidity is not an open property},
arXiv:math/0612159 [math.AG] (2006).

\bibitem{CheltsovPark-Eckardt}
I.\,Cheltsov, J.\,Park, \emph{Global log-canonical thresholds and generalized 
Eckardt points}; 
Mat. Sb., 2002, \textbf{193}, 5, 149--160
English transl.: Sb. Math., 2002, \textbf{193}, 5, 779--789. 

\bibitem{CheltsovPark-Halphen}
I.\,Cheltsov, J.\,Park, \emph{Halphen pencils on weighted Fano
threefolds}, arXiv:math.AG/0607776 (2006).

\bibitem{CheltsovPark}
I.\,Cheltsov, J.\,Park, \emph{Sextic double solids},
arXiv:math.AG/0404452 (2004).

\bibitem{CheltsovPark-weighted}
I.\,Cheltsov, J.\,Park, \emph{Weighted Fano threefold hypersurfaces},
J. Reine Angew. Math., \textbf{600} (2006), 81--116.

\bibitem{Corti}
A.\,Corti, \emph{Singularities of linear systems and 3-fold
birational geometry}, L.M.S. Lecture Note Series \textbf{281}
(2000), 259--312.

\bibitem{CortiMella}
A.\,Corti, M.\,Mella, \emph{Birational geometry of terminal quartic 
3-folds, I}, Amer. J. Math., \textbf{126} (2004), 739--761.

\bibitem{CPR}
A.\,Corti, A.\,Pukhlikov, M.\,Reid,
\emph{Fano 3-fold hypersurfaces},
Corti, Alessio (ed.) et al., Explicit birational geometry of 3-folds. 
Cambridge: Cambridge University Press. Lond. Math. Soc. Lect. Note Ser. 
281, 175-258 (2000). 

\bibitem{Cynk}
S.\,Cynk, \emph{Defect of a nodal hypersurface}, Manuscripta Math.
\textbf{104} (2001), 325--331.

\bibitem{Danilov}
V.\,I.\,Danilov,
\emph{Cohomology of algebraic varieties.} 
(English. Russian original)
Algebraic geometry II. Encycl. Math. Sci. 35, 1--126 (1996); 
translation from Itogi Nauki Tekh., Ser. Sovrem. Probl. Mat., 
Fundam. Napravleniya 35, 5--130 (1989).

\bibitem{EisenbudKoh}
D.\,Eisenbud, J.-H.\,Koh, \emph{Remarks on points in a projective space},
Commutative algebra, Berkeley, CA (1987), MSRI Publications \textbf{15},
Springer, New York, 157--172.

\bibitem{Grinenko-konus}
M.\,M.\,Grinenko, \emph{Birational automorphisms of a $3$-dimensional
double cone}, Mat. Sb., 1998, \textbf{189}, 7, 37--52; English transl.:
Sb. Math., 1998, \textbf{189}, 991--1007.

\bibitem{Grinenko}
M.\,M.\,Grinenko, \emph{Birational automorphisms of a three-dimensional
double quadric
with an elementary singularity}, Mat. Sb., 1998, \textbf{189}, 1, 
101--118;  
English transl.: Sb. Math., 1998, \textbf{189}, 97--114.

\bibitem{Iskovskikh-classification}
V.\,A.\,Iskovskikh, \emph{Anticanonical models of three-dimensional
algebraic varieties}, Itogi Nauki Tekh. Sovrem. Probl. Mat., vol. 12, 
Moscow,
VINITI, 1979, 59--157; English transl.:  J.~Soviet Math., \textbf{13}
(1980), 745--814.

\bibitem{Iskovskikh-rigid}
V.\,A.\,Iskovskikh, \emph{Birational automorphisms of three-dimensional
algebraic varieties}, Itogi Nauki Tekh. Sovrem. Probl. Mat., vol. 12, 
Moscow,
VINITI, 1979, 159--235; English transl.:  J.~Soviet Math., \textbf{13}
(1980), 815--867.

\bibitem{IskovskikhManin}
V.\,A.\,Iskovskikh, Yu.\,I.\,Manin,
\emph{Three-dimensional quartics and counterexamples to the L\"uroth
problem}, Mat. Sb., 1971, \textbf{86}, 1, 140--166; English transl.:
Math. USSR-Sb., 1971, \textbf{15}, 1, 141--166.

\bibitem{IskovskikhPukhlikov}
V.\,A.\,Iskovskikh, A.\,V.\,Pukhlikov,
\emph{Birational automorphisms of multidimensional algebraic varieties},
Itogi Nauki Tekh. Sovrem. Probl. Mat., vol. 19, Moscow,
VINITI, 2001, 5--139.

\bibitem{Kac}
V.\,Kac, \emph{Infinite-Dimensional Lie Algebras}, 
Cambridge University Press, Cambridge, 1990. xxii+400 pp. 

\bibitem{Kaloghiros}
A.-S.\,Kaloghiros, \emph{The topology of terminal quartic 3-folds},
arXiv:0707.1852 [math.AG] (2007).

\bibitem{Kollar}
J.\,Koll\'ar, \emph{Singularities of pairs}. Algebraic geometry. 
Proceedings of the Summer Research Institute, Santa Cruz, CA, USA, 
July 9--29, 1995. Providence, RI: American Mathematical Society. 
Proc. Symp. Pure Math. 62 (pt.1), 221--287 (1997).

\bibitem{Manin}
Yu.\,I.\,Manin, \emph{Cubic forms: algebra, geometry, arithmetic.}
North-Holland Publishing Co., Amsterdam, 1974.

\bibitem{Matsuki}
K.\,Matsuki. \emph{Introduction to the Mori program.}
Universitext, Springer, 2002.

\bibitem{Mella}
M.\,Mella, \emph{Birational geometry of quartic 3-folds II: the importance
of being $\Q$-factorial}, Math. Ann. \textbf {330} (2004), 107--126.

\bibitem{Pukhlikov-quartic}
A.\,V.\,Pukhlikov, \emph{Birational automorphisms of three-dimensional
quartic with an elementary singularity}, Mat. Sb., 1988, \textbf{135}, 4,
472--496; English transl.: Math. USSR-Sb., 1989, \textbf{63}, 457--482.  

\bibitem{Pukhlikov-essentials}
A.\,Pukhlikov, \emph{Essentials of the method of maximal singularities},
L.M.S. Lecture Note Series \textbf{281} (2000), 73--100.

\bibitem{Shramov}
C.\,A.\,Shramov, \emph{$\Q$-factorial quartic threefolds},
Mat. Sb., 2007, \textbf{198}, 8,
103--114; English transl.: Sb. Math., 2007, \textbf{198}, 1165--1174.

\end{document}